\theoremstyle{plain}
\newenvironment{subproof}[1][\proofname]{%
  \begin{proof}[#1]%
}{%
  \end{proof}%
}
\newcommand\ol\overline
\newcommand\dang\measuredangle
\newtheorem{theorem}{Theorem}[section]
\newtheorem{lemma}[theorem]{Lemma}
\newtheorem{corollary}[theorem]{Corollary}
\theoremstyle{definition}
\newtheorem{definition}[theorem]{Definition}
\theoremstyle{remark}
\numberwithin{equation}{section}
\begin{document}

\title{Constructions in the Locus of Isogonal Conjugates in a Quadrilateral}

\author{Daniel Hu}
\address{Los Altos High School, Los Altos, CA}
\email{daniel.b.hu@gmail.com}

\maketitle

\begin{abstract}
    Given fixed distinct points $A, B, C, D$, we examine properties of the locus of points $X$ for which $(XA, XC)$, $(XB, XD)$ are isogonal. This locus is a cubic curve circumscribing $ABCD$. We characterize all possible such cubics $\mathcal C \in \mathbb R^2$. These properties allow us to present constructions involving these cubics, such as intersections and tangent lines, using straightedge and compass.
\end{abstract}

\section{Introduction}

In this paper, we characterize the locus of all points $P$ with an isogonal conjugate in a given quadrilateral $ABCD$. This turns out to be a cubic plane curve, which we will call the \textit{isogonal cubic} of $ABCD$. The isogonal cubic is a well-established figure in geometry. However, its properties are often considered with respect to the base quadrilateral $ABCD$, without considering the isogonal cubic as an individual curve, and often neglecting degenerate cases of $ABCD$.

The first half of the paper is dedicated to discovering geometric properties of isogonal cubics, and also providing constructions on the isogonal cubic with straightedge and compass. This sets up the second half, which characterizes all possible non-degenerate cubics $\mathcal C \in \mathbb{RP}^2$ such that there exist $A, B, C, D \in \mathbb R^2$ for which $\mathcal C$ is the isogonal cubic of $ABCD$. We also establish the notion of the spiral center and isogonal conjugation purely with respect to a valid cubic $\mathcal C$. This formalizes constructions on the isogonal cubic with straightedge and compass, only requiring the cubic's unique spiral center and asymptote, without the base quadrilateral $ABCD$. In particular, these constructions do not require intersecting lines with cubic curves. This makes them compatible with software such as Geogebra, where intersecting lines with cubics is not always supported.

The following is the main result we prove in this paper, which underlies these constructions:

\begin{theorem}
    Let $\mathcal C$ be a non-degenerate cubic in $\mathbb R^2$, and let $\mathcal C_0$ denote its embedding in $\mathbb{CP}^2$. Then the following two conditions are equivalent:
    \begin{enumerate}
        \item[(1)] There exist distinct $A, B, C, D \in \mathcal C$ such that $\mathcal C$ is the isogonal cubic of $ABCD$.
        \item[(2)] The circular points at infinity (\cite{ref:Circle}) $I, J$ lie on $\mathcal C_0$, and the tangents at $I, J$ meet on $\mathcal C_0$.
    \end{enumerate}
\end{theorem}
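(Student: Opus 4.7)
The plan is to prove the two implications separately, relying on the constructions of isogonal cubics developed in the first half of the paper.

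For direction (1) $\Rightarrow$ (2): I represent points in $\mathbb R^2$ as complex numbers and rewrite the isogonal condition on $X$ algebraically. The condition that $\{XA, XC\}$ and $\{XB, XD\}$ share angle bisectors is equivalent to the two pairs being harmonic conjugate with respect to the isotropic lines $\{XI, XJ\}$ through $X$, which is the characterization developed earlier. Clearing denominators yields the equation of the isogonal cubic $\mathcal C$. Homogenizing and substituting $(1 : \pm i : 0)$ directly verifies $I, J \in \mathcal C_0$. I then compute the tangent lines at these points as the linear parts of the cubic's equation, observe they are complex conjugate (so they meet at a real point $S$), and identify $S$ with the spiral center of $ABCD$ under the pairing induced by the isogonal condition. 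The fact that this spiral center lies on $\mathcal C$ is already established earlier in the paper.

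For direction (2) $\Rightarrow$ (1): let $S$ be the real intersection of the complex-conjugate tangents to $\mathcal C_0$ at $I$ and $J$; by hypothesis $S \in \mathcal C$. A dimension count motivates the approach: the set of cubics satisfying (2) is $6$-dimensional ($9$ coefficients up to scaling, minus $2$ for containing $I, J$, minus $1$ for the tangent-intersection condition), while the map sending $(A, B, C, D) \in (\mathbb R^2)^4$ to the isogonal cubic of $ABCD$ goes from $8$ dimensions into this $6$-dimensional family and should be generically surjective. To produce a concrete quadrilateral, I pick two generic lines through $S$, each meeting $\mathcal C$ in two further points beyond $S$; take these intersection points as $\{A, C\}$ and $\{B, D\}$. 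Both $\mathcal C$ and the isogonal cubic $\mathcal C'$ of $ABCD$ then satisfy nine matching conditions: passage through $A, B, C, D, S$ (five), passage through $I, J$ (two), and matching tangent directions at $I, J$ (two). Since a cubic is determined by nine linearly independent linear conditions on its coefficients, $\mathcal C' = \mathcal C$.

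The expected main obstacle is verifying that the spiral center of the constructed $ABCD$ equals the prescribed $S$, so that $\mathcal C'$ has the same tangent lines at $I, J$ as $\mathcal C$; without this, the nine-condition argument breaks down. This requires unpacking how the choice of chords through $S$ encodes the relevant spiral similarity pairing on $ABCD$, and I expect this to depend on the explicit characterizations of spiral centers and isogonal conjugation developed in the first half of the paper. A secondary, more routine obstacle is ruling out degenerate configurations where the nine linear conditions fail to be independent, which I expect to handle by a small genericity argument on the initial choice of chords through $S$.
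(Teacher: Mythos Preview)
Your direction $(1)\Rightarrow(2)$ is workable but different from the paper. The paper argues geometrically: with $X = AI \cap CJ$ and $Y = AJ \cap CI$, the pair $(XA,XC)$ coincides with $(XI,XJ)$, so $X$ (and likewise $Y$) lies on $\mathcal C$; a Cayley--Bacharach lemma applied to the complete quadrilateral on $A,I,C,J$ then forces $II\cap JJ\in\mathcal C_0$. Your computational route can succeed, but be careful: the identification ``$S=$ spiral center of $ABCD$'' is \emph{not} established in the first half of the paper---it appears only after the present theorem, and that later proof in fact invokes the $(2)\Rightarrow(1)$ direction. So you must carry out that identification by direct calculation, not by citation.

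Your direction $(2)\Rightarrow(1)$ has a real gap, and it is precisely the obstacle you flagged but did not resolve. Taking $\{A,C\}$ and $\{B,D\}$ as the residual intersections of two chords through $S$ gives $S=AC\cap BD$. But the spiral center of $ABCD$ is the Miquel point of the complete quadrilateral (the center of the spiral similarity $A\mapsto C$, $B\mapsto D$), and this is \emph{not} the diagonal point $AC\cap BD$ in general. In group-law terms on $\mathcal C$: isogonal conjugation on an isogonal cubic is translation by a fixed $2$-torsion element $t$, so ``$A,C$ isogonal conjugates'' means $C-A=t$, whereas ``$A,S,C$ collinear'' only fixes $A+C$; a generic chord through $S$ will not produce an isogonal-conjugate pair. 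Hence the tangents to your $\mathcal C'$ at $I,J$ meet at some $P'\neq S$, the two tangent conditions fail, and you are left with only the seven shared incidences $A,B,C,D,S,I,J$---not enough to force $\mathcal C'=\mathcal C$. The characterizations from the first half of the paper cannot rescue this, since they all presuppose that $\mathcal C$ is already an isogonal cubic, which is exactly what you are trying to establish.

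The paper's construction is essentially different: it draws chords through $I$ and $J$, not through $S$. From a real point $A\in\mathcal C$ one sets $B=IA\cap\mathcal C_0$, $D=JA\cap\mathcal C_0$, and the Cayley--Bacharach lemma (using $II\cap JJ\in\mathcal C_0$) forces $C=ID\cap JB\in\mathcal C_0$; now $I=AB\cap CD$ and $J=AD\cap BC$, so $(PA,PC),(PB,PD),(PI,PJ)$ are in Desargues involution for \emph{every} $P$. Repeating with a second seed $A'$ and invoking the locus-of-involution lemma pins $\mathcal C_0$ down as the isogonal cubic of $AA'CC'$. A separate argument (complex conjugation on coordinates) then shows the constructed vertices are real. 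The moral: the correct pivot for the chords is $I$ and $J$, not $S$.
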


In particular, the assertion that (2) is a \textit{sufficient} condition requires special care in $\mathbb R^2$ and $\mathbb{CP}^2$.

\subsection{Definitions and Conventions}

\begin{definition}[Isogonality in $\mathbb{RP}^2$]
    For points $P, A, B, C, D \in \mathbb R^2$, pairs of lines $(PA, PC), (PB, PD)$ are called \textit{isogonal} if they share the same pair of angle bisectors. If $P$ is a real point at infinity while $A, B, C, D$ remain in $\mathbb R^2$, we slightly modify our definition of isogonality to mean that for any line $\ell$ intersecting $PA, PB, PC, PD$ at points $E, F, G, H \in \mathbb R^2$, directed lengths $EF$ and $HG$ will be equal.
\end{definition}

The definition for points at infinity is equivalent to the midline of parallel lines $PA, PC$ being the same as the midline of parallel lines $PB, PD$, which complies with the idea of angles as a conceptual measure of ``distance" between two lines.

\begin{definition}[Quadrilateral Conventions and Isogonal Conjugates]
    We use the term ``quadrilateral" throughout this paper to refer to possibly self-intersecting quadrilaterals, whose vertices are distinct but possibly collinear. Points $P, Q$ are called \textit{isogonal conjugates} in a quadrilateral $ABCD$ if and only if $(AP, AQ), (AB, AD)$ are isogonal and the analogous holds for the other vertices.
\end{definition}

We will exclusively work in directed angles. For points $X, Y$, the notation $XY$ denotes the line $XY$ if $X$ and $Y$ are distinct, while $XY$ denotes the tangent at $X$ if $X \equiv Y$ and the context of the curve containing $X$ is clear (usually the isogonal cubic). The notation $(XYZ)$ denotes the circumcircle of $XYZ$ provided $X, Y, Z$ are distinct.

\begin{definition}[Notation for Intersection]
    We will let $\mathcal S \cap \mathcal T$ denote the intersection of sets of points $\mathcal S$ and $\mathcal T$, which is unique when $\mathcal S$ and $\mathcal T$ are distinct lines. When $\mathcal S$ is a cubic and $\mathcal T$ is a line $XY$ such that $X, Y \in \mathcal S$, we will use the notation $XY \cap \mathcal S$ to denote
    \begin{itemize}
        \item If either $X$ or $Y$ is a singular point, whichever one of $X, Y$ is singular
        \item If $X, Y$ are distinct and $XY$ is not tangent to $\mathcal S$, the third intersection of $XY$ with $\mathcal S$
        \item If $X, Y$ are distinct and $XY$ is tangent to $\mathcal S$, the tangency point of $XY$ with $\mathcal S$
        \item If $X, Y$ are not distinct and $X$ is not an inflection point of $\mathcal S$, the intersection of the tangent to $\mathcal S$ at $X$ with $\mathcal S$
        \item If $X, Y$ are not distinct and $X$ is an inflection point, the point $X$
    \end{itemize}
\end{definition}

These are essentially equivalent to the third intersection of $XY$ with $\mathcal C$ counting multiplicity.

We begin with this well-known characterization of all points with isogonal conjugates:

\begin{theorem}
    For fixed distinct points $A, B, C, D \in \mathbb R^2$ not all collinear, a point $P \in \mathbb{RP}^2$ is called \textit{excellent} if $(PA, PC)$ and $(PB, PD)$ are isogonal. Then $P$ is excellent if and only if it has an isogonal conjugate in $ABCD$.
\end{theorem}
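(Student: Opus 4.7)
This equivalence is classical. The key ingredient is the directed-angle identity
\[\dang(PA, PB) - \dang(PD, PC) = \dang(QD, QA) + \dang(QB, QC),\]
which holds whenever $Q$ is the isogonal conjugate of $P$ in $ABCD$. One obtains it by expanding $\dang(PA, PB)$ and $\dang(PD, PC)$ via the angle sums in triangles $APB$ and $DPC$, substituting the vertex isogonality conditions $(AP, AB) = (AD, AQ)$, $(BP, BA) = (BC, BQ)$, and their analogues at $C, D$, and then simplifying using the angle sums in triangles $ADQ$ and $BCQ$. The left-hand side vanishes iff $P$ is excellent, and the right-hand side vanishes iff $Q$ is, so this identity establishes the equivalence \emph{$P$ excellent $\iff Q$ excellent}.

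This symmetric equivalence alone does not force excellence of $P$. To finish the direction ``$P$ has an isogonal conjugate $\Rightarrow P$ is excellent,'' one compares the isogonal-conjugate locus $\mathcal C_{\text{ic}}$ with the excellent locus $\mathcal C_{\text{ex}}$: both are cubic curves in $\mathbb R^2$ (the former coming from the concurrency condition on the four reflected lines $\ell_V$, the latter from the equation $\Phi(p) := (p-a)(p-c)/[(p-b)(p-d)] \in \mathbb R$ in complex coordinates). Both contain $A, B, C, D$, and, after complexification, both pass through the circular points at infinity together with common structural points such as the spiral centers fixed by the involution $P \leftrightarrow Q$. Bezout's theorem then forces $\mathcal C_{\text{ic}} = \mathcal C_{\text{ex}}$.

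For the converse direction ($P$ excellent $\Rightarrow Q$ exists), define $Q := \ell_A \cap \ell_B$. The same identity, read in reverse, forces $Q$ onto $\ell_C$ and $\ell_D$ as well, so $Q$ is a genuine isogonal conjugate of $P$ in $ABCD$.

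The main obstacle is the cubic-comparison step: the angle-chase identity is symmetric in $P$ and $Q$, so on its own it cannot distinguish ``both $P$ and $Q$ excellent'' from ``neither is''; only the auxiliary Bezout or dimension-theoretic argument identifying $\mathcal C_{\text{ic}}$ with $\mathcal C_{\text{ex}}$ closes this gap.
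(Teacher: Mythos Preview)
Your proposal has two genuine gaps.

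First, the Bezout step is the crux of the argument and is not justified. You assert that the isogonal-conjugate locus $\mathcal C_{\text{ic}}$ is a cubic, but the concurrency of the four reflected lines $\ell_A,\ell_B,\ell_C,\ell_D$ is a priori a codimension-two condition on $P$ (two incidences: $\ell_A\cap\ell_B\in\ell_C$ and $\ell_A\cap\ell_B\in\ell_D$), not codimension one. That it cuts out a curve at all is essentially the content of the theorem you are trying to prove. Even granting that both loci are cubics, you name at most eight common points ($A,B,C,D$, the two circular points, and unspecified ``spiral centers''), which does not exceed the Bezout bound of nine; nor do you verify irreducibility of either curve. So nothing here forces $\mathcal C_{\text{ic}}=\mathcal C_{\text{ex}}$.

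Second, the converse direction is circular. Your displayed identity is derived under the hypothesis that $Q$ satisfies the isogonality condition at \emph{all four} vertices. If you only set $Q:=\ell_A\cap\ell_B$, then only two of the four vertex substitutions are available, and the identity is no longer at your disposal; you cannot ``read it in reverse'' to place $Q$ on $\ell_C$ and $\ell_D$.

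The paper avoids both pitfalls by working synthetically with pedal circles. It first treats the degenerate case where three of $A,B,C,D$ are collinear via a direct angle chase in the shared pedal circle of isogonal conjugates in a triangle. In the generic case it proves the lemma that the feet of the perpendiculars from $P$ to the four sidelines are concyclic iff $\dang APB=\dang DPC$, and then constructs $Q$ explicitly as the point whose pedal quadrilateral is inscribed in that same circle, using the triangle version of the shared-pedal-circle theorem twice (once in $\triangle YAB$ and once in $\triangle XAD$, where $X,Y$ are diagonal points of the complete quadrilateral). This builds $Q$ concretely rather than relying on a locus comparison.
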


Most proofs for this fact do not address the case when three of $A, B, C, D$ are collinear, so we will provide the full proof of this lemma for the sake of rigor.

\begin{proof}
    The first case is when, without loss of generality, $B, C, D$ are collinear. In this case, we need to prove the following: For triangle $ABC$ and $D \in BC$ and point $P$, the isogonal conjugate $Q$ of $P$ satisfies that $BC$ is a bisector of angle $\angle PDQ$ if and only if $(PA, PD)$, $(PB, PC)$ are isogonal.
    
    This is, in turn, equivalent to the following: For isogonal conjugates $P, Q$ in $ABC$, if $Q_A'$ be the reflection of $Q$ over $BC$, then $(PA, PQ_A')$, $(PB, PC)$ are collinear. To prove this, let $P, Q$ have pedal triangles $P_AP_BP_C$, $Q_AQ_BQ_C$ respectively; by \cite{ref:Three}, these share the same circumcircle $\omega$ centered at the midpoint of $PQ$. Let $PP_A$ meet $\omega$ at $R_A \ne P_A$; $PR_AQ_AQ_A'$ is a parallelogram, so
    \begin{align*}
        \angle Q_A'PC &= \angle Q_A'PP_A + \angle P_APC \\
        &= \angle Q_AR_AP_A + 90^\circ - \angle PCB \\
        &= \angle Q_AP_CP_A + 90^\circ - \angle PCB \\
        &= \angle Q_AP_CP_B + \angle BP_CP_A + 90^\circ - \angle PCB \\
        &= \angle Q_AQ_BQ_C + \angle BPP_A + 90^\circ - \angle PCB \\
        &= \angle Q_AQ_BQ + \angle QQ_BQ_C + 90^\circ - \angle CBP + 90^\circ - \angle PCB \\
        &= \angle BCQ + \angle QAB - \angle CBP - \angle PCB \\
        &= \angle PCA + \angle CAP + \angle BPC \\
        &= \angle BPC + \angle CPA \\
        &= \angle BPA
    \end{align*}
    as desired.
    
    \begin{figure}[!htbp]\centering
        \includegraphics[width=200pt]{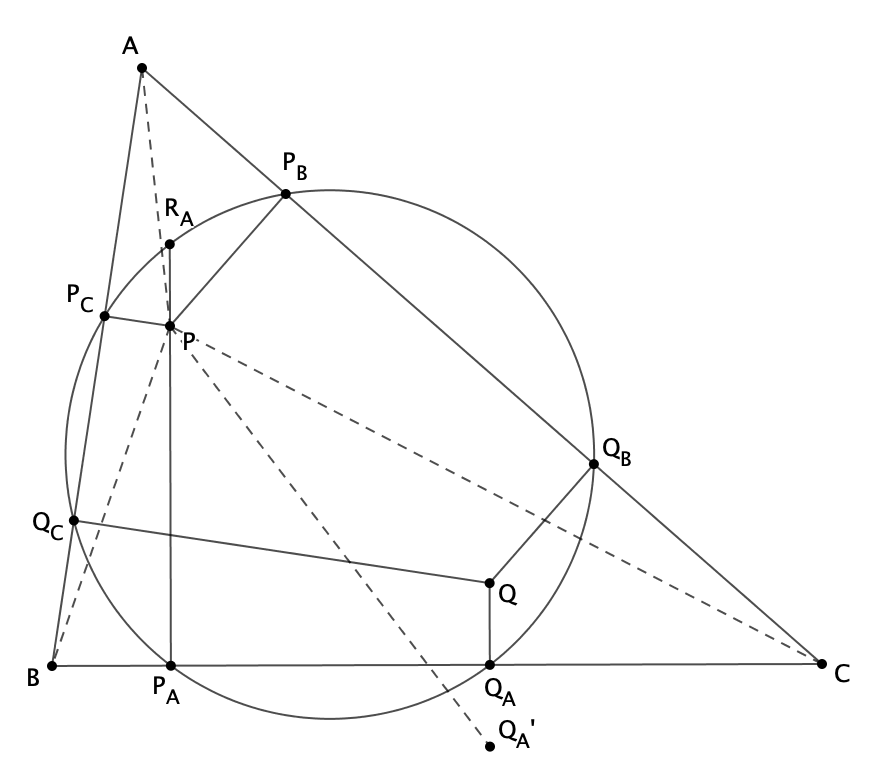}
        \caption{The Origin Lemma}
        \label{fig:0.1}
    \end{figure}
    
    Next, we prove this fact when no three of $A, B, C, D$ are collinear. While the proof for the general case is well-known, we will provide it for the sake of completion.
    \newline\newline
    \textbf{Lemma:} For quadrilateral $ABCD$ and point $P$, let the projections of $P$ onto $AB, BC, CD, DA$ be $E, F, G, H$. Prove that $EFGH$ is cyclic if and only if $\angle APB = \angle DPC$.

    \begin{subproof}
        With the various cyclic quadrilaterals,
        \begin{align*}
            \angle EFG + \angle GHE &= \angle EFP + \angle PFG + \angle GHP + \angle PHE \\
            &= \angle EBP + \angle PCG + \angle GDP + \angle PAE \\
            &= \angle APB + \angle CPD,
        \end{align*}
        which directly implies the desired statement.
    \end{subproof}

    Back to the main problem, drop perpendiculars $E, F, G, H$ from $P$ to $AB, BC, CD, DA$. First, we prove that if isogonal conjugate then $\angle APB = \angle DPC$. Let $P$ have isogonal conjugate $P'$ in $ABCD$. Then $P'$ is the isogonal conjugate of $P$ in both $YAB$ and $XAD$. Drop from $P'$ perpendiculars $E', F', G', H'$; by \cite{ref:Three} on $YAB$, $EFHE'F'H'$ is cyclic, and on $XAD$ we get $EGHE'G'H'$ is cyclic. In other words, $F, F', G, G'$ all lie on $(EE'HH')$, implying that $EFGH$ is cyclic, hence $\angle APB = \angle DPC$ as desired.
    
    Now, we prove that if $\angle APB = \angle DPC$ then it has an isogonal conjugate $P'$. Then $EFGH$ is cyclic; let its circumcircle meet $AB, BC, CD, DA$ at $E', F', G', H'$. By \cite{ref:Three}, the perpendiculars to $AB, BC, CD$ at $E', F', G'$ concur at a single point $P'$, the isogonal conjugate of $P$ in $XBC$. Analogously, the perpendiculars to $AB, CD, DA$ at $E', G', H'$ concur at the isogonal conjugate of $P$ in $XAD$. In other words, $P'H' \perp DA$ and is the isogonal conjugate of $P$ in both $XAD$ and $XBC$, implying that $P'$ is the desired isogonal conjugate of $P$ in $ABCD$.
\end{proof}

\subsection{Degenerate Cases}

One case where the locus of isogonal conjugates becomes degenerate is when $A, B, C, D$ are collinear on a line $\ell$. In this case, the locus becomes the line $\ell$ along with the circle centered on $\ell$ whose inversion swaps $A$ with $C$ and $B$ with $D$, if this circle exists.

Another case is when $ABCD$ is a parallelogram, where we have the following characterization:

\begin{theorem}
    If $ABCD$ is a parallelogram, the locus of excellent points is the line of infinity, along with the circumhyperbola passing through the points of infinity on the two angle bisectors of $\angle ABC$.
\end{theorem}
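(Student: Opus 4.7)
The plan is to place the center of the parallelogram at the origin of $\mathbb{C}\cong\mathbb{R}^{2}$, so that $C=-A$ and $D=-B$. As the equal-bisector condition at a finite point $P$ is equivalent (via directed angles) to $\angle(PA,PB)=\angle(PD,PC)$, $P$ is excellent if and only if
\[
\frac{(B-P)(D-P)}{(A-P)(C-P)}\in\mathbb{R}.
\]
The central symmetry collapses this dramatically to
\[
\frac{B^{2}-P^{2}}{A^{2}-P^{2}}\in\mathbb{R},
\]
which says $P^{2}$ lies on the line $\ell\subset\mathbb{C}$ through $A^{2}$ and $B^{2}$.

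Since $z\mapsto z^{2}$ has degree $2$, the preimage of $\ell$ is a conic. Setting $\alpha:=B^{2}-A^{2}$ and $P=x+iy$, the condition $\operatorname{Im}(\bar\alpha(P^{2}-A^{2}))=0$ expands to
\[
-\operatorname{Im}(\alpha)(x^{2}-y^{2})+2\operatorname{Re}(\alpha)\,xy=\text{const}.
\]
The quadratic part has trace $0$ and determinant $-|\alpha|^{2}<0$, so the real locus is a rectangular hyperbola centered at the origin (the center of $ABCD$). Plugging in $P\in\{A,B,C,D\}$ gives $P^{2}\in\{A^{2},B^{2}\}\subset\ell$, so this hyperbola circumscribes the parallelogram. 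For its asymptotic directions: a unit $w\in\mathbb{C}$ is asymptotic iff $w^{2}\in\mathbb{R}\cdot\alpha$, while the two bisectors of $\angle ABC$ at $B$ have directions $w$ with $w^{2}\propto(A-B)\cdot(-(A+B))=B^{2}-A^{2}=\alpha$ by the standard identity that the bisector of two lines with direction vectors $u,v$ has direction $w$ satisfying $w^{2}\propto uv$. These conditions coincide, so the two points at infinity on the hyperbola are precisely the points at infinity on the bisectors of $\angle ABC$.

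It remains to handle the line at infinity. For any $P\in\mathbb{RP}^{2}\setminus\mathbb{R}^{2}$ in direction $d$, the pairs $\{PA,PC\}$ and $\{PB,PD\}$ are two pairs of parallel lines, each symmetric about the origin since $C=-A$ and $D=-B$; both midlines are therefore the line through the origin in direction $d$, which by Definition~1.2 makes $P$ excellent. Combining everything, the locus of excellent points is the rectangular hyperbola together with the line at infinity, a degenerate cubic of total degree $2+1=3$, as claimed. The delicate step is this last one, where the generalized definition of isogonality forces the full line at infinity into the locus, not just the two bisector points already picked out by the conic component.
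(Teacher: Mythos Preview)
Your proof is correct and takes a genuinely different route from the paper's. The paper argues synthetically: it invokes the Dual of Desargues' Involution to reduce the isogonality condition at a finite $P$ to the statement that the bisectors of $\angle APC$ are parallel to those of $\angle BAD$, and then identifies the locus as a circumrectangular hyperbola by recognizing it as the isogonal image (in triangle $PAC$) of the perpendicular bisector of $AC$. You instead compute directly in $\mathbb{C}$: centering the parallelogram at the origin collapses the cross-ratio criterion to $P^{2}\in\ell=\overline{A^{2}B^{2}}$, and the preimage of a line under $z\mapsto z^{2}$ is manifestly a trace-zero conic through $\pm A,\pm B$ with asymptotic directions $w^{2}\in\mathbb{R}\alpha$, which you then match to the bisectors of $\angle ABC$ via $w^{2}\propto (A-B)(C-B)=\alpha$.

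Your approach is shorter and entirely elementary, needing no projective involutions or triangle isogonal-conjugate machinery; it also makes the rectangularity and the center of the hyperbola immediate from the trace-zero quadratic form. The paper's argument, by contrast, explains \emph{why} the bisector directions appear (as isogonal images of arc midpoints), so it gives more conceptual context even though it is longer. One small remark: your final sentence about the locus being a ``degenerate cubic of total degree $2+1=3$'' is a nice tie-in to the paper's theme, though not required by the statement itself; and you might note for completeness that $\alpha=B^{2}-A^{2}=(B-A)(B+A)\ne 0$ since the vertices are distinct, so the quadratic form is genuinely nondegenerate.
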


\begin{proof}
    By our extension of isogonality to $\mathbb{RP}^2$, the line of infinity is part of this locus. Then for all points $P \in \mathbb R^2$, by the Dual of Desargues' involution theorem (\cite{ref:Elementary}, 133), $(PA, PC)$, $(PB, PD)$ are isogonal if and only if angle $APC$ has the same angle bisectors as the pair of lines through $P$ parallel to $AB$ and $AD$. Thus if $P_1$ and $P_2$ are the points of infinity along with these angle bisectors of $\angle BAD$, we essentially need to find the locus $P \in \mathbb R^2$ for which the angle bisectors of $APC$ are parallel to $\ell_1$ and $\ell_2$.
    
    We claim that this locus is the hyperbola $\mathcal H$ centered at the midpoint $M$ of $AC$ passing through $P_1, P_2, A, C$. For any point $P \in \mathcal H$, $\mathcal H$ becomes the circumrectangular hyperbola of triangle $PAC$ centered at $M$, which is the isogonal conjugate of the perpendicular bisector of $AC$ wrt $PAC$. The isogonal conjugates of $P_1, P_2$ in $PAC$ then become the two arc midpoints of $AC$ in $(PAC)$, so $P_1, P_2$ are indeed the points of infinity along the angle bisectors of $PAC$.
    
    For the other direction, take any point $P$ such that $\angle APC$ has angle bisectors passing through $P_1, P_2$. Then the isogonal conjugate $\mathcal H'$ of the perpendicular bisector of $AC$ wrt $PAC$ will also pass through $P_1, P_2$, implying that $\mathcal H \equiv \mathcal H'$, so $P \in \mathcal H$, as desired. It is now clear that $B, D \in \mathcal H$, which completes the proof.
\end{proof}

For the rest of the paper, we will assume quadrilateral $ABCD$ does not fall under either of these cases. In particular, $A, B, C, D$ are not all collinear, and the midpoints of $AC, BD$ are distinct.

\section{Preliminary Lemmas}

Up until Section 6, we will work in $\mathbb{RP}^2$. All angles are directed mod $180^\circ$.

The following provides another well-known characterization of isogonal conjugates.

\begin{definition}
    Let $P$ be the spiral center of $ABCD$. For any point $Y$, call the unique point $Y'$ for which $P$ is the spiral center of $AYCY'$ the \textit{Spiral Inverse} of $Y$.
\end{definition}

\begin{theorem}
    The spiral inverse $X'$ of a excellent point $X$ is also the isogonal conjugate of $X$.
\end{theorem}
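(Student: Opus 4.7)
The plan is a short complex-coordinate computation. Place $P$ at the origin, so that the spiral similarity at $P$ sending $A \mapsto D$ and $B \mapsto C$ is multiplication by some $s \in \mathbb{C}$, giving $d = sa$, $c = sb$, and hence $ac = bd = sab$. By definition the spiral inverse $X'$ comes from the spiral at $P$ taking $A \mapsto X'$ and $X \mapsto C$, so matching ratios yields $x' = ac/x$.

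The key algebraic observation is the substitution $y := x + x' = x + ac/x$. Since $ac = bd$, one has the clean factorizations
\[
    (x-a)(x-c) = x\bigl[y - (a+c)\bigr], \qquad (x-b)(x-d) = x\bigl[y - (b+d)\bigr].
\]
The excellent condition $(x-a)(x-c)/((x-b)(x-d)) \in \mathbb{R}$ therefore becomes $(y-(a+c))/(y-(b+d)) \in \mathbb{R}$, which is equivalent to $y$ lying on the complex line through $a+c$ and $b+d$---geometrically, the midpoint of $XX'$ lies on the Newton--Gauss line of $ABCD$.

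Next I check the isogonality condition at each vertex, starting with $A$. A direct expansion gives $(x-a)(x'-a) = -a\bigl[y - (a+c)\bigr]$ and $(b-a)(d-a) = a(b-a)(s-1) = a\bigl[(a+c) - (b+d)\bigr]$, where the last step uses the identity $(b-a)(s-1) = sb - b - sa + a = (a+c) - (b+d)$. The isogonality ratio at $A$ thus reduces to $\bigl[y - (a+c)\bigr]\bigm/\bigl[(b+d) - (a+c)\bigr]$, which is real exactly when $y$ lies on the line through $a+c$ and $b+d$---precisely the excellent hypothesis. Analogous expansions at $B$, $C$, and $D$ each yield the same condition on $y$. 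Hence $X'$ satisfies the isogonality condition at all four vertices of $ABCD$, and by uniqueness of the isogonal conjugate (whose existence is the content of the earlier theorem) we conclude that $X'$ is the isogonal conjugate of $X$.

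The main obstacle is really just bookkeeping: one must pick the correct spiral-center convention, namely $A \mapsto D$, $B \mapsto C$ (the other candidate spiral $A \mapsto C$, $B \mapsto D$ has a different center in general, and the claim fails for that one), and one must spot the substitution $y = x + x'$ that makes all four isogonality ratios collapse tautologically to the excellent condition. Once those two moves are made, everything else is mechanical.
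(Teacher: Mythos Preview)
Your proof is correct and takes a genuinely different route from the paper. The paper argues synthetically: it introduces the diagonal points $E = AD \cap BC$, $F = AB \cap CD$ and angle-chases through the spiral similarity to show that $X,X'$ are isogonal conjugates first in $\triangle CDE$, then in $\triangle BCF$ (with a separate case when three of $A,B,C,D$ are collinear). You instead place $P$ at the origin in $\mathbb{C}$, use $ac = bd$, and make the substitution $y = x + ac/x = x + x'$ to collapse both the excellent condition and each of the four vertex-isogonality conditions to the single real-ratio statement that $y$ lies on the line through $a+c$ and $b+d$.

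What this buys you: your argument is uniform (no case split for three collinear vertices), and it yields \Cref{isogonal-MN} for free, since $y/2$ is the midpoint of $XX'$ and the line through $a+c$ and $b+d$ is twice the Newton--Gauss line $MN$; the paper only reaches that fact later via inconics. The paper's approach, on the other hand, stays within the synthetic framework and ties the result visibly to the triangles $CDE$, $BCF$ of the complete quadrilateral, which feeds more directly into the projective arguments that follow. One small remark: the appeal to ``uniqueness of the isogonal conjugate'' at the end is unnecessary, since you have directly verified the defining isogonality relation at all four vertices; you may simply conclude that $X'$ \emph{is} an isogonal conjugate of $X$.
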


\begin{proof}
    Let $E = AD \cap BC, F = AB \cap CD$. If no three of $A, B, C, D$ are collinear, we have no problems, and otherwise we will assume without loss of generality that $A, B, D$ are collinear. Either way, the following relation is true:
    \[\angle DX'C = \angle DX'P + \angle PX'C = \angle XBP + \angle PAX = \angle APB + \angle BXA = \angle DEC + \angle CXD.\]
    Note that if $A, B, D$ are collinear, then we would have $B \equiv E$ and $D \equiv F$, but the above angle chase would still hold. Similarly, $\angle EX'C = \angle EDC + \angle CXE$, implying that $X, X'$ are isogonal conjugates in $CDE$.
    
    If no three of $A, B, C, D$ are collinear, analogously, $X, X'$ are isogonal conjugates in $BCF$, implying $X, X'$ are isogonal conjugates in $ABCD$, so we are done. Otherwise, under our WLOG that $A, B, D$ are collinear, $X, X'$ will be isogonal conjugates in $BCD$, and since $X$ is excellent, this implies $X, X'$ are isogonal conjugates in $ABCD$, as desired.
\end{proof}

\begin{definition}
    Denote by $\mathcal C$ the cubic which is the locus of all excellent points $X$.
\end{definition}

We will sometimes call $\mathcal C$ the ``isogonal cubic" throughout this paper.

\begin{proof}
    Proving that the locus is a cubic amounts to examining the equation
    \[\frac{\frac{d-x}{a-x}}{\frac{\overline{d-x}}{\overline{a-x}}} = \frac{\frac{c-x}{b-x}}{\frac{\overline{c-x}}{\overline{b-x}}}\]
    in the complex plane (\cite{ref:EGMO}, 6.1). Expanding this gives the desired third-degree equation in $x$. Note that the coefficients of 3rd degree coefficients $x^2\ol x, x\ol x^2$ in the expansion are both zero if and only if $a+c = b+d$, which confirms that parallelograms produce degenerate loci.
\end{proof}

For the rest of this paper, we will assume that $\mathcal C$ is non-degenerate.

Now, we may also recall the following well-known fact.

\begin{theorem}[Isogonal Conjugate at Infinity]\label{infinity}
    Let $M, N$ be the midpoints of $AC, BD$. Then the isogonal conjugate of $P$ is the point of infinity along $MN$.
\end{theorem}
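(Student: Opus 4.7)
The plan is to work in complex coordinates with $P$ placed at the origin. With this normalization the spiral center condition $(a-p)(c-p) = (b-p)(d-p)$ collapses to $ac = bd$, which may be parametrized by a single complex number $s = a/b = d/c$, writing $a = sb$ and $d = sc$. Here the paper's standing non-degeneracy assumptions ensure $s \neq 0,1$ (and in particular that $M \neq N$). A direct substitution then gives
\[
N - M = \tfrac{1}{2}(b+d) - \tfrac{1}{2}(a+c) = \tfrac{1}{2}(1-s)(b-c),
\]
so the line $MN$ has direction $(1-s)(b-c)$ in the complex plane.

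By the definition of isogonality extended to points at infinity, it suffices to verify at each vertex $V \in \{A,B,C,D\}$ that the reflection of line $VP$ across the bisector of the quadrilateral's angle at $V$ is parallel to $MN$. For this I would invoke the standard complex-number reflection formula: the reflection of a direction $u$ at $V$ across the bisector of the angle whose sides have directions $v_1, v_2$ has direction equal, up to a positive real scalar, to $v_1 v_2 \overline{u}$.

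The main computation is then straightforward. At vertex $A$, with $u = P - A = -sb$, $v_1 = B - A = (1-s)b$, and $v_2 = D - A = s(c-b)$, one obtains
\[
v_1 v_2 \overline{u} = (1-s)b \cdot s(c-b) \cdot (-\overline{sb}) = |s|^2 |b|^2 (1-s)(b-c),
\]
a positive real multiple of the direction of $MN$. Analogous substitutions at $B$, $C$, and $D$ each yield a nonzero real scalar times $(1-s)(b-c)$, so in every case the reflection is parallel to $MN$, which is exactly what the definition of "isogonal conjugate at infinity along $MN$" requires.

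The main obstacle I anticipate is purely bookkeeping: confirming that the parametrization $a=sb$, $d=sc$ is legitimate and that the various quantities $|s|^2|b|^2$ etc.\ appearing as scalars are nonzero. All of this follows from the paper's earlier exclusion of parallelograms and of totally collinear $ABCD$. I briefly considered a synthetic approach using the spiral similarity at $P$ sending $A\to B$, $C\to D$ and hence $M\to N$, but while this is suggestive, I do not see a route to the four simultaneous isogonality statements at the vertices that avoids a coordinate calculation of essentially the above form.
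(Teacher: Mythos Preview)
Your proof is correct and takes a genuinely different route from the paper's. The paper argues synthetically: it identifies $P$ as the focus of the parabola inscribed in the complete quadrilateral $AB,BC,CD,DA$, cites the Gauss--Bodenmiller theorem to say the directrix is perpendicular to $MN$, and then invokes the classical focal property that for any conic the two focal lines from an external point are isogonal in the angle between the tangents. Applied to a parabola at each vertex, this gives that $VP$ and the line through $V$ parallel to $MN$ are isogonal in the angle of the quadrilateral at $V$.

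Your approach instead normalizes $P=0$, reduces the spiral--center condition to $ac=bd$, parametrizes by $s=a/b=d/c$, and checks the four isogonality statements by the complex reflection formula. This is more self-contained: it needs no facts about inscribed parabolas, Steiner lines, or focal isogonality, only a one-line identity at each vertex. The paper's argument, by contrast, is more conceptual and explains \emph{why} $MN$ enters (as the axis direction of the inscribed parabola), at the cost of importing several classical results. One small slip: in your reflection formula the scalar is only guaranteed to be a \emph{nonzero} real, not a positive one (as your computations at $B$ and $D$ in fact show); you correctly relax to ``nonzero real scalar'' later, and since lines are unoriented this does not affect the argument.
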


\begin{proof}
    The parabola $\mathcal P$ tangent to the sides of $ABCD$ has focus $P$, and its directrix is the Gauss-Bodenmiller line (\cite{ref:Inconic}), which is perpendicular to $MN$ (\cite{ref:Gauss}).
    
    It is well known (\cite{ref:Three}) that for any conic with foci $X_1, X_2$ and any point $X$ for which tangents from $X$ exist, $XX_1$ and $XX_2$ are isogonal in the angle formed by the tangents from $X$ to the conic. Applying this to $X \equiv A$ and conic $\mathcal P$, we conclude that $AP$ and the perpendicular from $A$ to the directrix are isogonal in $\angle BAD$. Similar relations with $B, C, D$ imply the desired result.
\end{proof}

\begin{figure}[!htbp]\centering
    \includegraphics[width=350pt]{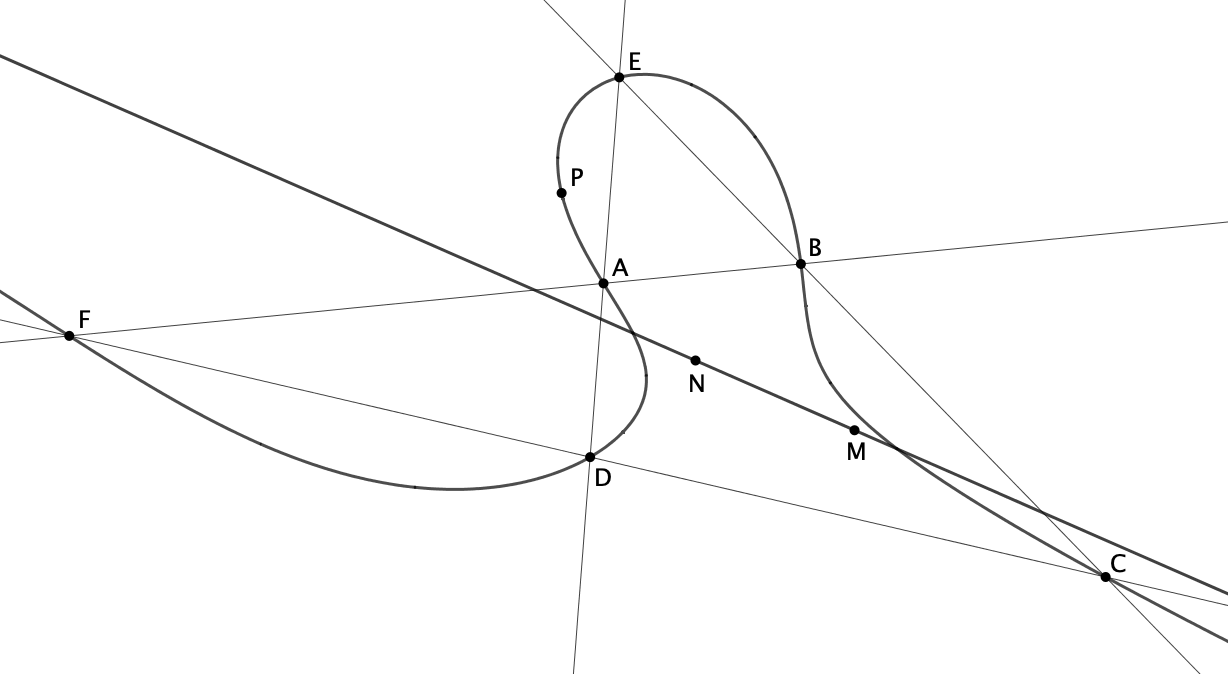}
    \caption{Main Configuration}
    \label{fig:1.1}
\end{figure}

\begin{theorem}\label{reversal}
    Consider two pairs $(X, X'), (Y, Y')$ of isogonal conjugates. Then $A, C$ are isogonal conjugates in $XYX'Y'$.
\end{theorem}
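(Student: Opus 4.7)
The idea is to apply the preceding theorem—that the spiral inverse of an excellent point equals its isogonal conjugate—to the quadrilateral $XYX'Y'$. Specifically, I will show that $P$ remains the spiral center of $XYX'Y'$, that $A$ is excellent in $XYX'Y'$, and that $C$ is the spiral inverse of $A$ in $XYX'Y'$; the theorem then forces $C$ to be the isogonal conjugate of $A$ in $XYX'Y'$.

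Locating the spiral centers is cleanest in complex coordinates with $P$ at the origin. The preceding theorem applied to $ABCD$ makes $P$ the spiral center of each of $ABCD$, $AXCX'$, and $AYCY'$, which via the Miquel-point characterization of the spiral center (as the unique point $P$ with $(V_1-P)(V_3-P)=(V_2-P)(V_4-P)$ for a quadrilateral $V_1V_2V_3V_4$) is equivalent to the chain of equalities
\[(A-P)(C-P) = (B-P)(D-P) = (X-P)(X'-P) = (Y-P)(Y'-P).\]
From $(X-P)(X'-P) = (Y-P)(Y'-P)$ we read off that $P$ is the spiral center of $XYX'Y'$; from $(X-P)(X'-P) = (A-P)(C-P)$ we read off that $P$ is the spiral center of $XAX'C$, identifying $C$ as the spiral inverse of $A$ in the quadrilateral $XYX'Y'$. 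For the excellence of $A$ in $XYX'Y'$, observe that $(AX, AX')$ and $(AY, AY')$ are both isogonal to $(AB, AD)$ at vertex $A$ (since $(X, X')$ and $(Y, Y')$ are pairs of isogonal conjugates in $ABCD$), so both share the bisectors of $\angle BAD$, forcing them to be isogonal to each other. The preceding theorem applied to $XYX'Y'$ now gives $C$ as the isogonal conjugate of $A$, completing the proof.

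The main obstacle I anticipate is handling degenerate configurations of $XYX'Y'$—for instance, when some of $X, Y, X', Y'$ coincide, when the resulting quadrilateral is a parallelogram, or when $A \in \{X, Y, X', Y'\}$—which would prevent a direct invocation of the preceding theorem. In generic position the plan above works cleanly, and edge cases should reduce to the generic conclusion by a continuity argument, though they merit explicit attention.
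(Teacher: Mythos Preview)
Your proposal is correct and follows essentially the same route as the paper: show that $A$ is excellent in $XYX'Y'$ (because $(AX,AX')$ and $(AY,AY')$ are each isogonal to $(AB,AD)$), identify $C$ as the spiral inverse of $A$ in $XYX'Y'$, and invoke the preceding theorem. The paper's proof is a two-line sketch of exactly this argument, so your complex-coordinate verification that $P$ remains the spiral center of $XYX'Y'$ simply makes explicit what the paper leaves implicit.
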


\begin{proof}
    Since $(AX, AX'), (AY, AY')$ are isogonal, $A, B, C, D$ are excellent in $XYX'Y'$. Since $A, C$ are spiral inverses in $XYX'Y'$, they are isogonal conjugates, as desired.
\end{proof}

The following corollary immediately follows.

\begin{corollary}\label{swallow-reversal}
    For isogonal conjugates $X, X'$ and excellent point $Y$, $(YX, YX'), (YA, YC)$ are isogonal.
\end{corollary}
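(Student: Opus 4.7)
The plan is to deduce this as an immediate consequence of Theorem~\ref{reversal} applied at the vertex $Y$ of the quadrilateral $XYX'Y'$, once we have produced the isogonal conjugate $Y'$ of $Y$.

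First, I would invoke the characterization of excellent points (Theorem~1.4) to guarantee that the hypothesis ``$Y$ is excellent'' provides an isogonal conjugate $Y'$ of $Y$ in $ABCD$. Thus we have two pairs of isogonal conjugates in $ABCD$, namely $(X, X')$ and $(Y, Y')$, which is exactly the setup demanded by Theorem~\ref{reversal}.

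Next, I would apply Theorem~\ref{reversal} directly to these two pairs. The conclusion is that $A$ and $C$ are isogonal conjugates in the quadrilateral $XYX'Y'$. By the definition of isogonal conjugates in a quadrilateral, this means that at every vertex of $XYX'Y'$, the pair of lines to $A$ and $C$ is isogonal with respect to the pair of sides emanating from that vertex. In particular, at the vertex $Y$, the two sides of $XYX'Y'$ emanating from $Y$ are $YX$ and $YX'$, so the isogonality condition at $Y$ reads exactly that $(YA, YC)$ and $(YX, YX')$ are isogonal, which is the desired claim.

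There is essentially no obstacle here: the corollary is a one-line specialization of Theorem~\ref{reversal}, and the only check needed is the existence of the companion conjugate $Y'$, which is supplied by $Y$ being excellent. No further angle chasing or case analysis is required, and the degenerate cases (three of $A, B, C, D$ collinear) are already handled inside Theorem~\ref{reversal}.
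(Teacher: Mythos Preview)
Your proposal is correct and is precisely the argument the paper has in mind: the paper states the corollary with the remark ``immediately follows'' from Theorem~\ref{reversal}, and your write-up simply spells out that immediate deduction (produce $Y'$ via excellence, then read off the isogonality at vertex $Y$ of $XYX'Y'$). Nothing further is needed.
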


This directly implies the following critical characterization:

\begin{corollary}[Generalization of Isogonal Cubic]\label{general}
    Consider two pairs $(X, X'), (Y, Y')$ of isogonal conjugates. Then the isogonal cubic of $ABCD$ is the isogonal cubic of $XYX'Y'$. Furthermore, any pair of isogonal conjugates $(K, L)$ in $ABCD$ are also isogonal conjugates in $XYX'Y'$.
\end{corollary}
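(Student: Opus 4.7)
The plan is to establish the ``furthermore'' clause first, and then deduce the equality of the two cubics as a direct consequence.

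To show that $K, L$ are isogonal conjugates in $XYX'Y'$, I will verify the defining isogonality relation at each of the four vertices $X, Y, X', Y'$ separately. The central tool is \Cref{swallow-reversal}, which says that for any excellent point $Z$ of $ABCD$ and any pair $(U, U')$ of isogonal conjugates in $ABCD$, the line-pair $(ZU, ZU')$ is isogonal to $(ZA, ZC)$. At the vertex $Y$ of $XYX'Y'$, I apply this twice: once to the pair $(K, L)$, obtaining that $(YK, YL)$ is isogonal to $(YA, YC)$, and once to the pair $(X, X')$, obtaining that $(YX, YX')$ is isogonal to $(YA, YC)$. Since isogonality of line-pairs is transitive through a common pair of angle bisectors, $(YK, YL)$ is isogonal to $(YX, YX')$, which is precisely the requirement at the vertex $Y$ of $XYX'Y'$. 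The argument at each of the other three vertices $X, X', Y'$ is entirely parallel.

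With the furthermore assertion in hand, the equality of the cubics follows at once. If $K$ is any point on the isogonal cubic of $ABCD$ and $L$ is its isogonal conjugate in $ABCD$, then $K, L$ are also isogonal conjugates in $XYX'Y'$, so $K$ lies on the isogonal cubic of $XYX'Y'$. For the reverse inclusion, \Cref{reversal} furnishes that $(A, C)$, and by the same argument with the roles of $A, C$ and $B, D$ exchanged also $(B, D)$, are pairs of isogonal conjugates in $XYX'Y'$. Re-running the furthermore argument in the quadrilateral $XYX'Y'$ with these two pairs playing the roles of $(X, X')$ and $(Y, Y')$ — and noting that the quadrilateral they form is exactly $ABCD$ — shows that every pair of isogonal conjugates in $XYX'Y'$ is a pair of isogonal conjugates in $ABCD$, completing the equality.

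I expect the only subtlety to lie in degenerate configurations, where $K$ or $L$ coincides with one of $X, Y, X', Y'$ or lies on the line at infinity, so that some of the ``lines'' $YK$, $YL$ must be read as tangent lines or interpreted via the extended isogonality on $\mathbb{RP}^2$. Since \Cref{swallow-reversal} is already stated in $\mathbb{RP}^2$, these cases are absorbed into its hypotheses and present no further obstacle.
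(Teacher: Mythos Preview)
Your proof is correct and uses the same central tool as the paper, namely \Cref{swallow-reversal} applied twice at each vertex to pass isogonality through the common pair $(ZA,ZC)$. The one substantive difference is in how the equality of the two cubics is closed off: the paper proves only the inclusion $\mathcal C_{ABCD}\subseteq\mathcal C_{XYX'Y'}$ and then implicitly invokes the fact that a non-degenerate cubic contained in another cubic locus must coincide with it, whereas you establish both inclusions synthetically by using \Cref{reversal} to recover $(A,C)$ and $(B,D)$ as isogonal-conjugate pairs in $XYX'Y'$ and rerunning the argument with the quadrilaterals swapped. Your route is slightly longer but avoids any appeal to degree or irreducibility; otherwise the two arguments are essentially identical.
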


\begin{proof}
    By \Cref{swallow-reversal}, for any point $Z$, if pairs of lines $(ZA, ZC), (ZB, ZD)$ are isogonal, then lines $(ZX, ZX'), (ZY, ZY')$ are also isogonal, so $ABCD$ and $XYX'Y'$ indeed share the same isogonal cubic. The second part then directly follows from \Cref{swallow-reversal}.
\end{proof}

\begin{figure}[!htbp]\centering
    \includegraphics[width=350pt]{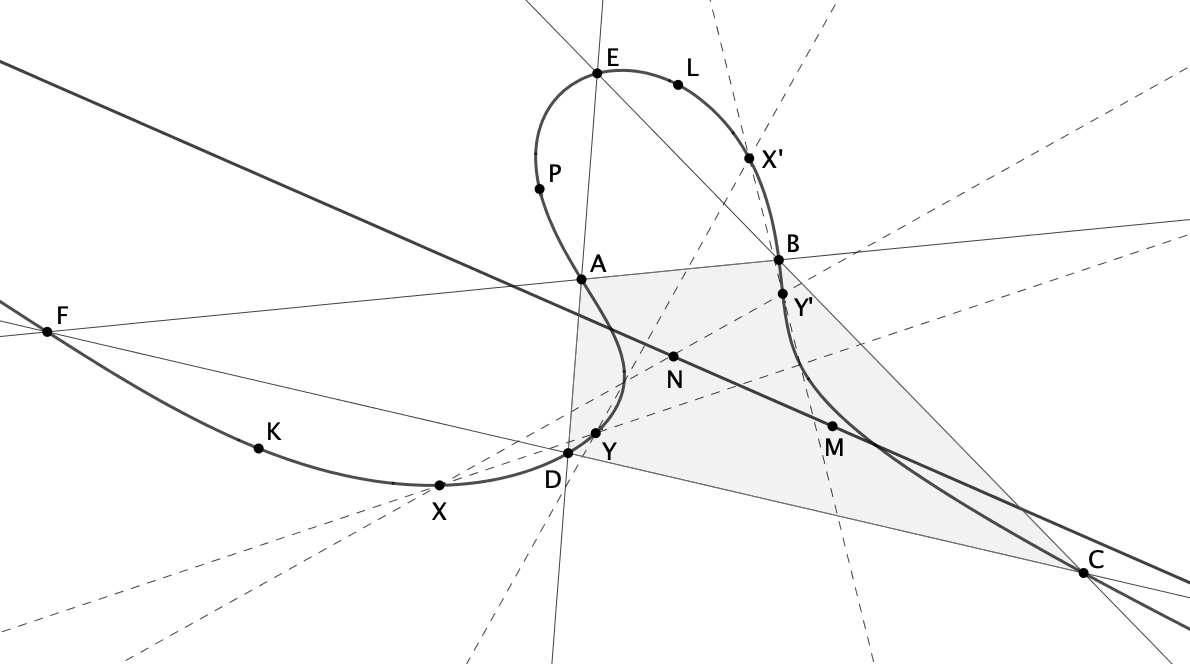}
    \caption{Quadrilateral Completeness}
    \label{fig:1.2}
\end{figure}

Thus the following is true by the Dual of Desargues' Involution Theorem on $XYX'Y'$:

\begin{corollary}[Quadrilateral Completeness]\label{completeness}
    For two pairs $(X, X'), (Y, Y')$ of isogonal conjugates, $XY \cap X'Y'$ and $XY' \cap X'Y$ lie on $\mathcal C$.
\end{corollary}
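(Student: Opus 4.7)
The plan is to reduce this to a verification about the auxiliary quadrilateral $XYX'Y'$ itself. By Corollary~\ref{general}, the isogonal cubic of $XYX'Y'$ is the same curve $\mathcal C$, so it suffices to prove that each of $Z_1 := XY \cap X'Y'$ and $Z_2 := XY' \cap X'Y$ is excellent with respect to $XYX'Y'$; that is, the pairs of lines $(Z_i X, Z_i X')$ and $(Z_i Y, Z_i Y')$ share the same angle bisectors at $Z_i$.

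Once this reduction is made I expect the verification to be essentially immediate, because $Z_1$ and $Z_2$ are diagonal points of the quadrangle and so two of the four lines from $Z_i$ to the vertices will coincide. Concretely, since $Z_1$ lies on both $XY$ and $X'Y'$, the lines $Z_1 X$ and $Z_1 Y$ collapse to the single line $XY$, while $Z_1 X'$ and $Z_1 Y'$ collapse to the single line $X'Y'$. Thus both pairs $(Z_1 X, Z_1 X')$ and $(Z_1 Y, Z_1 Y')$ are literally the unordered pair $\{XY, X'Y'\}$, which trivially shares a pair of angle bisectors with itself. The case of $Z_2$ is symmetric, using $Z_2 X = Z_2 Y' = XY'$ and $Z_2 X' = Z_2 Y = X'Y$.

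The DDIT framing mentioned in the paper is an equivalent repackaging of the same fact. Applied to the complete quadrilateral formed by the four lines $XY,\ XY',\ X'Y,\ X'Y'$ together with a point $P$ not on any of them, DDIT yields an involution on the pencil at $P$ with the three pairs $(PX, PX')$, $(PY, PY')$, $(PZ_1, PZ_2)$. When $P \in \mathcal C$, Corollary~\ref{general} gives that the first two pairs are already the isogonal involution at $P$, so the third pair $(PZ_1, PZ_2)$ must be isogonal as well. Specializing $P$ to each of $A, B, C, D$ then exhibits $(Z_1, Z_2)$ as isogonal conjugates in $ABCD$ and so both lie on $\mathcal C$.

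I do not anticipate a real obstacle here: the only bookkeeping is to ensure that $Z_1$ and $Z_2$ are well-defined distinct points in $\mathbb{RP}^2$ (which fails only in configurations ruled out by the non-degeneracy of $\mathcal C$) and to know that the two apparent coincidences of lines at $Z_i$ really do hold, which is immediate from the definitions. The direct route via Corollary~\ref{general} is shorter, but I will present the DDIT viewpoint as well to match the author's phrasing.
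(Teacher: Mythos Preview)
Your proposal is correct, and your second (DDIT) argument is exactly the paper's one-line proof: apply the dual of Desargues' involution to the complete quadrilateral on lines $XY, YX', X'Y', Y'X$ from each vertex of $ABCD$, so that the isogonality of $(X,X')$ and $(Y,Y')$ forces $(Z_1,Z_2)$ to be isogonal as well. Your first route---observing via Corollary~\ref{general} that $\mathcal C$ is the isogonal cubic of $XYX'Y'$ and then noting that the diagonal points $Z_1,Z_2$ are trivially excellent there because the two pairs of lines through them coincide---is a perfectly valid and slightly slicker alternative that bypasses DDIT altogether.
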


We now illustrate the relationship between isogonality and inconics.

\begin{theorem}
    Let $ABCD$ have inconic $\omega$ and isogonal conjugates $X, X'$. Then the tangents to $\omega$ from $X$ and $X'$ intersect at two pairs of isogonal conjugates.
\end{theorem}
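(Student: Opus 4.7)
The plan is to identify the two foci $F_1, F_2$ of the inconic $\omega$ as isogonal conjugates in $ABCD$, and then to invoke \Cref{general} in order to transport the problem into the quadrilateral $XF_1X'F_2$, where the reflection property of conics makes the claim essentially immediate.

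For the first step, I would recall the reflection property used in the proof of \Cref{infinity}: from any point $Y$ admitting two tangents $t_1, t_2$ to $\omega$, the pair $(YF_1, YF_2)$ is isogonal with $(t_1, t_2)$. I would apply this at each vertex $V$ of $ABCD$, where the two sides of $ABCD$ through $V$ are precisely the two tangents from $V$ to $\omega$, and conclude that $(VF_1, VF_2)$ is isogonal with the angle at $V$. Hence $F_1, F_2$ are isogonal conjugates in $ABCD$, so by \Cref{general} isogonal conjugation in $XF_1X'F_2$ coincides with that in $ABCD$, and it suffices to work inside $XF_1X'F_2$.

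Let $\ell_1, \ell_2$ denote the tangents from $X$ and $m_1, m_2$ the tangents from $X'$ to $\omega$, and set $P_{ij} := \ell_i \cap m_j$ for $i, j \in \{1, 2\}$. I would show that $(P_{11}, P_{22})$ and $(P_{12}, P_{21})$ are pairs of isogonal conjugates in $XF_1X'F_2$. Applying the reflection property at $P_{11}$ (where the two tangents $\ell_1, m_1$ meet) gives that $(P_{11}F_1, P_{11}F_2)$ is isogonal with $(\ell_1, m_1) = (P_{11}X, P_{11}X')$, so $P_{11}$ is excellent in $XF_1X'F_2$ and therefore has a unique isogonal conjugate there. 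Next, the reflection property at $X$ gives $(\ell_1, \ell_2)$ isogonal with $(XF_1, XF_2)$, so the isogonal to $XP_{11} = \ell_1$ in $\angle F_1 X F_2$ is $\ell_2 = XP_{22}$; analogously at $X'$ the isogonal to $X'P_{11} = m_1$ is $m_2 = X' P_{22}$. The isogonal conjugate of $P_{11}$ in $XF_1X'F_2$ must therefore lie on both $\ell_2$ and $m_2$, which forces it to equal $P_{22}$. The pair $(P_{12}, P_{21})$ is handled by the identical argument.

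The main obstacle is the opening step: recognizing the foci of the inconic $\omega$ as a pair of isogonal conjugates in $ABCD$. Once that is in hand, \Cref{general} together with the reflection property reduces everything to a bookkeeping exercise in identifying which tangents pass through which intersection point. A secondary care point is when $\omega$ degenerates to a parabola (one focus at infinity) or when $XF_1X'F_2$ is itself degenerate; these should be accommodated by the projective extension of isogonality to points at infinity already in force, though they may merit a separate remark.
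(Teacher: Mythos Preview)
Your argument is correct but follows a genuinely different route from the paper. The paper never introduces the foci of $\omega$; instead it labels the four tangent-intersections as $IJKL$ so that $IJ,JK,KL,LI$ are tangent to $\omega$, observes that $\omega$ is an inconic of $XIX'K$, and applies the Dual of Desargues' Involution from each vertex $A,B,C,D$: since the tangents $AB,AD$ from $A$ and the pair $(AX,AX')$ are both swapped by the involution and are already known to be isogonal, the involution is forced to be the isogonal one, hence $(AI,AK)$ are isogonal too. Your approach, by contrast, first recognises the foci $F_1,F_2$ of $\omega$ as a pair of isogonal conjugates via the reflection property, then invokes \Cref{general} to replace $ABCD$ by $XF_1X'F_2$, after which the reflection property at $P_{11}$, $X$, $X'$ pins down the conjugate of $P_{11}$ as $P_{22}$ directly. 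Both arguments are short; the paper's is more self-contained (no foci, no \Cref{general}) and works uniformly for any inconic, while yours leans on the machinery already built in the paper and makes the role of the foci transparent. The degenerate cases you flag (parabolic $\omega$, coincidences among $X,X',F_1,F_2$) are genuine edge cases for your method but do not arise in the paper's proof, so if you want a fully clean statement you would need the side remark you already anticipate.
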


\begin{proof}
    Call $IJKL$ the quadrilateral formed by these two pairs of tangents such that $IJ, JK, KL, LI$ are tangent to $\omega$. Since $\omega$ is an inconic of $XIX'K$ and the tangents from $A$ to $\omega$ ($AB$ and $AD$) are isogonal in $\angle XAX'$, by the Dual of Desargues' Involution on $XIX'K$ from $A$, $(AI, AK)$ are also isogonal in $\angle BAD$. Similar arguments imply $(I, K), (J, L)$ are isogonal conjugates as desired.
\end{proof}

\begin{corollary}\label{tangency-exists}
    For excellent point $X$ and isogonal conjugates $Y, Y'$, the line $XY'$ is tangent to the inconic of lines $AB, BC, CD, DA, XY$.
\end{corollary}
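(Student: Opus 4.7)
The plan is to identify the Dual of Desargues' Involution at $X$ (for the pencil of inconics of the complete quadrilateral $ABCD$) with the isogonal involution at $X$, and then observe that both the pair $(XY, XY')$ and the pair of tangents from $X$ to the inconic $\omega$ in question lie in this common involution, so the second tangent from $X$ to $\omega$ must equal $XY'$.

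Concretely, I would first recall that the inconics of the complete quadrilateral with sides $AB, BC, CD, DA$ form a pencil whose three degenerate members are the pairs of opposite diagonal points $\{A, C\}$, $\{B, D\}$, and $\{AB \cap CD,\; BC \cap DA\}$. By the Dual of Desargues' Involution Theorem, from any point $P$ the tangents to members of this pencil form an involution $\iota_P$ on the pencil of lines through $P$, and $\iota_P$ contains the three pairs coming from the degenerate members, in particular $(PA, PC)$ and $(PB, PD)$. Setting $P = X$, the hypothesis that $X$ is excellent means $(XA, XC)$ and $(XB, XD)$ share a common pair of angle bisectors, and hence $\iota_X$ must be the isogonal involution at $X$ about those bisectors. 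On the other hand, \Cref{swallow-reversal} applied with $(Y, Y')$ as the isogonal pair and $X$ as the excellent point gives that $(XY, XY')$ is isogonal with $(XA, XC)$ at $X$, so $(XY, XY')$ is also a pair of $\iota_X$. Finally, since $\omega$ is tangent to the four sides of $ABCD$ and to $XY$, its two tangents from $X$ are $XY$ and some second line $\ell$, and the involution theorem forces $(XY, \ell) \in \iota_X$; since $XY$ has a unique partner in $\iota_X$, namely $XY'$, we conclude $\ell = XY'$, as desired.

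The main obstacle I anticipate is handling degeneracies: both $\iota_X$ and $\omega$ are well-defined only when $ABCD$ is in suitably general position with respect to $X$ and $Y$ (e.g., $X$ is not a vertex or diagonal point of $ABCD$, the four sides together with $XY$ are in general position, and $X \notin \omega$). Each such degenerate configuration lies on a proper subvariety of $\mathcal C$, and I would either argue by continuity along the cubic $\mathcal C$ or verify these edge cases directly using the earlier lemmas; I do not expect them to obstruct the argument in an essential way.
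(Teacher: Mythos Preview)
Your argument is correct and follows essentially the same idea as the paper: both identify the tangent-pair involution at $X$ (for the inconic pencil of $ABCD$) with the isogonal involution in $\angle AXC$, and then finish via \Cref{swallow-reversal}. The only difference is that you apply the Dual of Desargues' Involution directly at $X$, whereas the paper obtains the same conclusion by first passing through the preceding theorem (tangents from an isogonal pair to an inconic meet at isogonal-conjugate pairs) and then invoking \Cref{swallow-reversal}; your route is slightly more direct but not materially different.
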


\section{Relationship of Inconics with Excellent Points}

Consider any excellent point $X$. Let $\omega$ be the inconic of $AB, BC, CD, DA, PX$; by \Cref{tangency-exists}, the line $\ell_1$ through $X$ parallel to $MN$ is also tangent to $\omega$. Reflect $\ell_1$ over the center of $\omega$ to get the second tangent $\ell_2$ from the point of infinity $\infty_{MN}$ along $MN$ to $\omega$; let $\ell_2$ meet the second tangent from $P$ to $\omega$ (other than $PX)$ at $X'$. By \Cref{tangency-exists}, $X$ and $X'$ are isogonal conjugates. Let $PX$ meet $\ell_2$ at $Y$, and let $PX'$ meet $\ell_1$ at $Y'$; then $Y, Y'$ are isogonal conjugates as well. We immediately get the following two corollaries:

\begin{figure}[!htbp]\centering
    \includegraphics[width=400pt]{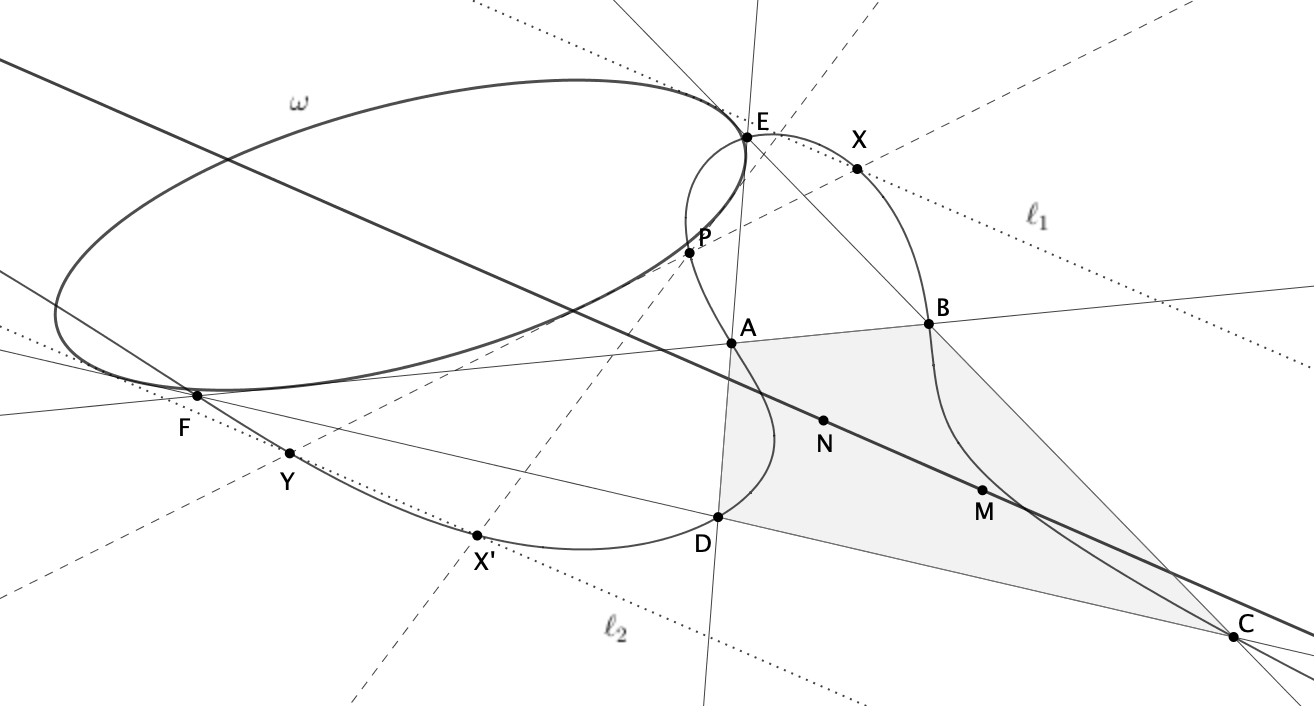}
    \caption{Tangents to an Inconic}
    \label{fig:2.1}
\end{figure}

\begin{theorem}\label{isogonal-MN}
    The midpoint of any two isogonal conjugates lies on $MN$.
\end{theorem}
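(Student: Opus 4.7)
The plan is to exploit the construction already developed immediately before the statement of the theorem. For the given pair of isogonal conjugates $X, X'$, the construction produces a conic $\omega$ tangent to $AB, BC, CD, DA$, together with a pair of parallel tangent lines $\ell_1, \ell_2$ to $\omega$, both parallel to $MN$, such that $X \in \ell_1$ and $X' \in \ell_2$. Since $\ell_1, \ell_2$ are precisely the two tangents to $\omega$ from the point at infinity $\infty_{MN}$, they are interchanged by the central symmetry of $\omega$. Consequently the locus of midpoints of segments with one endpoint on $\ell_1$ and the other on $\ell_2$ is exactly the line through the center of $\omega$ parallel to $\ell_1$ (and hence parallel to $MN$).

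The key step is then to argue that the center of $\omega$ itself lies on $MN$. This is the content of Newton's theorem on the Gauss--Newton line of a complete quadrilateral: the center of any conic inscribed in a complete quadrilateral lies on the line joining the midpoints of its three diagonals. In our setting, $\omega$ is an inconic of the complete quadrilateral $ABCD$, and its Gauss--Newton line is exactly $MN$ (the third diagonal midpoint being that of $EF$, where $E = AB \cap CD$ and $F = AD \cap BC$). Combining with the previous paragraph, the midpoint of $XX'$ lies on a line parallel to $MN$ that passes through a point of $MN$, and therefore it lies on $MN$ itself.

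The main subtleties are confined to edge cases. When $X \equiv P$, we have $X' = \infty_{MN}$ by \Cref{infinity}, so the ``midpoint'' is a point at infinity lying on $MN$ in $\mathbb{RP}^2$ and the statement holds trivially. If $X$ or $X'$ already lies on $MN$, the midline computation is unchanged and the conclusion still holds. The one obstacle worth flagging is the non-degeneracy of $\omega$: when $\omega$ collapses (say to a pair of points or a double line), one must interpret ``center of $\omega$'' via the limiting envelope, but the conclusion persists since inconics of $ABCD$ form a continuous family and the locus assertion is closed.
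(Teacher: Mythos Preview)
Your argument is correct and is essentially what the paper intends: it states Theorem~\ref{isogonal-MN} as an ``immediate corollary'' of the inconic construction preceding it, and you have spelled out the implicit step, namely that the midline of the two parallel tangents $\ell_1,\ell_2$ coincides with $MN$ because the center of the inconic $\omega$ lies on the Newton--Gauss line $MN$. The only addition worth noting is that the paper does not explicitly invoke Newton's theorem here, so your write-up actually supplies the detail the paper omits.
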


\begin{theorem}\label{isogonal-parallel}
    For $X \in \mathcal C$ and $P_\infty$ the point of infinity along $MN$, let $Y = PX \cap \mathcal C$ and $X' = P_\infty Y \cap \mathcal C$. Then $X, X'$ are isogonal conjugates.
\end{theorem}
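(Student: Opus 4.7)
The plan is to recognize the theorem as a clean repackaging, in the language of third intersections with $\mathcal C$, of the explicit construction carried out in the paragraph immediately preceding the statement. The preliminary fact I would use is that both $P$ and $P_\infty$ lie on $\mathcal C$: by \Cref{infinity}, $P_\infty$ is the isogonal conjugate of the spiral center $P$, so each of them is excellent.

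I would then invoke the construction verbatim. Given the excellent point $X$, it produces the inconic $\omega$ of the five lines $AB, BC, CD, DA, PX$, the two parallel tangents $\ell_1, \ell_2$ from $P_\infty$ to $\omega$ (with $\ell_1 \ni X$), and a point $X'$ defined as the intersection of $\ell_2$ with the second tangent from $P$ to $\omega$; simultaneously $Y := PX \cap \ell_2$ and $Y' := PX' \cap \ell_1$. The paragraph argues via \Cref{tangency-exists} that $(X, X')$ and $(Y, Y')$ are pairs of isogonal conjugates, so $X, X', Y, Y'$ all lie on $\mathcal C$. Now the line $PX$ already contains the three $\mathcal C$-points $P, X, Y$, so the construction's $Y$ is indeed the third intersection $PX \cap \mathcal C$ in the sense of the excerpt's definition. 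The line $\ell_2$ has direction $MN$, so it passes through $P_\infty$; together with $Y$ and $X'$ this gives $\ell_2 = P_\infty Y$, and the three $\mathcal C$-points $P_\infty, Y, X'$ collinear on $\ell_2$ identify $X'$ with $P_\infty Y \cap \mathcal C$. Combining these two identifications matches the statement exactly.

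The main obstacle is not the core geometry but the bookkeeping of degenerate configurations that must be subsumed into the itemized convention for $XY \cap \mathcal C$ — for example, when $X$ coincides with $P$ or $P_\infty$, when $PX$ is parallel to $MN$ (so $\ell_1$ collapses onto $PX$ and $Y$ collapses onto $X$), or when $PX$ is tangent to $\mathcal C$ at $P$ or $X$. In each such case one or more of $P, X, Y$ is forced to coincide, and ``third intersection" must be read as a tangency point rather than a distinct intersection, but no new geometric content is required beyond the construction already given.
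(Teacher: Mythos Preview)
Your proposal is correct and is essentially the paper's own argument: the paper presents the inconic construction in the paragraph just before the statement and then declares \Cref{isogonal-MN} and \Cref{isogonal-parallel} as immediate corollaries, leaving implicit exactly the identifications $Y = PX \cap \mathcal C$ and $X' = P_\infty Y \cap \mathcal C$ that you spell out. Your extra care about degenerate configurations and about why $P, P_\infty \in \mathcal C$ is a welcome elaboration, but no new idea is introduced.
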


We may also note that the midpoint of $XY$ lies on $MN$. Since $\ell_1 \parallel \ell_2$, the bisectors of $\angle PX\infty_{MN}, \angle PY\infty_{MN}$ are parallel (perpendicular) to each other, with the perpendicular pairs of bisectors intersecting on $MN$. Thus, the following is a direct result.

\begin{theorem}[Parallel Bisectors]\label{rho-aias}
    If $X, Y$ lie on $\mathcal C$ with $XY$ passing through $P$, then the midpoint of $XY$ lies on $MN$, and the bisectors of $\angle AXC$ and $\angle AYC$ are parallel to each other.
\end{theorem}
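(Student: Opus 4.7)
The plan is to leverage directly the construction set up in the paragraph preceding the theorem: there are parallel tangents $\ell_1, \ell_2$ to the inconic $\omega$ of $AB, BC, CD, DA, PX$, both parallel to $MN$, with $X, Y' \in \ell_1$ and $X', Y \in \ell_2$, and $(X,X'), (Y,Y')$ are pairs of isogonal conjugates. Since $X, Y, P$ are collinear by \Cref{isogonal-parallel}, both claims will reduce to elementary facts about parallel lines together with \Cref{swallow-reversal}.

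For the midpoint assertion, I would invoke the standard fact that the center of an inscribed conic of a complete quadrilateral lies on its Newton-Gauss line; for $ABCD$ this line passes through the midpoints $M, N$ of the diagonals and is therefore precisely the line $MN$. Since $\ell_2$ was defined as the reflection of $\ell_1$ through the center of $\omega$, and that center lies on $MN$ with $\ell_1 \parallel \ell_2 \parallel MN$, the line $MN$ is the midline of the parallel strip bounded by $\ell_1$ and $\ell_2$. Hence for any $X \in \ell_1$ and $Y \in \ell_2$, the midpoint of segment $XY$ lies on $MN$.

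For the parallel-bisector assertion, I would apply \Cref{swallow-reversal} at $X$: since $X$ is excellent and $(Y,Y')$ are isogonal conjugates, $(XY, XY')$ is isogonal to $(XA, XC)$, so the two pairs share a common pair of angle bisectors. The line $XY'$ equals $\ell_1$ (as $X, Y' \in \ell_1$), and $XY$ is the line through $P$. Thus the bisectors of $\angle AXC$ coincide with the bisectors of the angle formed at $X$ by $\ell_1$ and line $XP$. Symmetrically, applying \Cref{swallow-reversal} at $Y$ with the pair $(X, X')$, the bisectors of $\angle AYC$ coincide with those of the angle at $Y$ formed by $\ell_2$ and line $YP$.

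To conclude, since $\ell_1 \parallel \ell_2$ and the line $XP = YP$ is a common transversal, the two angles in question are equal (alternate/corresponding angles), and their perpendicular pairs of bisectors are therefore parallel to each other. No step presents a genuine obstacle; the only thing requiring mild care is citing the Newton-Gauss fact for the center of $\omega$, after which both conclusions drop out from the symmetry of the parallel configuration.
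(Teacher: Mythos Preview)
Your proposal is correct and follows essentially the same route as the paper. The paper also derives both claims from the $\ell_1 \parallel \ell_2$ configuration: it notes the midpoint fact (your explicit invocation of the Newton--Gauss line for the center of $\omega$ fills in a step the paper leaves tacit) and then compares the bisectors of $\angle PX\infty_{MN}$ and $\angle PY\infty_{MN}$, which via \Cref{swallow-reversal} applied to the pair $(P,\infty_{MN})$ coincide with those of $\angle AXC$ and $\angle AYC$. You instead apply \Cref{swallow-reversal} with the pairs $(Y,Y')$ and $(X,X')$; this is a cosmetic difference, and the concluding transversal argument is identical.
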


\begin{figure}[!htbp]\centering
    \includegraphics[width=300pt]{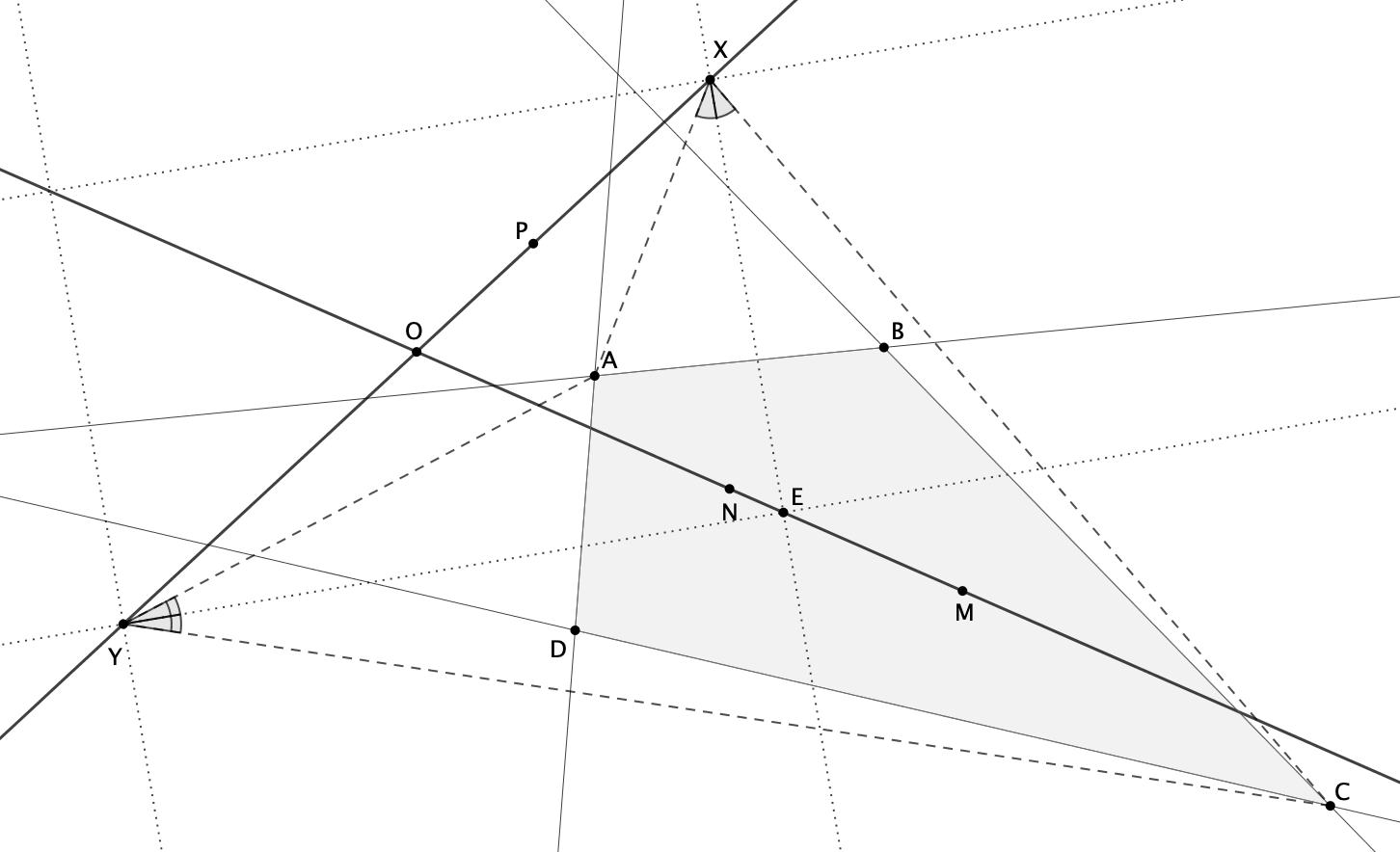}
    \caption{Angles with Parallel Bisectors}
    \label{fig:2.2}
\end{figure}

This produces a neat construction: if we are given an excellent point $X$, we may construct an excellent point $Y$ such that the bisectors of $\angle AXC$ and $\angle AYC$ are parallel to each other. By \Cref{rho-aias}, this is done by letting lines $PX$ and $MN$ intersect at a point $O$ and setting $Y$ to be the reflection of $X$ over $O$.

\section{Singular Points and Constructing Elements of the Isogonal Cubic}

We begin by noting that since $\mathcal C$ has real coefficients, for any two non-singular points $X$ and $Y$ on $\mathcal C$ in $\mathbb{RP}^2$, line $XY$ is either tangent to $\mathcal C$ at $X$ or $Y$, or $XY$ intersects $\mathcal C$ at a third point in $\mathbb{RP}^2$.

We begin with a well-known general lemma about cubics.

\begin{lemma}\label{four}
    From any point $X$ on general non-degenerate cubic $\mathcal C$, there are at most $4$ points $Y \in \mathcal C$ other than $X$ for which $XY$ intersects $\mathcal C$ at $Y$ with multiplicity 2.
\end{lemma}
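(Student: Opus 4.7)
The argument is the standard polar conic computation. Let $F(x,y,z)$ be a defining homogeneous cubic of $\mathcal C$ and fix $X = (x_0 : y_0 : z_0) \in \mathcal C$, which we may assume is a smooth point of $\mathcal C$ (a non-degenerate cubic has at most one singular point, and singular $X$ can be treated separately by projection, giving a stronger bound). Define the \emph{polar conic}
\[
\mathcal Q_X : \; x_0 F_x + y_0 F_y + z_0 F_z = 0,
\]
a curve of degree $2$. The key observation is that for any smooth point $Y \in \mathcal C$ with $Y \ne X$, the tangent line to $\mathcal C$ at $Y$, namely $\sum_i (x_i - y_i) F_{x_i}(Y) = 0$, passes through $X$ exactly when $\sum_i x_{0,i} F_{x_i}(Y) = 0$, i.e.\ when $Y \in \mathcal Q_X$. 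So $(\mathcal Q_X \cap \mathcal C) \setminus \{X\}$ is precisely the set of points $Y \ne X$ at which $XY$ is tangent to $\mathcal C$, i.e., the points I am trying to count.

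The heart of the plan is to show that $\mathcal Q_X$ and $\mathcal C$ already intersect at $X$ with multiplicity at least $2$. That $X \in \mathcal Q_X$ is Euler's identity applied once: $\sum_i x_{0,i} F_{x_i}(X) = 3 F(X) = 0$. For the tangency, I would apply Euler once more to each partial $F_{x_i}$ (which is homogeneous of degree $2$), obtaining $\sum_j x_{0,j} F_{x_i x_j}(X) = 2 F_{x_i}(X)$. Consequently the tangent to $\mathcal Q_X$ at $X$ equals
\[
\sum_i (x_i - x_{0,i}) \sum_j x_{0,j} F_{x_i x_j}(X) = 2 \sum_i (x_i - x_{0,i}) F_{x_i}(X) = 0,
\]
which is precisely the tangent to $\mathcal C$ at $X$. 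Hence the intersection multiplicity of $\mathcal Q_X$ and $\mathcal C$ at $X$ is at least $2$.

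To conclude, since $\mathcal C$ is an irreducible cubic, it shares no component with the conic $\mathcal Q_X$, so Bezout's theorem gives $|\mathcal C \cap \mathcal Q_X| = 6$ counted with multiplicity. Subtracting the contribution $\ge 2$ at $X$ leaves at most $4$ further points of intersection, each a valid choice of $Y$ as characterized above. This yields the bound. The main obstacle in executing this plan is the second Euler-identity computation verifying the tangency of $\mathcal Q_X$ and $\mathcal C$ at $X$; it is precisely this step that produces the ``$2$'' subtracted from the Bezout count of $6$, and everything else is routine.
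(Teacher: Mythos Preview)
Your argument is correct and essentially identical to the paper's own proof: both introduce the polar conic $x_0F_x+y_0F_y+z_0F_z=0$, apply B\'ezout to get six intersections with $\mathcal C$, and then use Euler's homogeneous function theorem on the second partials to show that the tangent to the polar at $X$ coincides with the tangent to $\mathcal C$ at $X$, forcing intersection multiplicity $\ge 2$ there and leaving at most four other points. The only cosmetic difference is that the paper also remarks explicitly that a singular $Y$ automatically satisfies the polar equation, so the count covers that case as well.
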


\begin{proof}
    Note that if $X$ is singular, there are no such points $Y$, or else line $XY$ would intersect $\mathcal C$ at both $X$ and $Y$ with multiplicity 2, yielding $2+2 = 4$ total intersections. Assuming that $X$ is non-singular now, consider the embedding of $\mathcal C$ in $\mathbb{CP}^2$ with equation $F(x, y, z) = 0$. For any point $Y = (p: q: r)$ on $\mathcal C$, $Y$ is either a singular point, or the equation of the tangent at $Y$ is given by
    \[\frac{\partial F}{\partial x}(p, q, r) \cdot x + \frac{\partial F}{\partial y}(p, q, r) \cdot y + \frac{\partial F}{\partial z}(p, q, r) \cdot z = 0\]
    We want to $X = (x_0: y_0: z_0)$ to satisfy the above equation, so fixing $X$ gives us an equation in $p, q, r$ with degree $3-1 = 2$. Let $g(x, y, z)$ denote the expression
    \[\frac{\partial F}{\partial x}(x, y, z) \cdot x_0 + \frac{\partial F}{\partial y}(x, y, z) \cdot y_0 + \frac{\partial F}{\partial z}(x, y, z) \cdot z_0\]
    Regardless of whether $Y$ is a singular point of $\mathcal C$ or $XY$ is tangent to $\mathcal C$ at $Y$, all such points $Y$ will be solutions to the cubic $F(x, y, z) = 0$ and the conic $g(x, y, z) = 0$, which by Bezout's Theorem (\cite{ref:Fulton}, Section 5.3) gives at most $3(3-1)$ total solutions.
    
    Note that $X$ itself also satisfies both equations; we now claim that $X$ is actually a solution with multiplicity at least 2. Let $\ell$ be the tangent to $\mathcal C$ at $X$; then $\ell$ has equation
    \[\frac{\partial F}{\partial x}(x_0, y_0, z_0) \cdot x + \frac{\partial F}{\partial y}(x_0, y_0, z_0) \cdot y + \frac{\partial F}{\partial z}(x_0, y_0, z_0) \cdot z = 0\]
    To prove that $X$ is a solution to both $F$ and $g$ with multiplicity 2, we use the fact that $I_X(F, g) \ge m_X(F)m_X(g)$, where $I_X$ denotes the multiplicity of the intersection of curves $F$ and $g$ at $X$, and $m_X(F), m_X(g)$ denoting the multiplicity of point $P$ on curves $F, g$ (\cite{ref:Fulton}, Section 3.3).
    
    If $X$ is a singular point of $F$, then $I_X \ge 2$ as desired. Otherwise, equality holds iff the tangents at $X$ to $F$ and $g$ are distinct. Thus it suffices to show that $\ell$ is tangent to the conic $\mathcal H$ formed by $g$. To prove this, the tangent to $\mathcal H$ at $X$ is given by equation $Ax + By + Cz = 0$, where
    \[A = \frac{\partial^2 F}{\partial x^2}(x_0, y_0, z_0) \cdot x_0 + \frac{\partial^2 F}{\partial y^2}(x_0, y_0, z_0) \cdot y_0 + \frac{\partial^2 F}{\partial z^2}(x_0, y_0, z_0) \cdot z_0\]
    and $B, C$ are defined similarly. By Euler's Homogeneous Function Theorem (\cite{ref:Euler}),
    \[2 \cdot \frac{\partial F}{\partial x}(x_0, y_0, z_0) = \frac{\partial^2 F}{\partial x^2}(x_0, y_0, z_0) \cdot x_0 + \frac{\partial^2 F}{\partial y^2}(x_0, y_0, z_0) \cdot y_0 + \frac{\partial^2 F}{\partial z^2}(x_0, y_0, z_0) \cdot z_0\]
    which implies that
    \[A = 2 \cdot \frac{\partial F}{\partial x}(x_0, y_0, z_0), \quad\quad B = 2 \cdot \frac{\partial F}{\partial y}(x_0, y_0, z_0), \quad\quad C = 2 \cdot \frac{\partial F}{\partial z}(x_0, y_0, z_0)\]
    so the tangent to $\mathcal H$ at $X$ indeed has the same equation as $\ell$, as desired.
    
    Thus $X$ is a solution to $\mathcal C$ and $\mathcal H$ with multiplicity at least 2, so there are at most $3(3-1) - 2 = 4$ such points $Y$, as desired.
\end{proof}

The following lemma also better characterizes $\mathcal C$.

\begin{lemma}\label{truth}
    In $\mathbb{RP}^2$, $\mathcal C$ contains exactly one point at infinity.
\end{lemma}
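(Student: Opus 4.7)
The approach is to work directly with the equation of $\mathcal{C}$ in complex coordinates from the earlier cubic-equation proof, and to show that its degree-$3$ homogeneous part (which determines behavior on the line at infinity) factors with a real-irreducible quadratic factor, leaving only one real direction.

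Starting from the expanded form
\[(d-x)(b-x)(\bar a - \bar x)(\bar c - \bar x) - (a-x)(c-x)(\bar b - \bar x)(\bar d - \bar x) = 0,\]
I would first observe that no $x^3$ or $\bar x^3$ monomial can appear, since each side contains only two holomorphic and two antiholomorphic linear factors. A direct expansion (a refinement of the coefficient analysis already carried out in the proof that $\mathcal{C}$ is a cubic) shows that with $s := (b+d) - (a+c)$, the coefficients of $x^2\bar x$ and $x\bar x^2$ are $\bar s$ and $s$ respectively, so the homogeneous degree-$3$ part of the equation is
\[H(x, \bar x) = x\bar x\bigl(\bar s\, x + s\, \bar x\bigr).\]

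Writing $x = u + iv$ and $s = p + iq$ converts this to $H = 2(u^2+v^2)(pu+qv)$, a real homogeneous cubic in $u, v$. The non-degeneracy assumption on $\mathcal{C}$ gives $s \ne 0$, so $(p, q) \ne (0, 0)$ and $pu+qv$ is a nontrivial real linear form. Homogenizing $\mathcal{C}$ in $\mathbb{RP}^2$ with coordinates $(u : v : w)$, the points at infinity of $\mathcal{C}$ are exactly the zeros of $H$ on the line $w = 0$. In $\mathbb{RP}^2$, the factor $u^2 + v^2$ has no real zero, so the unique real point at infinity is the projective direction $(q : -p : 0)$; by \Cref{infinity} this coincides with $\infty_{MN}$.

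The only genuine work is extracting the degree-$3$ part of the defining polynomial, and that is a short symmetric expansion building on what has already been done. Everything else is then a one-line observation about the indefiniteness of $u^2 + v^2$ over $\mathbb{R}$.
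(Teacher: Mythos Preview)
Your argument is essentially correct and recovers the same underlying fact as the paper, namely that the leading homogeneous part of $\mathcal C$ factors as $(u^2+v^2)$ times a nonzero real linear form. There is, however, a sign slip: a direct expansion gives the coefficient of $x\bar x^2$ as $-s$, not $+s$, so the cubic part is $H = x\bar x(\bar s\, x - s\,\bar x) = 2i(u^2+v^2)(pv-qu)$. Since the full defining expression is purely imaginary (conjugating and swapping $x\leftrightarrow\bar x$ negates it), the real equation has degree-$3$ part proportional to $(u^2+v^2)(pv-qu)$, and the unique real point at infinity is $(p:q:0)$, the direction of $s = (b+d)-(a+c)$, which is indeed $\infty_{MN}$. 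Your stated direction $(q:-p:0)$ is the perpendicular one; this does not affect the lemma itself but would misidentify the asymptote.

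As for the comparison: the paper argues conceptually that the circular points $I = (1:i:0)$ and $J = (1:-i:0)$ lie on the $\mathbb{CP}^2$-embedding $\mathcal C_0$ ``by virtue of being isogonal conjugates,'' so the line at infinity meets $\mathcal C_0$ in $I$, $J$, and one further (necessarily real) point, already exhibited as $\infty_{MN}$ by \Cref{infinity}. Your computation is the explicit incarnation of the same statement: the factor $u^2+v^2 = (u+iv)(u-iv)$ \emph{is} the assertion $I,J\in\mathcal C_0$. The paper's route is shorter and requires no expansion, while yours is self-contained and does not lean on the somewhat informal claim that $I,J$ are ``isogonal conjugates''; either way the content is identical.
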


\begin{proof}
    The embedding of $\mathcal C$ in $\mathbb{CP}^2$ will contain the circular points at infinity $I, J$ by virtue of being isogonal conjugates, so back in $\mathbb{RP}^2$ there can only be one real point at infinity. On the other hand, given $ABCD$ the point of infinity along the Newton-Gauss line will lie on $\mathcal C$, so there is exactly one.
\end{proof}

To better establish tangencies in $\mathcal C$, we first need to examine singular points.

\begin{theorem}[Singular Points on the Isogonal Cubic]\label{singular}
    A point $I \in \mathcal C$ is a singular point if and only if the isogonal conjugate of $I$ is itself.
\end{theorem}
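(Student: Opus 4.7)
The plan is to exploit the explicit construction of the isogonal conjugate in \Cref{isogonal-parallel}, together with the intersection conventions of Definition~1.4, handling each direction separately.

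For the forward direction, suppose $I \in \mathcal C$ is singular. Applying \Cref{isogonal-parallel} with $X = I$, the isogonal conjugate is $I' = P_\infty Y \cap \mathcal C$ where $Y = PI \cap \mathcal C$. By the first bullet of Definition~1.4, since $I$ is singular, $PI \cap \mathcal C = I$, so $Y = I$; the same rule applied again yields $P_\infty I \cap \mathcal C = I$, and hence $I' = I$.

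For the reverse direction, suppose $I$ is smooth with $I' = I$. I will derive a contradiction by showing no such smooth $I$ can exist on $\mathcal C$. Let $Y = PI \cap \mathcal C$, so that $I = P_\infty Y \cap \mathcal C$ by \Cref{isogonal-parallel}. A case analysis via Definition~1.4 forces $I$ to lie on the line $\ell$ through $P$ parallel to $MN$: if $Y \notin \{I, P_\infty\}$, then $I$ lies on both $PY$ and $P_\infty Y$, so these lines coincide and contain both $P$ and $P_\infty$; the tangent case $Y = I$ makes both $PI$ and $P_\infty I$ tangent to $\mathcal C$ at the smooth point $I$, forcing $P, I, P_\infty$ collinear by uniqueness of the tangent; and the case $Y = P_\infty$ places $I$ on $\ell$ directly.

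The line $\ell$ meets $\mathcal C$ at $P$, $P_\infty$, and at most one further point $Q$. Since $P' = P_\infty \neq P$ by \Cref{infinity}, neither $P$ nor $P_\infty$ is self-conjugate, so we must have $I = Q$. Re-applying \Cref{isogonal-parallel} with $X = Q$ yields $Y = PQ \cap \mathcal C = P_\infty$ (the third intersection of $\ell$), whence $Q' = P_\infty P_\infty \cap \mathcal C$, namely the third intersection of the tangent to $\mathcal C$ at $P_\infty$. This tangent is an affine line through $P_\infty$, hence parallel to $MN$, so it either equals $\ell$ or meets $\ell$ only at $P_\infty$. In the latter case, $Q$ lying on both forces $Q = P_\infty$, a contradiction; in the former, $\ell$ being tangent at $P_\infty$ makes $P_\infty$ absorb multiplicity $2$ in $\ell \cap \mathcal C$, and with $P$ contributing multiplicity at least $1$ there is no room for a distinct third intersection $Q$.

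The main obstacle is the case analysis in the reverse direction, and in particular ruling out the candidate $Q$ on $\ell$: this requires tracking the tangent (asymptote) to $\mathcal C$ at $P_\infty$ and carefully accounting for multiplicities in $\ell \cap \mathcal C$.
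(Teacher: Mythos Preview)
Your approach is genuinely different from the paper's. Both of your directions route through \Cref{isogonal-parallel} (the construction $X' = P_\infty(PX\cap\mathcal C)\cap\mathcal C$), whereas the paper argues the self-conjugate $\Rightarrow$ singular direction by angle-chasing via \Cref{swallow-reversal} (any secant through a smooth self-conjugate point is forced to be the perpendicular bisector of $AC$), and the singular $\Rightarrow$ self-conjugate direction by \Cref{completeness} together with a pole--polar contradiction on an auxiliary conic. Your method is more uniform and buys a shorter forward direction; the paper's method stays closer to the synthetic definitions and avoids any appeal to \Cref{isogonal-parallel} at special points.

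Your reverse direction is sound: the case split forcing $I\in\ell=PP_\infty$ is correct, and the asymptote argument rules out the residual third point $Q$. One small omission is the assertion that the tangent at $P_\infty$ is an \emph{affine} line; this needs the observation (implicit in the proof of \Cref{truth}) that the line at infinity meets $\mathcal C_0$ at the three distinct points $P_\infty, I, J$, hence only simply at $P_\infty$, so it cannot be the tangent there.

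The forward direction is where I would push back. You invoke \Cref{isogonal-parallel} at a singular point $I$ and then read off $Y=I$, $I'=I$ from Definition~1.4. But the paper's proof of \Cref{isogonal-parallel} is the inconic construction of Section~3: it produces $Y$ geometrically as $PX\cap\ell_2$ and only afterwards identifies it with $PX\cap\mathcal C$. When $X$ is singular, $PX$ already meets $\mathcal C$ with total multiplicity~$3$ at $\{P,X\}$, so the geometric $Y$ is forced into $\{P,X\}$, yet nothing in that construction pins it to $X$ as Definition~1.4 would require. In short, the \emph{statement} of \Cref{isogonal-parallel} read through Definition~1.4 covers singular $X$, but the available \emph{proof} does not, and your two-line argument is silently relying on that unestablished case. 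The fix is standard but should be said: spiral inversion and the third-intersection map $X\mapsto P_\infty(PX\cap\mathcal C)\cap\mathcal C$ are rational self-maps of $\mathcal C$ that agree on the smooth locus, hence everywhere. This gap is presumably also why the paper takes the longer route for that direction.
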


\begin{proof}
    We remind our readers of our assumption in Section 1 that $\mathcal C$ is not degenerate.

    First, we prove that if $I$ is its own isogonal conjugate, then it is a singular point. Assume the contrary, that $I$ is not singular; then there are at most five non-singular points $X \in \mathcal C$ such that $XI$ is tangent to $\mathcal C$ at either $X$ or $I$. For all points $X$ such that $XI$ is \textit{not} tangent to $\mathcal C$, line $XI$ will intersect $\mathcal C$ at a point $Y \ne I, X$. By \Cref{swallow-reversal} this means that the line through $I, X, Y$ bisects angles $\angle AXC$, $\angle AYC$.
    
    In particular, this means line $CX$ is the reflection of line $AX$ over line $XY$, and line $CY$ is the reflection of line $AY$ over line $XY$, which implies that $C$ is the reflection of $A$ over $XY$. Thus $XI$ is the perpendicular bisector of $AC$. But line $XI$ rotates around $I$ as we vary $X$ along the cubic, contradicting the uniqueness of the perpendicular bisector of $AC$, the desired contradiction.
    
    Next, we prove that if $I$ is a singular point, then the isogonal conjugate of $I$ is itself. Assume the contrary; then let $J \ne I$ be the isogonal conjugate of $I$. Choose any isogonal conjugates $K, L$ distinct from $I, J$ (though we can set $K \equiv A$ etc). By \Cref{completeness}, $X = KI \cap LJ$ and $Y = KJ \cap LI$ will lie on $\mathcal C$. Since $I$ is a singular point, $KI$ will not intersect $\mathcal C$ at a point other than $K$ or $I$, so $X$ is either the same as $K$ or $I$.
    
    If $X \equiv I$, then $I, L, J$ are collinear. But since $I$ is singular line $ILJ$ intersects $\mathcal C$ at $I$ with multiplicity 2, so our assumption that $L \ne I, J$ implies that $I \equiv J$, the desired contradiction.
    
    Thus we must have $X \equiv K$, so $K, L, J$ are collinear. Thus we conclude $J$ lies on line $KL$ for any isogonal conjugates $K, L$. Choosing another pair $(R, S)$ of isogonal conjugates such that no three of $K, L, R, S$ are collinear, let $T = KR \cap LS$ and $U = KS \cap LR$; by \Cref{completeness}, $T$ and $U$ are isogonal conjugates in $\mathcal C$, so $KL, RS, TU$ concur at a single point $J$.
    
    Consider a conic $\mathcal H$ passing through $K, L, R, S$ but not tangent to line $TU$. Then the pole of line $TU$ in $\mathcal H$ is the intersection of $KL$ and $RS$, which is precisely $J$, which lies on line $TU$. For the pole of $TU$ in $\mathcal H$ to lie on $TU$ itself, $TU$ must be tangent to $\mathcal H$ at $J$, the desired contradiction.
\end{proof}

Now, we will construct the tangent to $\mathcal C$ at any non-singular point $X$ as follows.

\begin{theorem}[Tangent to Isogonal Cubic]\label{tangent}
    For isogonal conjugates $X, X'$, the isogonal $\ell$ to $XX'$ in $\angle AXC$ is tangent to $\mathcal C$ at $X$.
\end{theorem}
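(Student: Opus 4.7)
The plan is to prove this by a continuity (limit) argument applied to \Cref{swallow-reversal}. Throughout, we may assume $X$ is non-singular, since the tangent to $\mathcal{C}$ at $X$ is only well-defined there; by \Cref{singular}, this also guarantees $X \not\equiv X'$, so the line $XX'$ makes sense.

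First I would take a variable point $Y \in \mathcal{C}$ close to $X$, with isogonal conjugate $Y'$. By \Cref{swallow-reversal}, applied to the fixed excellent point $X$ and the pair of isogonal conjugates $(Y, Y')$, the pairs of lines $(XY, XY')$ and $(XA, XC)$ are isogonal at $X$. The idea is then to let $Y \to X$ along $\mathcal{C}$: the line $XY$ tends to the tangent $t_X$ to $\mathcal{C}$ at $X$ (by definition of the tangent as the limit of secants), and $Y' \to X'$ by continuity of isogonal conjugation on $\mathcal{C}$ near a non-singular point, hence $XY' \to XX'$. The isogonality relation is a closed condition on ordered pairs of lines through $X$, so in the limit $(t_X, XX')$ and $(XA, XC)$ are isogonal at $X$. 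This is exactly the defining property of the line $\ell$ in the statement, so $t_X = \ell$, as desired.

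The main obstacle is justifying that the isogonal-conjugate map $Y \mapsto Y'$ on $\mathcal{C}$ is continuous at $X$. This is not entirely obvious from the constructive definitions given above, but it can be established cleanly via \Cref{isogonal-parallel}, which describes the isogonal conjugate via the intersection $P_\infty Y \cap \mathcal{C}$ — a rational construction, hence continuous wherever the incident intersections are transverse, which holds at a non-singular $X \not\equiv X'$. Alternatively, one can avoid the limiting step entirely by invoking \Cref{completeness}: for any pair $(Y, Y')$ of isogonal conjugates, the point $XY \cap X'Y'$ lies on $\mathcal{C}$, and sending $Y \to X$ forces $XY \cap X'Y'$ to approach a point of $\mathcal{C}$ on both $t_X$ and the tangent $t_{X'}$ at $X'$ — but the cleanest and most direct route is the one above via \Cref{swallow-reversal}.

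Finally, I would remark that this recipe makes the tangent line constructible by straightedge and compass: given $X \in \mathcal{C}$ and its isogonal conjugate $X'$ (constructible, e.g., through \Cref{isogonal-parallel}), one simply reflects $XX'$ over either bisector of $\angle AXC$ to obtain $\ell$. This constructibility is consistent with the paper's overall goal of performing cubic-curve operations without explicitly intersecting lines with cubics.
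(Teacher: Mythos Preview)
Your argument is correct and is essentially the same as the paper's: both take a variable pair of isogonal conjugates $(Y,Y')$, invoke \Cref{swallow-reversal} to get that $(XY,XY')$ is isogonal to $(XA,XC)$, and pass to a limit so that one of $XY,XY'$ becomes the tangent at $X$ and the other becomes $XX'$. The only cosmetic difference is that you send $Y\to X$ while the paper sends $Y\to X'$ (equivalently $Y'\to X$), which are interchangeable since isogonal conjugation is an involution; your added justification of continuity via \Cref{isogonal-parallel} is a welcome refinement the paper leaves implicit.
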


\begin{proof}
    As we move a point $Y \in \mathcal C$ with isogonal conjugate $Y'$, $XY$ and $XY'$ are isogonal in $\angle AXC$. Therefore, as $Y$ approaches $X'$, $Y'$ will approach $X$, eventually letting $XY'$ intersect $\mathcal C$ with multiplicity $2$ at $X$.
\end{proof}

In particular, the tangent to $\mathcal C$ at the point of infinity along $MN$ is given by the unique (by \Cref{truth}) asymptote of $\mathcal C$.

\begin{theorem}\label{tangents-isogonal}
    Let the tangents to $\mathcal C$ at isogonal conjugates $X, X'$ meet at $Y$, and let $XX'$ meet $\mathcal C$ at $Z \ne X, X'$. Then $Y, Z$ are isogonal conjugates.
\end{theorem}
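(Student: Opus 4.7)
The plan is to derive this as a limiting case of the Quadrilateral Completeness Corollary (\Cref{completeness}), letting a second pair of isogonal conjugates converge to $(X, X')$.

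First I would set up a smooth one-parameter family of isogonal conjugate pairs $(U_t, U'_t)$ on $\mathcal C$ with $U_0 = X$, $U'_0 = X'$, and $U_t \ne X$ for $t \ne 0$. Such a family exists because $X, X'$ are nonsingular on $\mathcal C$ (by \Cref{singular}, using that they are distinct and have well-defined tangents), so $\mathcal C$ is a smooth arc near each and isogonal conjugation is a continuous involution on the nonsingular locus sending $X$ to $X'$. For each small $t \ne 0$, one applies \Cref{completeness} to $(X, X'), (U_t, U'_t)$ to obtain two points on $\mathcal C$:
$$P_1(t) = XU_t \cap X'U'_t, \qquad P_2(t) = XU'_t \cap X'U_t.$$
By the Dual of Desargues' involution argument applied at each of $A, B, C, D$ to the complete quadrilateral $XU_tX'U'_t$ (as in the derivation preceding \Cref{completeness}), $P_1(t)$ and $P_2(t)$ are isogonal conjugates in $ABCD$.

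The key step is then to identify the limits as $t \to 0$. The chord $XU_t$ limits to the tangent $\ell_X$ at $X$ and $X'U'_t$ to the tangent $\ell_{X'}$ at $X'$, so $P_1(t) \to \ell_X \cap \ell_{X'} = Y$. Meanwhile $P_2(t)$ is by construction the third intersection of the line $XU'_t$ with $\mathcal C$ (distinct from $X$ and $U'_t$), and since $XU'_t \to XX'$, whose three intersections with $\mathcal C$ are exactly $\{X, X', Z\}$, while $U'_t \to X'$, continuity of the third intersection forces $P_2(t) \to Z$. Since isogonal conjugation is continuous at any nonsingular point of $\mathcal C$, passing the relation ``$P_2(t)$ is the isogonal conjugate of $P_1(t)$'' to the limit gives that $Z$ is the isogonal conjugate of $Y$ provided $Y$ is nonsingular. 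In the edge case where $Y$ is a singular point, \Cref{singular} gives that $Y$ is its own conjugate and the limiting argument forces $Z = Y$, which remains consistent with the conclusion.

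I expect the main obstacle to lie in carefully justifying the continuity claims, especially the continuity of the ``third intersection'' of a variable line with $\mathcal C$ across potential degenerations where two of the three roots can momentarily collide. This is handled by embedding in $\mathbb{CP}^2$ and observing that the intersections of a smoothly varying line with $\mathcal C$ are the roots of a smoothly varying monic cubic polynomial and hence depend continuously on the coefficients; this suffices here because the three intersections of $XX'$ with $\mathcal C$ are distinct by hypothesis, so the three continuous branches do not cross near $t = 0$.
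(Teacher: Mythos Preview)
Your argument is correct, but it takes a different route from the paper's. The paper argues directly: letting $Z^*$ be the isogonal conjugate of $Z$, \Cref{swallow-reversal} gives that $(XZ, XZ^*)$ are isogonal in $\angle AXC$; since $XZ = XX'$, \Cref{tangent} identifies $XZ^*$ with the tangent at $X$, and likewise $X'Z^*$ with the tangent at $X'$, whence $Z^* = Y$. This is a two-line proof once \Cref{tangent} is in hand. Your approach instead bypasses \Cref{tangent} entirely, obtaining both the tangent identification and the isogonality of $Y, Z$ in one limiting step from \Cref{completeness} (strengthened by the DDIT observation that the two diagonal points $P_1(t), P_2(t)$ are themselves isogonal conjugates, a fact the paper does not isolate as a standalone statement). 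Your route is more self-contained but costs you the continuity bookkeeping; the paper's route is shorter because the limiting work has already been absorbed into the proof of \Cref{tangent}. One small remark: your resolution of the indeterminate form for $P_2(t)$---both defining lines collapse to $XX'$---via reinterpreting $P_2(t)$ as the third intersection of $XU'_t$ with $\mathcal C$ and tracking the three continuous root branches is exactly the right fix, and your hypothesis $Z \ne X, X'$ is what guarantees the branches stay separated at $t = 0$.
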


\begin{proof}
    Let $Z^*$ be the isogonal conjugate of $Z$. By \Cref{reversal} $(XZ, XZ^*)$ are isogonal in $\angle AXC$, and $(X'Z, X'Z^*)$ are isogonal in $\angle AX'C$, so $Z^* \equiv Y$, as desired.
\end{proof}

\begin{lemma}
    The bisectors of $\angle AZC$ are perpendicular and parallel to $XX'$.
\end{lemma}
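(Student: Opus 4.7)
The plan is to invoke \Cref{swallow-reversal} with the isogonal conjugate pair $(X, X')$ and with the excellent point $Z \in \mathcal{C}$. That corollary then asserts that the pair of lines $(ZX, ZX')$ is isogonal to the pair $(ZA, ZC)$. The crucial observation is that since $Z$ lies on $XX'$ by construction, the two lines $ZX$ and $ZX'$ coincide: each is the single line $XX'$. So the content of the corollary reduces to the statement that the degenerate pair $(XX', XX')$ through $Z$ is isogonal to $(ZA, ZC)$.

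Next, I would unpack what isogonality means when one of the pairs collapses to a single line counted twice. The shared angle bisectors of $(XX', XX')$ at $Z$ are $XX'$ itself (as the external bisector of a line with itself) and the line through $Z$ perpendicular to $XX'$ (as the internal bisector). Equating these with the angle bisectors of $\angle AZC$ at $Z$ yields exactly the two desired directions: one bisector is parallel to $XX'$ (indeed, is $XX'$), and the other is perpendicular to $XX'$.

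The only subtle point is to justify, consistently with the paper's mod-$180^\circ$ directed angle convention, that the bisector pair of a coincident line pair consists of that line and its perpendicular; this is immediate from viewing the coincident pair as the limit of an ordinary pair of lines through $Z$ whose directions collapse to a single one, which forces their bisectors to collapse onto the line itself and its orthogonal complement. Once this point is granted, the proof is a direct invocation of \Cref{swallow-reversal} with no further computation.
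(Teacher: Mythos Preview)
Your proposal is correct and follows exactly the paper's approach: the paper's one-line proof ``Examining isogonal conjugates $(A, C), (X, X')$, this follows from \Cref{swallow-reversal}'' is precisely the invocation you describe, with $Z$ as the excellent point and $(X,X')$ as the isogonal pair so that $(ZX,ZX')$ collapses to the single line $XX'$. Your additional remarks about why the bisector pair of a coincident line pair is that line together with its perpendicular simply make explicit what the paper leaves implicit.
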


\begin{figure}[!htbp]\centering
    \includegraphics[width=350pt]{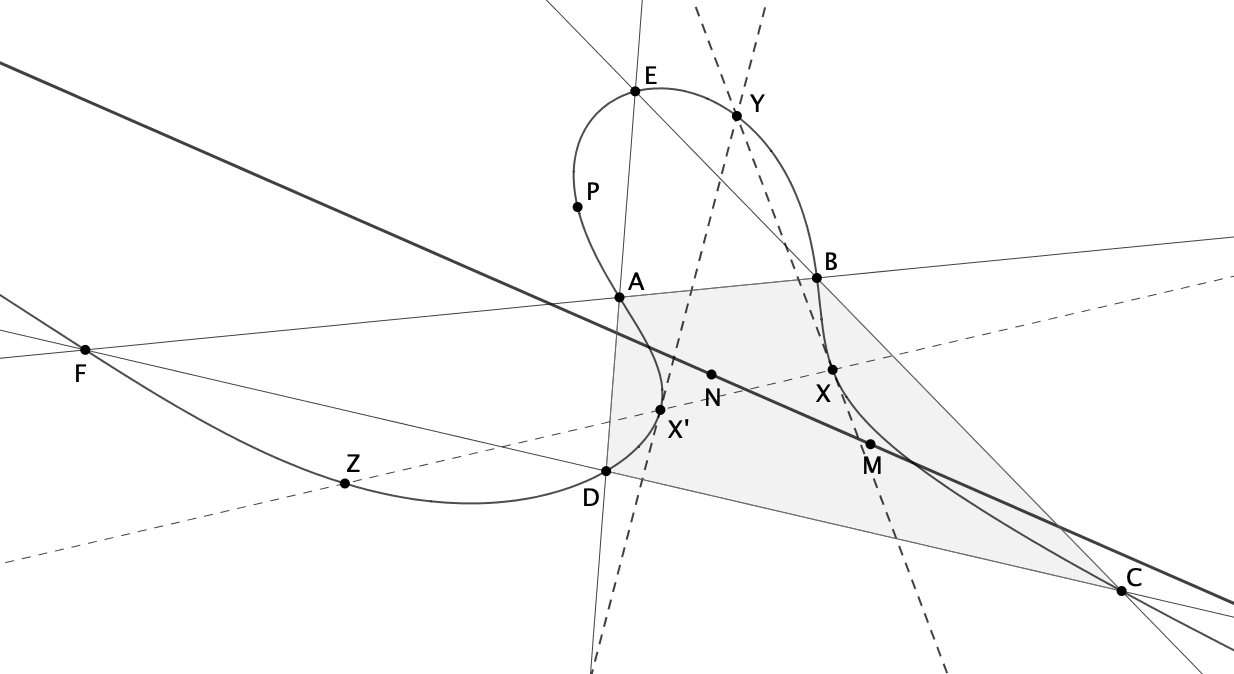}
    \caption{Tangents to the Cubic}
    \label{fig:3.1}
\end{figure}

\begin{proof}
    Examining isogonal conjugates $(A, C), (X, X')$, this follows from \Cref{swallow-reversal}.
\end{proof}

\begin{theorem}
    Let $PZ$ meet $\mathcal C$ at $W \ne Z$. Then $WX = WX'$.
\end{theorem}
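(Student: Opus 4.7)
The plan is to prove that the line $\ell$ through $W$ perpendicular to $XX'$ is an angle bisector of the pair $(WX, WX')$; by a direct reflection argument this forces $W$ onto the perpendicular bisector of $XX'$, whence $WX = WX'$. The three ingredients I will use are already in place: \Cref{rho-aias} (transport of bisector directions between points on $\mathcal C$ collinear with $P$), the lemma just proved (which pins down the bisector directions at $Z$), and \Cref{swallow-reversal} (which converts bisectors of $\angle AWC$ into bisectors of $\angle XWX'$).

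First, since $W, Z \in \mathcal C$ with line $WZ$ passing through $P$, \Cref{rho-aias} gives that the bisectors of $\angle AWC$ and of $\angle AZC$ are parallel in direction. By the lemma just proved, the bisectors of $\angle AZC$ are parallel and perpendicular to $XX'$, so the bisectors of $\angle AWC$ at $W$ are precisely the line $m$ through $W$ parallel to $XX'$ and the line $\ell$ through $W$ perpendicular to $XX'$. Applying \Cref{swallow-reversal} to the isogonal pair $(X, X')$ and the excellent point $W$ shows that $(WX, WX')$ and $(WA, WC)$ share their pair of angle bisectors, so $m$ and $\ell$ also bisect the pair $(WX, WX')$.

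Now let $\sigma$ be reflection across $\ell$. Because $\ell$ passes through $W$ and bisects $(WX, WX')$, the map $\sigma$ fixes $W$ and swaps the lines $WX$ and $WX'$; because $\ell \perp XX'$, $\sigma$ also sends the line $XX'$ to itself. Note $W \notin XX'$: otherwise $W \in PZ \cap XX' = \{Z\}$, contradicting $W \ne Z$ (barring the degenerate $Z = P$, which is excluded). Hence $WX' \ne XX'$, so $WX' \cap XX' = \{X'\}$, which forces $\sigma(X) \in \sigma(WX) \cap \sigma(XX') = WX' \cap XX' = \{X'\}$, i.e., $\sigma(X) = X'$. Since $\sigma$ is an isometry fixing $W$, we conclude $WX = W\sigma(X) = WX'$.

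The only slightly delicate step is the nondegeneracy $W \notin XX'$, needed to rule out the collapse of $WX' \cap XX'$ and to ensure $\sigma$ is doing something nontrivial; every other step is an immediate application of the preceding lemmas, so the main obstacle is purely organizational rather than conceptual.
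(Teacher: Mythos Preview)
Your proof is correct and follows essentially the same approach as the paper: use \Cref{rho-aias} on $W,Z$ together with the preceding lemma to deduce that the bisectors of $\angle AWC$ (hence, via \Cref{swallow-reversal}, of $\angle XWX'$) are parallel and perpendicular to $XX'$, from which $WX=WX'$ follows. The paper compresses this into a single sentence, whereas you spell out the invocation of \Cref{swallow-reversal}, the reflection argument, and the nondegeneracy $W\notin XX'$ explicitly; these are welcome clarifications but do not constitute a different route.
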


\begin{proof}
    By \Cref{rho-aias}, the bisectors of $XWX'$ are perpendicular and parallel to $XX'$, which gives the desired result.
\end{proof}

\begin{corollary}
    If we denote $P$ by $0$ on the conic, then $W = X + X'$ under cubic addition (\cite{ref:Fulton}, Proposition 5.6.4). Thus, the cubic sum of any two isogonal conjugates $X, X'$ is equidistant from $X, X'$.
\end{corollary}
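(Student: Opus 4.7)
The plan is to observe that the construction of $W$ in the preceding theorem is, tautologically, the chord–chord group law on $\mathcal{C}$ applied to $X$ and $X'$ when $P$ is declared to be the identity; once this identification is made, equidistance is just a restatement of the preceding theorem.

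First I would check that $P$ is actually an admissible choice of identity for the group law, i.e.\ that $P$ is a non-singular point of $\mathcal{C}$. By \Cref{singular}, a point of $\mathcal{C}$ is singular precisely when it is its own isogonal conjugate. But by \Cref{infinity}, the isogonal conjugate of $P$ is the point at infinity along the Newton–Gauss line $MN$, which is a point at infinity and hence distinct from $P \in \mathbb{R}^2$. So $P$ is smooth, and the chord–chord construction of \cite{ref:Fulton}, Proposition 5.6.4, gives a well-defined abelian group law on the smooth locus of $\mathcal{C}$ with $P$ as identity.

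Next I would unwind the definition of $+$: for $U, V \in \mathcal{C}$, one sets $K := UV \cap \mathcal{C}$ (the third intersection, per the earlier intersection convention) and then $U + V := PK \cap \mathcal{C}$. Applying this with $U = X$ and $V = X'$, the preceding theorem's labeling gives $K = XX' \cap \mathcal{C} = Z$, and the construction of $W$ in that theorem reads $W = PZ \cap \mathcal{C}$. Hence $W = X + X'$ as claimed. The equidistance statement $WX = WX'$ is the content of the preceding theorem, and substituting $W = X + X'$ finishes the second assertion.

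The proof really is just a matching of definitions; the only step that requires genuine input is the smoothness of $P$, handled via \Cref{singular} and \Cref{infinity}. No routine calculation is needed, and no new geometric content beyond the preceding theorem is introduced.
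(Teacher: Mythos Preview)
Your proposal is correct and matches the paper's approach: the paper states this corollary without proof, treating it as an immediate unwinding of the chord--chord group law together with the preceding theorem, which is exactly what you do. Your explicit verification that $P$ is non-singular (via \Cref{singular} and \Cref{infinity}) is a welcome addition that the paper omits.
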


We thus obtain the following construction, if we desire to find all pairs of isogonal conjugates $(Y, Y')$ such that $YY'$ passes through a given excellent point $X$.

\begin{theorem}
    For $X \in \mathcal C$, let distinct $Y, Z \ne X$ lie on $\mathcal C$ such that $X, Y, Z$ are collinear and $XY$ bisects $\angle AXC$. Then $Y, Z$ are isogonal conjugates.
\end{theorem}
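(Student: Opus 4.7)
The plan is to let $Y^{\ast}$ denote the isogonal conjugate of $Y$ in $\mathcal{C}$ (which exists since $Y \in \mathcal{C}$ and so $Y$ is excellent) and to prove that $Y^{\ast}=Z$, which is exactly the stated conclusion. The argument proceeds in two steps: first, pin $Y^{\ast}$ onto the line $\ell := XYZ$; second, eliminate the only alternative possibility $Y^{\ast}=X$.

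For the first step, I would apply \Cref{swallow-reversal} to the isogonal conjugate pair $(Y, Y^{\ast})$ together with the excellent point $X$. This gives that the pairs $(XY, XY^{\ast})$ and $(XA, XC)$ are isogonal, so by definition they share the same (perpendicular) pair of angle bisectors through $X$. The hypothesis says $\ell = XY$ is a bisector of $\angle AXC$, so $\ell$ is also a bisector of the pair $(XY, XY^{\ast})$. Since one of the two lines of this pair is already $\ell$ itself, and any line through $X$ is fixed by reflection across itself or across its perpendicular through $X$, we must have $XY^{\ast} = \ell$. Thus $Y^{\ast} \in \ell \cap \mathcal{C} = \{X, Y, Z\}$.

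For the second step, first observe that $Y$ cannot be singular: by \Cref{singular} a singular point equals its own isogonal conjugate, and any line through a singular point of a cubic has intersection multiplicity at least $2$ there, leaving at most one further intersection, which is incompatible with the two distinct points $X, Z$ of $\ell \cap \mathcal{C}$ besides $Y$. Hence $Y \neq Y^{\ast}$, so $Y^{\ast} \in \{X, Z\}$. Suppose for contradiction $Y^{\ast} = X$. Then $Y = X^{\ast}$, so $\ell = XX^{\ast}$, and \Cref{tangent} identifies the tangent to $\mathcal{C}$ at $X$ as the isogonal of $\ell$ in $\angle AXC$. But $\ell$ is itself a bisector of $\angle AXC$, so its isogonal is $\ell$; thus $\ell$ would be tangent to $\mathcal{C}$ at $X$, meaning $X$ is an intersection of $\ell$ with $\mathcal{C}$ of multiplicity at least $2$. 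This contradicts the fact that $\ell \cap \mathcal{C}$ consists of the three distinct points $X, Y, Z$. Therefore $Y^{\ast} = Z$, and $Y, Z$ are isogonal conjugates.

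The main step is the invocation of \Cref{swallow-reversal}, which converts the bisector hypothesis at $X$ into the substitution-like statement that $\ell$ bisects $\angle Y X Y^{\ast}$; once that is in hand, the only real work left is ruling out $Y^{\ast} = X$, which is the genuine obstacle — it corresponds geometrically to $\ell$ being tangent to $\mathcal{C}$ at $X$, and is closed off cleanly using \Cref{tangent} together with the multiplicity count forced by the three distinct collinear intersections.
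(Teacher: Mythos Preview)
Your proof is correct and follows essentially the same route as the paper: invoke \Cref{swallow-reversal} to force $Y^{\ast}$ onto the line $\ell$, then use \Cref{singular} together with a multiplicity count to exclude $Y^{\ast}=Y$.

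The one genuine difference is that you explicitly treat the case $Y^{\ast}=X$, which the paper's proof silently skips (it jumps from ``$XY'$ and $XY$ are the same line'' to ``either $Y\equiv Y'$ or $Z\equiv Y'$'' without saying why $Y'\equiv X$ is impossible). Your use of \Cref{tangent} here is clean: if $Y^{\ast}=X$ then $\ell=XX^{\ast}$, and since $\ell$ is its own isogonal in $\angle AXC$, \Cref{tangent} makes $\ell$ tangent to $\mathcal C$ at $X$, contradicting the three distinct intersections. One small point you left implicit is that \Cref{tangent} needs $X$ non-singular, but this is automatic: if $X$ were singular then $X=X^{\ast}=Y^{\ast\ast}=Y$, contrary to hypothesis. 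So your argument in fact patches a minor gap in the paper's presentation.
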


\begin{proof}
    Let $Y'$ be the isogonal conjugate of $Y$; then by \Cref{swallow-reversal}, $(XY, XY')$ and $(XA, XC)$ are isogonal. Since $XY$ bisects $\angle AXC$, $XY'$ and $XY$ must be the same line, implying that either $Y \equiv Y'$ or $Z \equiv Y'$.
    
    In the latter case we are done. In the former case, $Y$ must be an incenter or excenter of $ABCD$, so by \Cref{singular} $Y$ is a singular point. But $X, Y, Z$ are collinear and distinct despite line $XYZ$ intersecting $Y$ with multiplicity 2, the desired contradiction.
\end{proof}

\begin{figure}[!htbp]\centering
    \includegraphics[width=450pt]{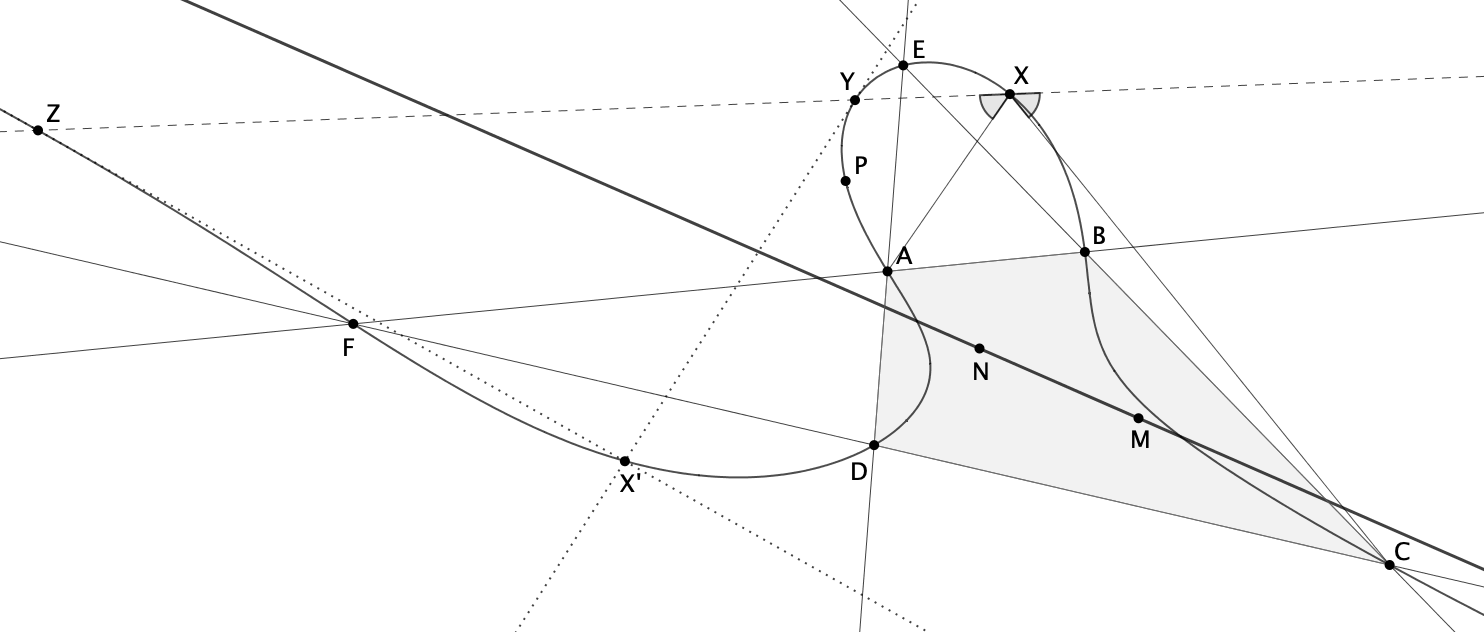}
    \caption{Isogonal Conjugates Collinear with a Given Point}
    \label{fig:3.2}
\end{figure}

Note that this also gives us a construction of points $Y$ on $\mathcal C$ such that $ZY$ is tangent to $\mathcal C$ at $Y$, where $Z$ is a fixed point on $\mathcal C$. This is done by letting $X$ be the isogonal conjugate of $Z$ and intersecting the angle bisectors of $\angle AXC$ with $\mathcal C$. By the above, there will be up to four such intersections $X_1, X_2, X_3, X_4$ on $\mathcal C$ for which $ZX_1, ZX_2, ZX_3, ZX_4$ are tangent to $\mathcal C$ at $X_1, X_2, X_3, X_4$.

\section{Constructing Intersections with Lines and Circles}

\begin{theorem}[Line Intersection]
    Consider excellent points $X, Y$. Denote by $Z$ the intersection of the reflections of $XY$ over the bisectors of $\angle AXC$ and $\angle AYC$. Then the intersection of $XY$ with $\mathcal C$ other than $X, Y$ is also the isogonal conjugate of $Z$. Furthermore, $PXYZ$ is cyclic.
\end{theorem}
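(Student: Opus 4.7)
The plan is to identify $Z$ explicitly with the isogonal conjugate $W'$ of the third intersection $W = XY \cap \mathcal{C}$, and then to derive the cyclic claim by a short directed-angle chase that rests on \Cref{swallow-reversal} applied to two different isogonal pairs.

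First, to identify $Z$ with $W'$: \Cref{swallow-reversal} applied at the excellent point $X$ to the isogonal pair $(W, W')$ says that $(XW, XW')$ is isogonal to $(XA, XC)$. Since $XW$ is just $XY$, this forces $XW'$ to be the reflection of $XY$ over the bisector of $\angle AXC$. Applying the same corollary at $Y$ to $(W, W')$ shows $YW'$ is the reflection of $YX$ over the bisector of $\angle AYC$. By the very definition of $Z$, these two reflected lines meet at $Z$, so $Z = W'$.

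For the cyclic claim, I would invoke \Cref{infinity} to note that $(P, P_\infty)$ is itself an isogonal pair, where $P_\infty$ is the point at infinity along $MN$. Applying \Cref{swallow-reversal} at $X$ to $(P, P_\infty)$ gives that $(XP, XP_\infty)$ is also isogonal to $(XA, XC)$; combined with the previous pair, reflection over the bisector of $\angle AXC$ sends $XP \mapsto XP_\infty$ and $XY \mapsto XZ$ simultaneously. Because reflections negate directed angles between lines (mod $180^\circ$),
\[\angle(XP, XY) = \angle(XZ, XP_\infty).\]
The analogous reflection at $Y$ yields $\angle(YP, YX) = \angle(YZ, YP_\infty)$. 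Since $XP_\infty \parallel YP_\infty$ along $MN$, directed angles to either line agree; subtracting these two equalities, with standard chain-rule manipulations, collapses the ``$P_\infty$'' terms on the right and the collinear ``$XY$'' terms on the left, leaving
\[\angle(PX, PY) = \angle(ZX, ZY),\]
which is the directed-angle criterion for $P, X, Y, Z$ to be concyclic.

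The only real obstacle is sign/convention care: verifying that a reflection negates directed angles mod $180^\circ$ (so the isogonal relation transfers as an equality rather than a disguised inequality), and handling the degenerate configurations — $W \in \{X, Y\}$, line $XY$ tangent to $\mathcal C$ at one of them, or the two reflected lines being parallel so that $Z$ lies at infinity — via the third-intersection conventions laid out in the introduction.
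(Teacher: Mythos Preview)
Your identification of $Z$ with $W'$ is exactly the paper's argument. For the cyclic part, however, you take a genuinely different route. The paper lets $X', Y'$ be the isogonal conjugates of $X, Y$, invokes \Cref{completeness} to place $W$ on line $X'Y'$, and then observes that spiral inversion (the spiral similarity centered at $P$) carries the line $X'Y'W$ to a circle through $X, Y, Z$ which, being the image of a line under a spiral similarity, automatically passes through the center $P$. Your approach instead pairs $(P, P_\infty)$ via \Cref{infinity}, applies \Cref{swallow-reversal} at $X$ and $Y$ to get $\angle(XP, XY) = \angle(XZ, XP_\infty)$ and $\angle(YP, YX) = \angle(YZ, YP_\infty)$, and then cancels using $XY \equiv YX$ and $XP_\infty \parallel YP_\infty$ to obtain $\angle XPY = \angle XZY$. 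Your angle chase is correct (the cancellation works exactly as you describe once one writes directed angles as differences of line-directions), and it has the virtue of staying entirely within the isogonality framework without invoking the spiral-similarity viewpoint or \Cref{completeness}. The paper's argument, on the other hand, is a one-liner once the spiral inversion is in hand and explains \emph{why} the circle appears: it is the spiral image of a line.
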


\begin{proof}
    Let $W = XY \cap \mathcal C$, and let $W$ have isogonal conjugate $W'$. By \Cref{swallow-reversal}, $XW$ and $XW'$ are isogonal in $\angle AXC$, so line $XW'$ is the reflection of $XY$ over the bisectors of $\angle AXC$, implying that $W' \equiv Z$, proving that $XY \cap \mathcal C$ is indeed the isogonal conjugate of $Z$.
    
    To prove $PXYZ$ is cyclic, let $X', Y'$ be the isogonal conjugates of $ABC$. By \Cref{completeness}, $XY \cap \mathcal C$ lies on $X'Y'$, hence $W \in \mathcal X'Y'$. Then under spiral inversion, line $X'Y'W$ is mapped to the circumcircle of $XYZ$, which must pass through $P$, as desired.
\end{proof}

As a direct corollary, we have the following well-known theorem:

\begin{corollary}[Spiral Center of Isogonal Conjugates Lies on Circumcircle]
    For isogonal conjugates $(A, C)$, $(B, D)$ in $\triangle XYZ$, the spiral center of $ABCD$ lies on $(XYZ)$.
\end{corollary}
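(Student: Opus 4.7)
The plan is to read the corollary as an immediate specialization of the preceding Line Intersection Theorem, with the triangle vertices $X, Y, Z$ playing the role of excellent points of the quadrilateral $ABCD$.

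First I would verify that each of $X$, $Y$, $Z$ is excellent with respect to $ABCD$. Since $A, C$ are isogonal conjugates in $\triangle XYZ$, the pair of lines $(XA, XC)$ shares the angle bisectors of $\angle YXZ$; since $B, D$ are likewise isogonal conjugates in $\triangle XYZ$, the pair $(XB, XD)$ shares those same bisectors. Hence $(XA, XC)$ and $(XB, XD)$ admit a common bisector pair, which is exactly the excellence condition at $X$, and the analogous argument works at $Y$ and at $Z$. So $X, Y, Z \in \mathcal C$.

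Next I would apply the Line Intersection Theorem to the excellent points $X$ and $Y$. The bisectors of $\angle AXC$ coincide with the bisectors of $\angle YXZ$, so the reflection of line $XY$ across them is precisely line $XZ$; symmetrically, the reflection of $XY$ across the bisectors of $\angle AYC$ is line $YZ$. These two reflected lines meet at $Z$, so the point denoted ``$Z$'' in the hypothesis of the theorem coincides with the triangle vertex $Z$. The conclusion ``$PXYZ$ is cyclic'' of that theorem is then exactly the statement $P \in (XYZ)$.

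The only step that deserves real attention is the first identification: one must note that the angle bisectors of $\angle AXC$ and $\angle YXZ$ are literally the same pair of lines (not merely isogonally related in some weaker sense), so that the ``reflect across the bisector'' operation in the Line Intersection Theorem genuinely transports side $XY$ onto side $XZ$. Once this is in hand, the corollary is a single invocation of the theorem with no further computation.
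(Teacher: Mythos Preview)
Your proposal is correct and matches the paper's intent: the paper itself offers no separate proof, simply labeling this a ``direct corollary'' of the Line Intersection Theorem, and your argument is precisely the unpacking of that implication. The only content is the identification you already highlighted---that the bisectors of $\angle AXC$ coincide with those of $\angle YXZ$, so that the reflected lines in the theorem's hypothesis are $XZ$ and $YZ$---and you have handled this cleanly.
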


We may remark that this provides a construction of the intersection of $\mathcal C$ with any line $XY$, provided that $X$ and $Y$ lie on $\mathcal C$ themselves. Next, we characterize intersections of $\mathcal C$ with circles.

\begin{theorem}[Circle Intersection]
    Consider excellent points $E, F, G$ with isogonal conjugates $E', F', G'$. Then $(EFG)$ meets $\mathcal C$ at one other point which lies on $(EF'G'), (E'FG'), (E'F'G)$.
\end{theorem}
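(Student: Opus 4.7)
The plan is to exploit the group law on the cubic $\mathcal C$. By B\'ezout's theorem, a circle meets $\mathcal C$ in six points counted with multiplicity, and since every circle passes through the circular points at infinity $I, J$, with $I, J \in \mathcal C_0$ by \Cref{truth}, each of the four circles in the statement meets $\mathcal C$ at $I, J$ together with exactly one additional point beyond the three through which it was defined. The theorem thus reduces to showing that these four ``extra'' sixth points coincide.

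Two ingredients will drive the proof. First, a standard consequence of Cayley--Bacharach (\cite{ref:Fulton}, Section 5.6): if two conics each meet $\mathcal C$ in six points, then the two sextuples have the same sum in the group law on $\mathcal C$, because both divisors are cut out by a degree-$2$ curve and are therefore linearly equivalent. Second, the following key lemma, which I would prove first.

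\textbf{Key Lemma.} Fixing any base point on $\mathcal C$, isogonal conjugation acts on $\mathcal C$ as translation by a single $2$-torsion element $T$ of the group law.

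To prove the lemma, I would appeal to \Cref{completeness}: for two distinct isogonal pairs $(X, X')$ and $(Y, Y')$, the lines $XY$ and $X'Y'$ share their third intersection $U$ with $\mathcal C$, so $X + Y + U = X' + Y' + U$ (three collinear points on a cubic have a fixed group sum), yielding $X + Y = X' + Y'$. Applying this relation to three distinct pairs $(X, X'), (Y, Y'), (Z, Z')$ and combining the three resulting equations gives both $2(X' - X) = 0$ and that $X' - X$ takes the same value $T$ for every pair. With the lemma in hand, Cayley--Bacharach applied to the circles $(EFG)$ and $(EF'G')$ gives
\[E + F + G + H = E + F' + G' + H',\]
and substituting $F' = F + T$, $G' = G + T$ with $2T = 0$ collapses the right side to $E + F + G + H'$, forcing $H = H'$. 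Identical reasoning handles $(E'FG')$ and $(E'F'G)$.

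The main obstacle is the Key Lemma, particularly when $\mathcal C$ is singular. In that case the group law lives on the smooth locus $\mathcal C^{\mathrm{sm}}$ (a $\mathbb G_m$- or $\mathbb G_a$-torsor), and one must verify that the three-pair argument can always be executed within $\mathcal C^{\mathrm{sm}}$, away from the finite set of singular (by \Cref{singular}, self-conjugate) points of $\mathcal C$.
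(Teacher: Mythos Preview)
Your approach is correct and genuinely different from the paper's. The paper argues by a one-line directed-angle chase: using \Cref{reversal} (equivalently \Cref{swallow-reversal}) at the vertices $G$ and $H$, one has $\angle F'GE' = \angle EGF$ and $\angle F'HE' = \angle EHF$, so $EFGH$ cyclic $\iff$ $E'F'GH$ cyclic; the converse direction takes $H := (EFG) \cap (E'F'G)$ and reads the same chain backwards to force $H \in \mathcal C$. No group law, no Cayley--Bacharach, no circular points---just the isogonality relations already established. Your route trades this elementary computation for structure: the Key Lemma that isogonal conjugation is translation by a $2$-torsion element is a clean statement not made explicit in the paper, and once it is in hand the four-circle coincidence drops out of linear equivalence of conic sections, which also transparently explains the symmetry under swapping any subset of $\{E,F,G\}$ with their conjugates. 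The cost is the extra scaffolding you correctly flag: one must work over $\mathbb{CP}^2$, know that $I,J$ are smooth (part (b) of the proof of \Cref{class-real}), and in the singular case observe that the third intersection $U$ in your lemma stays on the smooth locus whenever $X,Y,X',Y'$ do (a line through the node would otherwise meet $\mathcal C$ four times). With those checks, your argument goes through.
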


\begin{proof}
    There are two parts to this. First, we prove that if $H$ is a point on $\mathcal C$ such that $EFGH$ is cyclic, then $H$ lies on $(E'F'G)$ (which would imply that it lies on $(EF'G'), (E'FG')$ by symmetry). To prove this, by \Cref{reversal} with $EGE'G'$, since $H \in \mathcal C$, $\angle F'GE' = \angle EGF = \angle EHF = \angle F'HE'$.
    
    \begin{figure}[!htbp]\centering
        \includegraphics[width=400pt]{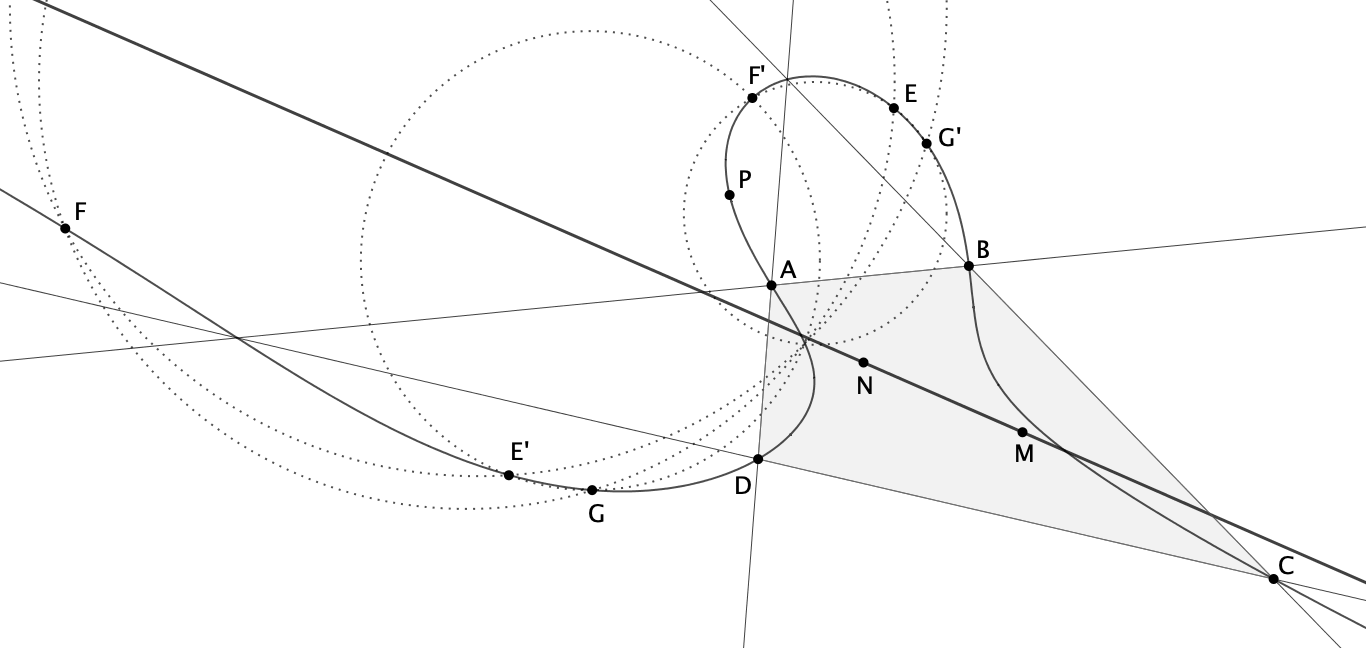}
        \caption{Intersecting with Circles}
        \label{fig:4.1}
    \end{figure}
    
    Next, we prove that if $H \equiv (EFG) \cap (E'F'G)$, then $\angle F'HE' = \angle F'GE' = \angle EGF = \angle EHF$, which implies that $E \in \mathcal C$ as desired.
\end{proof}

We may remark that this provides a construction for all points on $(EFG)$ lying on $\mathcal C$, provided $E, F, G$ lie on $\mathcal C$ themselves. The following is in fact true.

\begin{theorem}
    All circles intersect $\mathcal C$ in the real plane at at most 4 points.
\end{theorem}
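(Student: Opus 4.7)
The plan is to apply Bezout's theorem in $\mathbb{CP}^2$ together with the fact, already used in the proof of \Cref{truth}, that the circular points at infinity $I, J$ lie on $\mathcal C_0$.

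First I would embed $\mathcal C$ as $\mathcal C_0$ in $\mathbb{CP}^2$ and embed the given circle as a projective conic $\mathcal K$. Every circle in $\mathbb R^2$ passes through both circular points at infinity $I = (1 : i : 0)$ and $J = (1 : -i : 0)$ when written in homogeneous coordinates, so $I, J \in \mathcal K$. As noted in the proof of \Cref{truth}, the cubic $\mathcal C_0$ also contains $I$ and $J$ (they are fixed points of isogonal conjugation). Hence $I, J$ are common to $\mathcal C_0$ and $\mathcal K$.

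Next I would invoke Bezout's theorem: provided $\mathcal C_0$ and $\mathcal K$ share no common component, their intersection in $\mathbb{CP}^2$ consists of exactly $3 \cdot 2 = 6$ points counted with multiplicity. Since $\mathcal C$ is assumed non-degenerate (irreducible of degree $3$) and $\mathcal K$ has degree $2$, they cannot share a component, so Bezout applies. The intersection contributes at least two points at $I$ and $J$, leaving at most $6 - 2 = 4$ remaining intersection points (counted with multiplicity) in $\mathbb{CP}^2$. Any real intersection point in $\mathbb R^2 \subset \mathbb{RP}^2 \subset \mathbb{CP}^2$ must come from this residual set, since $I, J$ are not real. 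Therefore $\mathcal C \cap \mathcal K$ has at most $4$ points in $\mathbb R^2$, completing the proof.

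The only subtlety, and therefore the main obstacle, is verifying that the two points $I$ and $J$ each account for at least one intersection of $\mathcal C_0$ and $\mathcal K$ under Bezout's count: that is, that $\mathcal C_0$ and $\mathcal K$ really do intersect transversally enough for the bookkeeping ``$6 - 2 = 4$'' to be valid. This is fine as a multiplicity bound: intersection multiplicities are at least $1$ at $I$ and at $J$, so the residual degree is at most $4$ whether or not $\mathcal K$ is tangent to $\mathcal C_0$ at $I$ or $J$ (any extra tangency only decreases the number of other intersections). No further argument is required.
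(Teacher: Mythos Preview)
Your proof is correct and follows essentially the same approach as the paper: embed in $\mathbb{CP}^2$, use that both the circle and $\mathcal C_0$ pass through the circular points at infinity, and apply Bezout to bound the remaining intersections by $6 - 2 = 4$. You give more detail than the paper (noting irreducibility to rule out common components and explaining the multiplicity bookkeeping), but the argument is the same.
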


\begin{proof}
    The circular points at infinity lie on $\mathcal C$ by virtue of being isogonal conjugates. The result follows from Bezout's Theorem, where curves of degree 2 and 3 meet for at most six points in $\mathbb{CP}^2$.
\end{proof}

\section{Characterizing the Isogonal Cubic}

For this section, we will work in $\mathbb{CP}^2$ and let $I, J$ denote the circular points at infinity. We must first extend the definition of isogonality to $\mathbb{CP}^2$ as follows:

\begin{definition}
    For distinct points $P, A, B, C, D \in \mathbb{CP}^2$, we call the two pairs of lines $(PA, PB)$ and $(PC, PD)$ \textit{isogonal} if and only if the three pairs of lines
    \[(PA, PB), \quad\quad (PC, PD), \quad\quad (PI, PJ)\]
    comprise a single involution, where $I, J$ are the circular points at infinity.
\end{definition}

One can check this complies with the angular definition of isogonality if $P, A, B, C, D \in \mathbb{RP}^2$.

\begin{corollary}
    For distinct points $A, B, C, D$ such that neither of $I, J$ lie on any of the lines $AB, BC, CD, DA$, the locus of points $X$ for which $(XA, XB), (XC, XD)$ are isogonal is a cubic (or curve of lesser degree) through $A, B, C, D, I, J$ in $\mathbb{CP}^2$.
\end{corollary}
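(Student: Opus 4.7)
The plan is to translate the involution condition into a polynomial equation in the affine coordinates of $X$, bound its degree by direct expansion, and verify the six incidences. The classical criterion states that three pairs $(\alpha_k, \beta_k)_{k=1,2,3}$ in a pencil are in involution iff
\[
\det\begin{pmatrix} \alpha_1 \beta_1 & \alpha_1 + \beta_1 & 1 \\ \alpha_2 \beta_2 & \alpha_2 + \beta_2 & 1 \\ \alpha_3 \beta_3 & \alpha_3 + \beta_3 & 1 \end{pmatrix} = 0.
\]
Parameterize the pencil at $X = (x, y)$ by slope, so $XA, XB, XC, XD$ correspond to $\mu_A = (a_2 - y)/(a_1 - x)$, etc., and $(XI, XJ)$ corresponds to $(i, -i)$. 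Using $\alpha_3 \beta_3 = 1$ and $\alpha_3 + \beta_3 = 0$, the determinant collapses to the clean identity
\[
(\mu_A + \mu_B)(1 - \mu_C \mu_D) \;=\; (\mu_C + \mu_D)(1 - \mu_A \mu_B).
\]

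Next I would clear denominators by multiplying through by $(a_1-x)(b_1-x)(c_1-x)(d_1-x)$. Setting
\[
P_{AB} := (a_2-y)(b_1-x) + (b_2-y)(a_1-x), \qquad Q_{AB} := (a_1-x)(b_1-x) - (a_2-y)(b_2-y),
\]
and defining $P_{CD}, Q_{CD}$ analogously, the isogonality condition becomes the polynomial equation
\[
F(x,y) \;:=\; P_{AB}\,Q_{CD} \;-\; P_{CD}\,Q_{AB} \;=\; 0.
\]
A priori $F$ has degree $4$, but the leading parts of $P_\bullet$ and $Q_\bullet$ are $2xy$ and $x^2 - y^2$ respectively, so the degree-$4$ contributions on the two sides both equal $2xy(x^2-y^2)$ and cancel. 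Hence the locus is cut out by a polynomial of degree at most $3$.

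For the incidences, I would observe that substituting $(x,y) = (a_1, a_2)$ zeros both $a_1 - x$ and $a_2 - y$, and every monomial appearing in the expansion of $F$ contains one such factor, so $F(A) = 0$; the same argument gives $B, C, D \in \mathcal{C}$. For the circular points at infinity, a direct count of the degree-$3$ coefficients (the four monomials $x^3, x^2y, xy^2, y^3$) yields
\[
F_3(X, Y) = (X^2 + Y^2)\bigl[(c_2 + d_2 - a_2 - b_2)X + (a_1 + b_1 - c_1 - d_1)Y\bigr],
\]
so $F_3(1, \pm i) = 0$, placing $I = [1:i:0]$ and $J = [1:-i:0]$ on the projective closure of $\mathcal{C}$.

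The main obstacle is the coefficient bookkeeping behind both the degree-$4$ cancellation and the factorization $X^2 + Y^2 \mid F_3$; both seem to require explicit expansion rather than a purely conceptual shortcut. The hypothesis that neither $I$ nor $J$ lies on $AB, BC, CD, DA$ is precisely what prevents these sides from being isotropic, which in turn guarantees the slopes $\mu_A, \ldots, \mu_D$ are well-defined at generic $X$ and that clearing denominators does not trim off an entire component of the locus; without this, $F$ could acquire a spurious linear factor from an isotropic side and drop the degree below $3$, producing the "curve of lesser degree" the statement explicitly allows.
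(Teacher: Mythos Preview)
Your computation is correct and gives a genuinely different proof from the paper's. The paper simply invokes the classical fact that for any six points $A,B,C,D,E,F$ the locus of $X$ for which $(XA,XB),(XC,XD),(XE,XF)$ form an involution is a cubic through those six points, and then specializes $E,F$ to $I,J$. You instead derive this from scratch: the determinantal involution criterion, the slope parametrization with $(\alpha_3,\beta_3)=(i,-i)$, the cancellation of the degree-$4$ term $2xy(x^2-y^2)$, and the explicit factorization $F_3=(X^2+Y^2)(uX+vY)$ all check out. What your route buys is an explicit equation and, in particular, the leading form $F_3$, which the paper only recovers later in Section~8; what the paper's route buys is a coordinate-free argument that avoids the bookkeeping you flag as the main obstacle.

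Two small corrections, neither fatal. First, your justification that $F(A)=0$ is not literally that ``every monomial contains a factor $(a_1-x)$ or $(a_2-y)$''; rather, $P_{AB}$ and $Q_{AB}$ each vanish at $A$, so both summands of $F$ die. Second, your reading of the hypothesis is off: the slopes $\mu_A,\dots,\mu_D$ are well-defined at generic $X$ regardless of whether the sides are isotropic, and the denominators you clear are the \emph{vertical}-line factors $(a_1-x),\dots$, not isotropic ones. The ``curve of lesser degree'' case in your own formula occurs exactly when $u=v=0$, i.e.\ when $A+B=C+D$ as vectors, which is unrelated to isotropic sides. The actual role of the hypothesis (as the paper's next corollary shows) is that if, say, $I\in AB$, then the whole line $AB$ falls into the locus and the cubic degenerates---but that does not interfere with the degree bound or the incidences you establish here.
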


\begin{proof}
    For any four points $A, B, C, D, E, F$, the locus of points $X$ for which $(XA, XB)$, $(XC, XD)$, $(XE, XF)$ comprise a single involution is a cubic through $A, B, C, D, E, F$. Setting $E, F$ as the circular points at infinity gives the desired result.
\end{proof}

Hence we will call a non-degenerate cubic $\mathcal C$ the "isogonal cubic" of quadrilateral $ABCD$ if it is the locus of all points $X$ for which $(XA, XC), (XB, XD)$ are isogonal (using the new definition).

\begin{corollary}[Loci of Isogonality]\label{loci}
    If the locus of points $X$ for which $(XA, XC), (XB, XD)$ are isogonal is a non-degenerate cubic, then neither $I$ nor $J$ cannot lie on any of the lines $AB, BC, CD, DA$.
\end{corollary}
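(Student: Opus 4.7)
My plan is to argue the contrapositive: if $I$ lies on one of the four lines $AB, BC, CD, DA$, then the locus cannot be a non-degenerate cubic. By cycling the labels $A, B, C, D$ (which permutes the four sides while preserving the isogonality locus) and by interchanging the roles of $I$ and $J$ in the definition, it suffices to handle the case $I \in AB$. The strategy is to show that the entire line $AB$ is then contained in the locus, forcing the defining cubic polynomial to have the linear form of $AB$ as a factor and hence to be reducible.

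For the key step, I will use that every line through a finite point and the circular point $I = (1:i:0)$ has slope $i$, so the hypothesis $I \in AB$ forces $AB$ itself to carry slope $i$. Then for any $X \in AB$ distinct from $A, B, I, J$, the three lines $XA, XB, XI$ all coincide with $AB$ and share slope $i$, while $XJ$ has slope $-i$ and $XC, XD$ have some slopes $c_0, d_0$. Parameterizing the pencil at $X$ by slope, the three pairs $(XA, XC), (XB, XD), (XI, XJ)$ become $(i, c_0), (i, d_0), (i, -i)$, and substituting into the standard $3\times3$ determinant whose vanishing expresses that three pairs of points on $\mathbb{CP}^1$ lie in a common involution produces a matrix whose ``first-entry'' column is $(i, i, i)^T$. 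A short row reduction (subtracting row $1$ from rows $2$ and $3$ and expanding along the last column) shows this determinant vanishes identically. Hence $X$ satisfies the algebraic involution condition; since this holds on a Zariski-dense subset of $AB$, the entire line $AB$ is contained in the locus by closure.

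If the locus is cut out by a cubic polynomial $F$, the linear form defining $AB$ must then divide $F$, so $F$ factors as (line) $\times$ (conic) and the cubic is reducible, contradicting the non-degeneracy hypothesis. This finishes the proof for $I \in AB$; the other three sides follow by the symmetry noted above, and the case of $J$ by applying the same argument with $J = (1:-i:0)$ in place of $I$.

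The main obstacle, and the only nontrivial step, is verifying that the algebraic involution condition really does vanish in the degenerate configuration where three of the six concurrent lines coincide. The computation itself is short, but it is the place where the degeneration must be handled carefully: the involution on the pencil is not well-defined geometrically in this case, yet the algebraic condition still vanishes, which is exactly what forces $AB$ into the locus and produces the factorization of the cubic.
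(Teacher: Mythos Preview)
Your proof is correct and follows essentially the same route as the paper's: both argue by contrapositive, reduce to $I \in AB$, observe that for $X$ on $AB$ the three pairs $(XA,XC),(XB,XD),(XI,XJ)$ share the common line $AB$ and hence satisfy the algebraic involution condition (the paper simply calls this a ``single degenerate involution''), and conclude that $AB$ is contained in the locus, forcing the cubic to be reducible. Your version spells out the slope/determinant computation and the Zariski-closure step that the paper leaves implicit, but the underlying argument is the same.
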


\begin{proof}
    Assume the contrary, that WLOG $I \in AB$. Then for any point $P$ on line $AB$, pairs $(XA, XC)$, $(XB, XD)$, $(XI, XJ)$ are part of a single degenerate involution. Thus the locus of points $X$ for which $(XA, XC)$, $(XB, XD)$ are isogonal includes line $AB$, contradicting the proposition that the locus is a non-degenerate cubic.
\end{proof}

In other words, if $ABCD$ has a non-degenerate isogonal cubic $\mathcal C$, then $I$ and $J$ will not lie on $AB, BC, CD, DA$.

Now, the main result of this paper is the following:

\begin{theorem}[Characterization of all Isogonal Cubics]
    Let $\mathcal C$ be a non-degenerate cubic in $\mathbb{CP}^2$ containing circular points at infinity $I$ and $J$ at non-singular points. Then the following two conditions are equivalent:
    \begin{enumerate}
        \item[(1)] There exist non-singular $A, B, C, D \in \mathcal C$ such that $\mathcal C$ is the isogonal cubic of $ABCD$.
        \item[(2)] The tangents to $\mathcal C$ at $I, J$ intersect each other on $\mathcal C$.
    \end{enumerate}
\end{theorem}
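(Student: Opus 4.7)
The plan is to recognize that both conditions become the same statement in the group law on the smooth locus of $\mathcal C$ (with a flex as origin), namely that $t := J - I$ is a $2$-torsion element. Condition (2) is directly this, since the tangents at $I$ and $J$ have third intersections $-2I$ and $-2J$, which coincide on $\mathcal C$ exactly when $2t = 0$.

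For $(1) \Rightarrow (2)$: Suppose $\mathcal C$ is the isogonal cubic of $ABCD$. At each vertex $V \in \{A,B,C,D\}$, the projective involution on the pencil through $V$ determined by $(VB, VD)$ and $(VI, VJ)$ sends $VI$ to $VJ$, so the isogonal conjugate of $I$ in $ABCD$ must lie on all four of $AJ, BJ, CJ, DJ$ and is therefore $J$. Applying \Cref{tangents-isogonal} (extended projectively) to the pair $(I, J)$, the tangents at $I$ and $J$ meet at the isogonal conjugate of $Z := IJ \cap \mathcal C$; by \Cref{truth}, $Z$ is the unique further intersection of the line at infinity with $\mathcal C$, and by \Cref{infinity} its isogonal conjugate is the spiral center $P \in \mathcal C$.

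For $(2) \Rightarrow (1)$: Having $2t = 0$, define $\sigma \colon \mathcal C \to \mathcal C$ by $\sigma(X) = X + t$. This is an involution swapping $I \leftrightarrow J$, and it coincides with the isogonal conjugation map described in \Cref{isogonal-parallel}, namely the composition of the two third-intersection maps through $P$ and through $\infty := IJ \cap \mathcal C$. Choose any non-singular $A \in \mathcal C$ and set $C := \sigma(A)$; then choose a generic non-singular $B \in \mathcal C$ and set $D := \sigma(B)$. Generic choice ensures $A, B, C, D$ are distinct and non-singular, that neither $I$ nor $J$ lies on any of $AB, BC, CD, DA$ (so by \Cref{loci} the isogonal cubic of $ABCD$ is non-degenerate), and that $ABCD$ is neither collinear nor a parallelogram.

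The remaining (and central) step is to show that every non-singular $X \in \mathcal C$ is excellent for $ABCD$ --- that the three pairs $(XA, XC), (XB, XD), (XI, XJ)$ belong to a single involution on the pencil of lines through $X$. Granted this, $\mathcal C$ is contained in the isogonal cubic locus of $ABCD$ (which is a cubic, or of lower degree), and irreducibility of $\mathcal C$ forces the two to be equal. The argument is by descent: the degree-$2$ map $\phi_X \colon \mathcal C \to \mathbb{CP}^1$ sending $Y \mapsto XY$ has fibers $\{Y, Y'\}$ with $X + Y + Y' = 0$, and $2t = 0$ gives $\sigma(Y) + \sigma(Y') = Y + Y'$, so $\sigma$ preserves these fibers and descends to an involution $\iota_X$ of $\mathbb{CP}^1$, which is non-trivial because $X + 2Y + t = 0$ cannot hold identically. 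By construction $\iota_X$ swaps $XA \leftrightarrow XC$, $XB \leftrightarrow XD$, and $XI \leftrightarrow XJ$. The main obstacle I expect is this descent step --- in particular, cleanly handling $X$ where one of the lines $XY$ degenerates (so $\iota_X$ is recovered by closure) and verifying that the finitely many genericity conditions on $B$ can be satisfied simultaneously.
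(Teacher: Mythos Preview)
Your approach through the group law is genuinely different from the paper's. For $(2) \Rightarrow (1)$, the paper builds two complete quadrilaterals $ABCD$, $A'B'C'D'$ inscribed in $\mathcal C$ with $I$ and $J$ as diagonal points (using \Cref{complete-quad}) and then invokes \Cref{involution-locus} to identify $\mathcal C$ with the involution locus of $AA'CC'$. Your descent argument is more conceptual and arguably cleaner: once $2t = 0$, the translation $\sigma$ preserves the fibers of every degree-two projection $\phi_X \colon \mathcal C \to \mathbb{CP}^1$, so it descends to an involution $\iota_X$ on the pencil at $X$, and that is exactly the excellence condition. You should make explicit that $\iota_X$ is a \emph{projective} involution, not merely set-theoretic --- this follows because $\phi_X$ and $\sigma$ are morphisms, hence so is $\iota_X$, and every automorphism of $\mathbb{CP}^1$ is a M\"obius map. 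Also note a small edge case in your identification of (2) with $2t = 0$: if the tangent at $I$ already passes through $J$ (so that $II \cap JJ = J \in \mathcal C$ without $-2I = -2J$), condition (2) holds but your translation setup does not immediately apply; you should check that this case is impossible or still forces $2t = 0$.

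Your $(1) \Rightarrow (2)$ argument, however, has a genuine gap. You cite \Cref{tangents-isogonal}, \Cref{truth}, and \Cref{infinity}, but all of these are proved under the standing hypothesis of Sections~2--5 that we work in $\mathbb{RP}^2$ with real $A, B, C, D$; the present theorem allows $A, B, C, D$ anywhere in $\mathbb{CP}^2$, and notions like the spiral center or the Newton--Gauss line have not been established in that generality. The paper deliberately avoids all of that material in Section~6 and gives a short self-contained argument instead: one checks directly that $X = AI \cap CJ$ and $Y = AJ \cap CI$ lie on $\mathcal C$ (at each such point two of the three pairs $(XA,XC),(XB,XD),(XI,XJ)$ coincide, so the involution condition is automatic), and then \Cref{complete-quad} (Cayley--Bacharach) yields $II \cap JJ \in \mathcal C$. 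In your group-law language this says $A - C = J - I$ and $A - C = I - J$ simultaneously, whence $2t = 0$ --- so the fix is available inside your own framework without the detour through the real lemmas.
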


We begin with the following direct result of Cayley-Bacharach (\cite{ref:Cayley}).

\begin{lemma}[Cubics Containing Complete Quadrilateral]\label{complete-quad}
    For $P, Q$ on non-degenerate cubic $\mathcal C$, consider $T \in \mathcal C$ and let $U = PT \cap \mathcal C$, $V = QT \cap \mathcal C$ such that $P, Q, T, U, V$ are non-singular. Then $PV \cap QU \in \mathcal C$ iff $PP \cap QQ \in \mathcal C$.
\end{lemma}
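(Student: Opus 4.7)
The plan is to apply Cayley--Bacharach to the two degenerate cubics $\Gamma_1 = (PV)(QQ)(PT)$ and $\Gamma_2 = (QU)(PP)(QT)$, each a product of three lines (with $PP$ and $QQ$ the tangents to $\mathcal C$ at $P$ and $Q$). First I would tabulate the nine pairwise intersections $\ell \cap \ell'$ with $\ell$ a line of $\Gamma_1$ and $\ell'$ a line of $\Gamma_2$: these yield $A = PV \cap QU$, $B = PP \cap QQ$, $T = PT \cap QT$, $U = PT \cap QU$, $V = PV \cap QT$, along with $P$ occurring twice (from $PV \cap PP$ and $PT \cap PP$) and $Q$ occurring twice (from $QQ \cap QU$ and $QQ \cap QT$). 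Because $\Gamma_1$ has a node at $P$ whose two branches $PV$, $PT$ are transverse to the smooth branch $PP$ of $\Gamma_2$, the local intersection multiplicity is $I_P(\Gamma_1, \Gamma_2) = 2$, and symmetrically $I_Q = 2$; hence $X := \Gamma_1 \cap \Gamma_2$ is a length-$9$ scheme, consistent with Bezout.

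Next I would verify that $\mathcal C$ contains the length-$7$ subscheme of $X$ supported on $\{P, Q, T, U, V\}$. The simple points $T, U, V$ lie on $\mathcal C$ by hypothesis. For the doubled scheme at $P$, I would work in local coordinates centered at $P$ with the tangent line $PP$ taken as $\{y = 0\}$; the local ideal of $X$ at $P$ then reduces to $(x^2, y)$. Any cubic smooth at $P$ with tangent $\{y = 0\}$ has a local Taylor expansion of the form $c_2 y + c_3 x^2 + \cdots$, which already lies in $(x^2, y)$. Since $PP$ is by definition the tangent to $\mathcal C$ at $P$, this is automatic, and the verification at $Q$ is identical.

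The conclusion then invokes Cayley--Bacharach: the length-$9$ complete intersection of two cubics imposes only $8$ independent conditions on cubics, so any cubic containing a length-$8$ subscheme of $X$ must contain all of $X$. Since $\mathcal C$ already contains length $7$ of $X$, the additional hypothesis $A \in \mathcal C$ upgrades the count to $8$, forcing $B \in \mathcal C$; the reverse implication is symmetric. I expect the main obstacle to be the multiplicity bookkeeping at $P$ and $Q$: one must confirm both that $X$ has the stated length-$2$ local structure at these nodes and that $\mathcal C$, via its tangent line $PP$, absorbs this structure. Once that local computation is carried out, the result is immediate from Cayley--Bacharach.
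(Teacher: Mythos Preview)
Your argument is correct and is essentially the same as the paper's: the paper applies Cayley--Bacharach to the two degenerate cubics $(PP)\cup(QT)\cup(QU)$ and $(QQ)\cup(PT)\cup(PV)$, which are exactly your $\Gamma_2$ and $\Gamma_1$. The paper's proof is a single line and suppresses the multiplicity bookkeeping at $P$ and $Q$; your local computation showing that the intersection scheme there is $(x^2,y)$ and that $\mathcal C$ (having tangent $\{y=0\}$) lies in this ideal is precisely the detail needed to justify that one line.
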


\begin{proof}
    Let $X = PP \cap QQ, Y = PV \cap QU$. Cayley-Bacharach on triples of lines $(XPP, QTV, QUY)$, $(XQQ, PTU, PVY)$ completes both directions.
\end{proof}

\begin{lemma}[Locus of Involution]\label{involution-locus}
    Consider distinct points $A, B, C, D, E, F$ in general position and $G = AC \cap BD, H = AD \cap BC, I = AE \cap BF, J = AF \cap BE$, such that none of the ten points are the circular points at infinity. Then there is a unique cubic $\mathcal C$ through these ten points. Furthermore, for every $P \in \mathcal C$, we have
    \[(PA, PB), \quad (PC, PD), \quad (PE, PF), \quad (PG, PH), \quad (PI, PJ)\]
    are part of a fixed involution.
\end{lemma}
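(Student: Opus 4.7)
The plan splits into four parts: (i) show the involution locus is a plane cubic $\mathcal{C}$; (ii) show $\mathcal{C}$ contains the ten named points; (iii) deduce uniqueness from the classical fact that nine general points lie on a unique cubic; and (iv) upgrade to the full five-pair involution via the Dual of Desargues' Involution Theorem.

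For (i), I would parameterize the pencil of lines through a variable point $P$ as a $\mathbb{P}^1$ and associate to each pair of points $(X,Y)$ the binary quadratic form $f_{XY}$ on this pencil whose roots correspond to $PX, PY$. Three pairs being in involution is equivalent to linear dependence of $f_{AB}, f_{CD}, f_{EF}$, i.e., the vanishing of a $3\times 3$ determinant in their coefficients. Each coefficient is quadratic in the coordinates of $P$, so this determinant has degree at most $6$; extracting spurious linear factors arising from degeneracies of the pencil at $P$ leaves a cubic equation cutting out $\mathcal{C}$. For (ii), direct substitution shows $A, B, C, D, E, F \in \mathcal{C}$: at $P = A$, the form $f_{AB}$ acquires an extra linear factor forcing the determinant to vanish. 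For a diagonal point like $G = AC \cap BD$, we have $GA = GC$ (both equal to line $AC$) and $GB = GD$ (both equal to line $BD$), so $f_{AB}$ and $f_{CD}$ are proportional at $P = G$ and the determinant vanishes trivially; the same argument handles $H, I, J$. Uniqueness (iii) then follows from the general-position hypothesis, since any nine of the ten points determine a unique cubic passing through them.

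The crux is (iv). I would invoke the Dual of Desargues' Involution Theorem on the complete quadrilateral with sides $AC, AD, BC, BD$, whose three pairs of opposite vertices are $\{A,B\}, \{C,D\}, \{G,H\}$. This theorem guarantees that for \emph{every} point $P$, the pairs $(PA,PB), (PC,PD), (PG,PH)$ lie in a single involution $\sigma_1$ on the pencil through $P$. Similarly, applied to the quadrilateral with sides $AE, AF, BE, BF$, the pairs $(PA,PB), (PE,PF), (PI,PJ)$ lie in an involution $\sigma_2$. Now if $P \in \mathcal{C}$, then $(PA,PB), (PC,PD), (PE,PF)$ lie in a common involution $\sigma$, which must agree with $\sigma_1$ (sharing two pairs) and with $\sigma_2$ (sharing two pairs), since an involution on $\mathbb{P}^1$ is determined by any two of its pairs. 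Hence $(PG,PH) \in \sigma_1 = \sigma$ and $(PI,PJ) \in \sigma_2 = \sigma$.

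The main obstacle will be step (i): the determinantal expression is messy, and rigorously identifying its factorization as (cubic) $\times$ (spurious factors) requires care. A conceptually cleaner route avoids this computation by defining $\mathcal{C}$ directly as the locus where $\sigma_1(P)$ sends $PE$ to $PF$ --- using the involution $\sigma_1$ already supplied by Dual Desargues --- and then counting intersections with a general line to verify the degree is three.
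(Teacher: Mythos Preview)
Your proposal is correct and follows essentially the same route as the paper: both identify $\mathcal{C}$ as the involution locus for $(PA,PB),(PC,PD),(PE,PF)$, verify the ten points lie on it, and use general position for uniqueness. Your step (iv) --- deducing the full five-pair involution from two applications of Dual Desargues on the complete quadrilaterals with opposite-vertex triples $\{A,B\},\{C,D\},\{G,H\}$ and $\{A,B\},\{E,F\},\{I,J\}$, together with the fact that two distinct pairs determine an involution --- makes explicit what the paper leaves tacit, and your determinantal treatment of (i) supplies detail where the paper simply cites the classical fact.
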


\begin{figure}[!htbp]\centering
    \includegraphics[width=200pt]{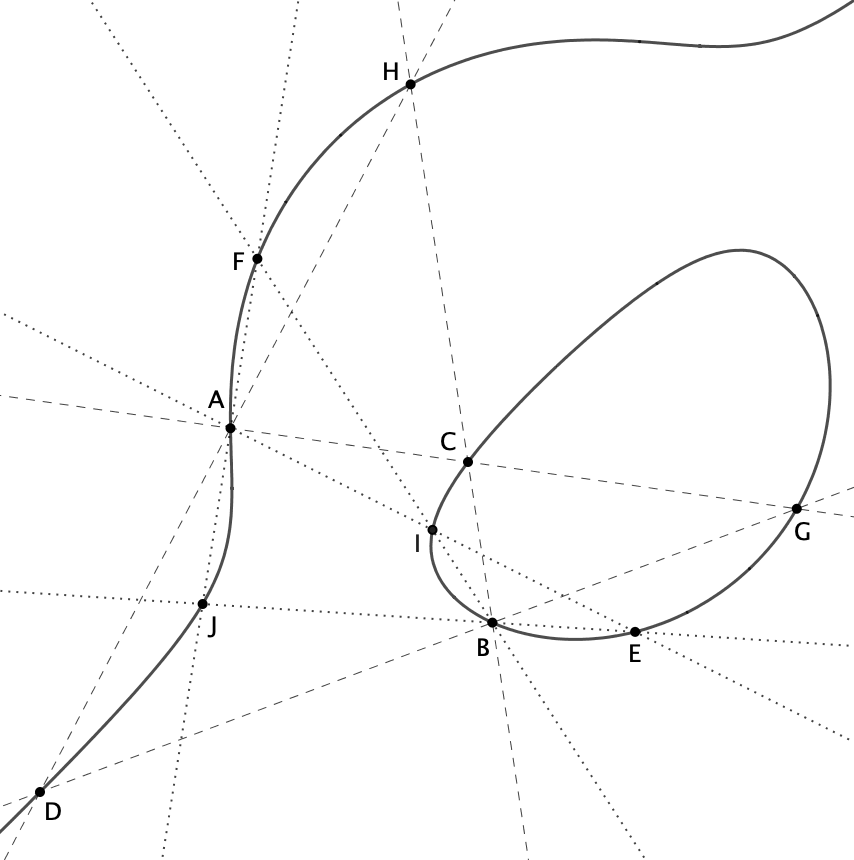}
    \caption{Two Complete Quadrilaterals}
    \label{fig:5.1}
\end{figure}

\begin{proof}
    By the Dual of Desargues' Involution Theorem, the locus $\mathcal C$ of all points $P$ for which $(PA, PB)$, $(PC, PD)$, $(PE, PF)$ are part of a single involution is a cubic through
    \[A, B, C, D, E, F, G, H, I, J.\]
    Thus, there exists a cubic through these $10$ points. Since $A, B, C, D, E, F$ are in general position, no four of the $10$ constructed points are collinear. Since $\mathcal C$ passes through these $10$ fixed points, the cubic through these $10$ points must be unique, as desired.
\end{proof}

We are finally set up to prove the main result.

\begin{theorem}[Characterization in $\mathbb{CP}^2$, Condition (2) $\implies$ (1)]\label{class-forward}
    Let $\mathcal C$ be a non-degenerate cubic through $I, J$ such that $I$ and $J$ are non-singular, and $II$ intersects $JJ$ at a point $X$ on $\mathcal C$. Then there exist non-singular points $A, B, C, D \in \mathcal C$ apart from $I, J$ such that $\mathcal C$ is the isogonal cubic of $ABCD$.
\end{theorem}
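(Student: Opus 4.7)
The plan is to invoke \Cref{involution-locus} with the pair $(E,F)$ specialized to the circular points $(I,J)$: that lemma asserts that the locus of $P$ for which $(PA,PC),(PB,PD),(PI,PJ)$ are in involution — that is, the isogonal cubic of $ABCD$ — is the unique cubic passing through the ten points $A, B, C, D, I, J$ together with the four derived intersections $AB\cap CD$, $AD\cap BC$, $AI\cap CJ$, and $AJ\cap CI$, provided no four of them are collinear. It therefore suffices to exhibit non-singular $A, B, C, D\in\mathcal C\setminus\{I,J\}$ such that all four derived intersections lie on $\mathcal C$ and such that the resulting ten points are in sufficiently general position.

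To control the four incidence conditions, I would pass to the group law on $\mathcal C$ with an inflection point chosen as origin $O$. In this language, the third intersection of a chord $PQ$ with $\mathcal C$ is $-(P+Q)$, so the four derived incidences are equivalent to the system
\[A+B=C+D,\qquad A+D=B+C,\qquad A+I=C+J,\qquad A+J=C+I.\]

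The crucial observation is that the hypothesis $II\cap JJ\in\mathcal C$ forces $\tau:=I-J$ to be a nontrivial $2$-torsion element of $\mathcal C$. Letting $X$ denote that intersection, the tangency relations $2I+X=O$ and $2J+X=O$ yield $2(I-J)=O$, and $\tau\neq O$ because $I\neq J$; this incidentally rules out the cuspidal case for $\mathcal C$, whose smooth locus has trivial $2$-torsion, leaving only the smooth and nodal cases in which the group law is well defined. Setting $C:=A+\tau$ and $D:=B+\tau$, every equation in the above system collapses to an identity; for instance, $C+J=A+\tau+J=A+(I-J)+J=A+I$, and the remaining three follow analogously using $2\tau=O$. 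Thus \emph{any} choice of $A, B\in\mathcal C$ produces a quadruple $(A,B,C,D)$ on $\mathcal C$ that satisfies all four derived incidences.

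The principal obstacle, and the step requiring the most care, is the general-position bookkeeping: one must show that $A, B$ can be chosen so that the ten constructed points are pairwise distinct, none coincides with a singular point of $\mathcal C$ or with $I$ or $J$, and no four are collinear. Each of these constraints defines a proper Zariski-closed subset of the two-dimensional variety $\mathcal C\times\mathcal C$, and there are only finitely many such constraints (the distinctness conditions among finitely many pairs and the collinearity conditions are $\binom{10}{4}$ in number), so a generic pair $(A,B)$ avoids them all. With such a choice fixed, \Cref{involution-locus} identifies the involution locus as the unique cubic through the ten points; since $\mathcal C$ itself is an irreducible cubic through all of them, B\'{e}zout's theorem forces $\mathcal C$ to coincide with that cubic, which by construction is the isogonal cubic of $ABCD$.
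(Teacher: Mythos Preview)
Your proof is correct and arrives at essentially the same quadrilateral as the paper, but via a genuinely different mechanism. The paper picks $A\in\mathcal C$, sets $B=IA\cap\mathcal C$, $D=JA\cap\mathcal C$, and invokes \Cref{complete-quad} (a Cayley--Bacharach argument) to get $C=ID\cap JB\in\mathcal C$; it then repeats with a second seed $A'$ and applies \Cref{involution-locus} to the pairs $(I,J),(A,C),(A',C')$. You instead read the hypothesis $II\cap JJ\in\mathcal C$ as the group-theoretic statement $2(I-J)=O$, and build the quadrilateral by translating $A,B$ by the $2$-torsion element $\tau=I-J$. In group-law terms the paper's construction gives exactly $C=A+\tau$ as well (since $-(I+D)=-(I-J-A)=A+\tau$), so the two outputs coincide; what differs is that you replace the Cayley--Bacharach step by the cleaner torsion computation, which also explains \emph{why} the tangent-intersection hypothesis is the right one and immediately excludes cuspidal cubics. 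The paper's route avoids invoking the group law and stays closer to synthetic incidence, at the cost of obscuring this structural content.

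Two small remarks. First, \Cref{involution-locus} as stated in the paper carries the clause ``none of the ten points are the circular points at infinity,'' which your application (and the paper's own) violates; the clause plays no role in that lemma's proof, so this is a defect of the lemma's statement rather than of your argument. Second, your general-position step is fine but can be sharpened: once the ten points are pairwise distinct and all lie on the irreducible cubic $\mathcal C$, B\'ezout already forbids four of them from being collinear, so the only conditions you actually need to impose on $(A,B)$ are the finitely many distinctness and non-singularity constraints.
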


\begin{proof}
    Choose any point $A \in \mathcal C$. Let $B = IA \cap \mathcal C, D = JA \cap \mathcal C$; by \Cref{complete-quad}, $ID$ and $JB$ meet a point $C$ on $\mathcal C$. Construct four points $A', B', C', D' \in \mathcal C$ distinct from $A, B, C, D$ analogously, where $I = A'B' \cap C'D'$ and $J = A'D' \cap B'C'$. We may select $A, A'$ such that none of $A, A', B, B', C, C', D, D'$ are singular.
    
    By \Cref{involution-locus}, $\mathcal C$ is the locus of points $P$ for which $(PI, PJ), (PA, PC), (PA', PC')$ are part of a single involution. But since this involution concerns the circular points at infinity, it follows that $\mathcal C$ is the locus for which $(PA, PC), (PA', PC')$ are isogonal. We are done by taking quadrilateral $AA'CC'$.
\end{proof}

\begin{theorem}[Characterization in $\mathbb{CP}^2$, Condition (1) $\implies$ (2)]\label{class-reverse}
    Let $\mathcal C$ be the non-degenerate isogonal cubic of $ABCD$ where $A, B, C, D$ are non-singular. Suppose that $I, J$ lie on $\mathcal C$ at non-singular points distinct from $A, B, C, D$. Then $II \cap JJ \in \mathcal C$.
\end{theorem}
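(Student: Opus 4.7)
The plan is to invoke \Cref{complete-quad} with $P = I$, $Q = J$, and a carefully chosen auxiliary point $T \in \mathcal C$ so that the third intersections $U := PT \cap \mathcal C$ and $V := QT \cap \mathcal C$ turn out to be vertices of $ABCD$. I take $T := BI \cap DJ$; since $T$ lies on line $IB$, the line $PT = IT$ coincides with $IB$, whose three intersections with $\mathcal C$ are $I$, $B$, and $T$. The third intersection other than $P = I$ and $T$ is therefore $U = B$, and symmetrically $V = D$. \Cref{complete-quad} then states the equivalence $II \cap JJ \in \mathcal C$ iff $ID \cap JB \in \mathcal C$, reducing the theorem to showing this single auxiliary point lies on $\mathcal C$.

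The heart of the argument is then a short ``involution-collapse'' verification that $T = BI \cap DJ$ and $T' := ID \cap JB$ both lie on $\mathcal C$. At $X = T$, the containments $T \in BI$ and $T \in DJ$ force $TB = TI$ and $TD = TJ$ as lines through $T$, so $(TB, TD) = (TI, TJ)$ as unordered pairs. Consequently, the three-pair involution condition $(TA, TC), (TB, TD), (TI, TJ)$ defining membership in $\mathcal C$ collapses to the existence of an involution on the pencil through $T$ having $(TA, TC)$ and the repeated pair $(TB, TD) = (TI, TJ)$ as pairs --- which is automatic, since any two distinct pairs of lines through a point determine a unique involution in which the repeated pair trivially sits. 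Hence $T \in \mathcal C$, and the symmetric argument applied to $T'$ shows $T' \in \mathcal C$. Combined with the equivalence from \Cref{complete-quad}, this yields $II \cap JJ \in \mathcal C$.

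The main obstacle is verifying the genericity hypotheses underneath the above. \Cref{complete-quad} requires all five of $I, J, T, B, D$ to be non-singular on $\mathcal C$ and the lines $IT = IB$ and $JT = JD$ to be non-tangent at their endpoints, while the involution-collapse at $T$ requires the pairs $(TA, TC)$ and $(TI, TJ)$ to be genuinely distinct; together these are codimension-one conditions, so they hold off a proper Zariski-closed locus of configurations of $(A, B, C, D)$. For configurations inside that locus (for instance, $I \in BD$ or $T$ coincidentally singular), one can instead take $T = AI \cap CJ$, obtaining $U = A, V = C$ and reducing to $IC \cap JA \in \mathcal C$ by the analogous involution-collapse; between the two choices of $T$, every admissible configuration is covered.
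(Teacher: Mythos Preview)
Your proof is correct and is essentially the paper's own argument with the roles of the pairs $(A,C)$ and $(B,D)$ interchanged: the paper takes $X = AI \cap CJ$ and $Y = AJ \cap CI$, shows both lie on $\mathcal C$ by exactly your involution-collapse observation (at $X$ the pair $(XA,XC)$ coincides with $(XI,XJ)$), and then applies \Cref{complete-quad} with $T = X$, $U = A$, $V = C$, so that $PV \cap QU = Y$. Your backup choice $T = AI \cap CJ$ is literally the paper's proof, and the paper handles the degenerate coincidences with the same hand-wave you give (``use the tangent to $\mathcal C$ at $X$ when necessary'').
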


\begin{proof}
    Let $X = AI \cap CJ$ and $Y = AJ \cap CI$; then $X, Y$ are non-singular. Note that $(XA, XC)$ and $(XI, XJ)$ are the same pair of lines, so $(XA, XC)$, $(XB, XD)$, $(XI, XJ)$ form an involution. (If $X$ is the same point as either $A, C, I,$ or $J$, we instead use the tangent to $\mathcal C$ at $X$ when necessary.)
    
    So by definition, $X \in \mathcal C$; similarly, $Y \in \mathcal C$. By \Cref{complete-quad}, this implies $II \cap JJ \in \mathcal C$ as desired.
\end{proof}

Going back to $\mathbb R^2$, we derive the complete characterization of all non-degenerate isogonal cubics:

\begin{theorem}[Characterization of All Isogonal Cubics in $\mathbb R^2$]\label{class-real}
    Let $\mathcal C$ be a non-degenerate cubic in $\mathbb R^2$, and let $\mathcal C_0$ denote its embedding in $\mathbb{CP}^2$. Then the following two conditions are equivalent:
    \begin{enumerate}
        \item[(1)] There exist distinct $A, B, C, D \in \mathcal C$ such that $\mathcal C$ is the isogonal cubic of $ABCD$.
        \item[(2)] The circular points at infinity $I, J$ lie on $\mathcal C_0$, and the tangents to $\mathcal C_0$ at $I, J$ intersect each other on $\mathcal C_0$.
    \end{enumerate}
\end{theorem}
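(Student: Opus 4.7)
The plan is to reduce both implications to their $\mathbb{CP}^2$ counterparts, already established as \Cref{class-forward} and \Cref{class-reverse}, by inserting a real-structure argument based on complex conjugation. Throughout I will let $\sigma$ denote complex conjugation on $\mathbb{CP}^2$: since $\mathcal{C}$ has real coefficients, $\sigma$ restricts to an involution of $\mathcal{C}_0$ that fixes $\mathcal{C}$ pointwise and swaps $I$ with $J$.

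For (1) $\implies$ (2), the real points $A, B, C, D$ are automatically distinct from the non-real circular points $I, J$. To invoke \Cref{class-reverse} I also need them to be non-singular on $\mathcal{C}$; if some vertex happens to be singular I would replace $ABCD$ with another quadrilateral $XYX'Y'$ arising from two pairs of isogonal conjugates via \Cref{general}, choosing these pairs generically so that all four new vertices avoid the finite singular locus of $\mathcal{C}$. Then \Cref{class-reverse} yields $II \cap JJ \in \mathcal{C}_0$, which is exactly the asserted tangent condition. The claim $I, J \in \mathcal{C}_0$ is already built into the $\mathbb{CP}^2$ definition of isogonality, since the circular points are always in the isogonality locus.

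For (2) $\implies$ (1), I would mimic the construction in the proof of \Cref{class-forward} but seed it with a real point so that $\sigma$-equivariance yields real outputs. Pick a real non-singular point $A \in \mathcal{C}$, set $B := IA \cap \mathcal{C}_0$ and $D := JA \cap \mathcal{C}_0$, and let $C := ID \cap JB$, which lies on $\mathcal{C}_0$ by \Cref{complete-quad}. Applying $\sigma$ to this construction and using $\sigma(I) = J$, $\sigma(A) = A$ immediately gives $\sigma(B) = D$, so $\sigma(ID) = JB$ and hence $\sigma(C) = C$; that is, $C$ is real. Choosing a second real non-singular $A' \in \mathcal{C}$ and running the same construction produces a real $C'$. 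The involution argument from the proof of \Cref{class-forward} then identifies $\mathcal{C}$ as the isogonal cubic of the quadrilateral $AA'CC'$, after noting that the $\mathbb{CP}^2$ definition of isogonality reduces to the $\mathbb{RP}^2$ definition when the point $P$ is real.

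The main obstacle I expect is the genericity bookkeeping: ensuring that $A, A', C, C'$ are four distinct points in sufficiently general position for the resulting quadrilateral to be non-degenerate and for the construction itself not to collapse. Each potential coincidence (for instance $C = A$, which forces $A$ to lie on one of the tangents to $\mathcal{C}_0$ at $I$ or $J$, or $C = C'$, or $A$ equal to the unique real point of $\mathcal{C}$ at infinity so that $IA, JA$ misbehave) carves out a proper algebraic subvariety of $\mathcal{C}_0$ meeting the real locus in at most finitely many points. Since the real non-singular locus of $\mathcal{C}$ is infinite, generic choices of $A$ and $A'$ on it avoid every such obstruction; this step is technical but not deep.
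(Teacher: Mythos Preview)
Your reduction to \Cref{class-forward} and \Cref{class-reverse} is exactly the paper's strategy, and your $\sigma$-equivariance argument that $C$ is real is a cleaner substitute for the paper's explicit coordinate computation (its part~(c)). For the non-singularity of the vertices in the direction (1)$\implies$(2), the paper argues directly from \Cref{singular}: the isogonal conjugate of $A$ in $ABCD$ is $C\ne A$, so $A$ cannot be self-conjugate and hence cannot be singular; your replacement via \Cref{general} also works but is unnecessary.

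One omission to flag: both \Cref{class-forward} and \Cref{class-reverse} assume that $I,J$ are non-singular on $\mathcal C_0$, and condition~(2) itself only makes sense once this is known, yet you never verify it. The paper checks this by a short coefficient computation (its part~(b)). With your setup the argument is even quicker: since $\sigma$ is an automorphism of $\mathcal C_0$ carrying $I$ to $J$, if $I$ were singular then $J$ would be as well, contradicting the fact that a non-degenerate cubic has at most one singular point. You should insert this one-line observation before invoking either of the $\mathbb{CP}^2$ theorems.
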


\begin{proof}
    The only aspects of the proof we need to modify for this new wording are to prove that:
    \begin{enumerate}
        \item Under the conditions of (1), if $\mathcal C$ is the isogonal cubic of $ABCD$ where $A, B, C, D$ are distinct, then $A, B, C, D$ cannot be singular points of $\mathcal C_0$.
        \item Under the conditions of (2), if any cubic $\mathcal C$ in $\mathbb R^2$ satisfies that its embedding $\mathcal C_0$ in $\mathbb{CP}^2$ passes through $I$ and $J$, then $I$ and $J$ are not singular.
        \item Under the conditions of (2), for $A \in \mathcal C$ such that $K = AI \cap \mathcal C_0, L = AJ \cap \mathcal C_0, A' = IL \cap JK$ all lie on distinct points of $\mathcal C_0$, then the point $A'$ will be contained in $\mathcal C$ as well.
    \end{enumerate}
    
    Let $\mathcal C$ have Cartesian equation $ax^3+bx^2y+cxy^2+dy^3 + G(x, y) = 0$, where $G$ is a second-degree polynomial in $x, y$. Then $a, b, c, d$ must be real, and since $\mathcal C$ is not degenerate, they cannot all be zero. Thus $\mathcal C_0$ has equation $F(x, y, z) = 0$ where $F(x, y, z) = ax^3 + bx^2y + cxy^2 + dy^3 + zP(x, y, z)$, where $P(x, y, z)$ is a second-degree homogeneous polynomial in $x, y, z$.
    
    \begin{subproof}
        For (a), assume that $\mathcal C$ is the isogonal cubic of $ABCD$. Note that $A$ is a singular point in $\mathcal C$ iff it is a singular point in $\mathcal C_0$, because both are equivalent to $\frac{\partial F}{\partial x}(A) = \frac{\partial F}{\partial y}(A) = \frac{\partial F}{\partial z}(A) = 0$, the same equation in both $\mathbb{RP}^2$ and $\mathbb{CP}^2$. We just need to show that $A$ is not a singular point in $\mathbb R^2$.
        
        By \Cref{singular}, $A$ is singular if and only if $A$ is the isogonal conjugate of itself in $ABCD$. But the isogonal conjugate of $A$ is $C$, and since $A$ and $C$ are distinct, this cannot happen. Therefore, $A$ and similarly $B, C, D$ are not singular points of $\mathcal C_0$, as desired. This proves part (a).
    \end{subproof}
    
    \begin{subproof}
        For (b), we consider general cubic $\mathcal C$ which contains $I, J$. Plugging in $I = (1: i: 0)$ yields equation $a+bi-c-di = 0$, which implies that $a = c$ and $b = d$ because $a, b, c, d$ are all real. Thus $\mathcal C$ has equation $(x^2+y^2)(ax+by) + zP(x, y, z)$. We get $\frac{\partial F}{\partial x}(1: i: 0) = 3ax^2 + 2xby + ay^2$.
        
        Assume, for the sake of contradiction, that $I$ is a singular point. We require $\frac{\partial F}{\partial x} = 0$ for $(1: i: 0)$, which rearranges to $2a + 2bi = 0$. Since $a, b$ are real, this implies that $a = b = 0$, so $a, b, c, d$ are all zero - the desired contradiction. This proves part (b).
    \end{subproof}
    
    \begin{subproof}
        For (c), it suffices to show that $A' \in \mathbb R^2$. Note that $I, J, A, K, L, A'$ all lie on $\mathcal C_0$, which has all real coefficients. Now, $K, L$ do not lie on the line of infinity, or else $A$ would lie on the line of infinity, which would imply $\mathcal C_0$ containing four points on a line and thus be degenerate, a contradiction. Thus $K, L$ they are contained in $\mathbb C^2$ and thus can be expressed in Cartesian coordinates $(k_x, k_y)$ and $(l_x, l_y)$ respectively. Since $A \in \mathbb R^2$, note that $K$ and $L$ cannot lie in $\mathbb R^2$ - or else the entire lines $AK$ and $AL$ will be contained in $\mathbb{RP}^2$ and never intersect the line of infinity at complex points $I$ and $J$.
        
        From part (b), $\mathcal C$ must have equation of the form $ax^3+bx^2y+axy^2+by^3 + G(x, y) = 0$ where $a, b$ and the coefficients of $G$ are real numbers. For $K$ to satisfy this equation, the point $K'$ whose Cartesian coordinates are the complex conjugates of $K$ - that is, $K' = \left(\ol{k_x}, \ol{k_y}\right)$ in Cartesian coordinates - must also satisfy this equation, and thus lie on $\mathcal C_0$. Having started with $A, I, K$ collinear, we now claim that $A, J, K'$ are collinear. It suffices to show that
        \[\begin{vmatrix} 1 & -i & 0 \\ a_x & a_y & 1 \\ \ol{k_x} & \ol{k_y} & 1 \end{vmatrix} = 0 \quad\quad \text{given that} \quad\quad \begin{vmatrix} 1 & i & 0 \\ a_x & a_y & 1 \\ k_x & k_y & 1 \end{vmatrix} = 0\]
        Letting $k_x = p+qi$ and $k_y = r+si$ for $p, q, r, s \in \mathbb R$, the second determinant equation gives us
        \[0 = \begin{vmatrix} 1 & i & 0 \\ a_x & a_y & 1 \\ p+qi & r+si & 1 \end{vmatrix} = -r-si-q+pi+a_y-ia_x\]
        where $(a_x, a_y)$ are the Cartesian coordinates of $A$. Equating the real and imaginary parts yields $a_y = q+r, a_x = p-s$. Similarly, the first determinant equation gives us
        \[0 = \begin{vmatrix} 1 & -i & 0 \\ a_x & a_y & 1 \\ p-qi & r-si & 1 \end{vmatrix} = -r+si-q-pi+a_y+ia_x\]
        and equating the real and imaginary parts yields $a_y = q+r, a_x = p-s$ - the exact same conditions. Therefore, given that $A, I, K$ are collinear, we indeed conclude that $A, J, K'$ are collinear.
        
        In other words, $K'$ is the unique intersection of $\mathcal C_0$ with $AJ$, hence $K' \equiv L$. Thus $A' = IK' \cap JK$, and $A'$ will not be a point at infinity (otherwise $K$ will also be a point at infinity). Hence in quadrilateral $AKA'K' \in \mathbb C^2$, we have $AK$ meets $A'K'$ at a point of infinity, and $AK'$ meets $A'K$ at a point of infinity - so complex segments $AA'$ and $KK'$ share the same midpoint $M$. Letting $A$ have Cartesian coordinates $(m, n)$ in $\mathbb C^2$, this means that $M$ has Cartesian coordinates
        \[\left(\frac{a_x+m}{2}, \frac{a_y+n}{2}\right) \quad = \quad \left(\frac{k_x + \ol{k_x}}{2}, \frac{k_y + \ol{k_y}}{2}\right)\]
        But $\frac{k_x+\ol{k_x}}{2}$ is just the real part of $k_x$, so the coordinates of $M$ are real as well. Hence $m$ and $n$ are real, so $A' = (m, n)$ lies in $\mathbb R^2$, proving part (c).
    \end{subproof}
    
    With (a), (b), (c) proven, for the sake of completion we will show how this fully finishes our characterization. For the direction (1) $\implies$ (2), we start with $ABCD$, and by (a) none of $A, B, C, D$ are singular points. Then the result directly follows from \Cref{class-reverse}.
    
    For the direction (2) $\implies$ (1), we start with circular points at infinity $I, J$ lying on $\mathcal C_0$, which by (b) implies that $I, J$ are not singular points. Assuming that the tangents to $\mathcal C_0$ at $I$ and $J$ intersect each other on $\mathcal C$, we can choose any point $A \in \mathcal C$ and letting $K = AI \cap \mathcal C_0$, $L = AJ \cap \mathcal C_0$, and $A' = IL \cap JK$ where $A' \in \mathcal C_0$, and then choose another point $B \in \mathcal C$ and define $B' \in \mathcal C_0$ the same way, such that all points formed by these intersections are distinct. By \Cref{class-forward}, $\mathcal C_0$ will be the isogonal cubic of $ABA'B'$. In addition, (c) implies that $A'$ and $B'$ will in fact lie in $\mathbb R^2$ as well. Therefore, $ABA'B'$ is fully contained in $\mathbb R^2$, so $\mathcal C$ is indeed the isogonal cubic of $ABA'B'$. This completes the solution.
\end{proof}

\section{Uniqueness in the Isogonal Cubic}

With this algebraic characterization of all isogonal cubics in $\mathbb R^2$ in mind, in this section, we prove that given an isogonal cubic $\mathcal C \in \mathbb{RP}^2$, there is only one possible spiral center $P$, and for any $X \in \mathcal C$, there is only one possible point that could be the isogonal conjugate of $X$.

\begin{theorem}[Uniqueness of the Spiral Center]
    Consider non-degenerate $\mathcal C \in \mathbb{RP}^2$ such that there exist $A, B, C, D \in \mathbb R^2$ for which $\mathcal C$ is the isogonal cubic of $ABCD$. Let $ABCD$ have spiral center $P$. Let $\mathcal C_0$ denote the embedding of $\mathcal C$ in $\mathbb{CP}^2$. Then $PI$ and $PJ$ are respectively tangent to $\mathcal C_0$ at $I$ and $J$.
\end{theorem}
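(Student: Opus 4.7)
The plan is to apply Theorem \ref{tangents-isogonal}, extended to $\mathbb{CP}^2$, to the pair $(I, J)$ viewed as isogonal conjugates in $ABCD$. Since every reflection of $\mathbb{R}^2$ across a real line interchanges the circular points at infinity, at each real vertex $V \in \{A, B, C, D\}$ the isogonal involution on the pencil through $V$ sends $VI$ to $VJ$; hence $J$ is the isogonal conjugate of $I$ in $ABCD$. The line $IJ$ is the line at infinity in $\mathbb{CP}^2$, and its three intersections with $\mathcal{C}_0$ are $I$, $J$, and the unique real point at infinity on $\mathcal{C}$, namely $P_\infty$ along $MN$ (Lemma \ref{truth}). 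By Theorem \ref{infinity} the isogonal conjugate of $P_\infty$ in $ABCD$ is precisely $P$.

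Applying Theorem \ref{tangents-isogonal} with $X = I$, $X' = J$, and third intersection $Z = P_\infty$, the tangents to $\mathcal{C}_0$ at $I$ and $J$ meet at the isogonal conjugate of $Z$, which is $P$. Therefore the tangent at $I$ contains both $I$ and $P$ and so equals $PI$; likewise the tangent at $J$ equals $PJ$, which gives the desired conclusion.

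The principal obstacle is rigorously applying Theorem \ref{tangents-isogonal} at the complex points $I, J$, since that theorem and its supporting Theorems \ref{reversal} and \ref{tangent} were stated in $\mathbb{RP}^2$. Section 6's algebraic definition of isogonality extends naturally to $\mathbb{CP}^2$, but care is needed in interpreting the involution pair $(II, IJ)$ at $I$ (read, per the tangent convention in Section 1, as pairing the tangent line at $I$ with the line $IJ$) and checking that the chains of identities in the earlier proofs survive this reinterpretation. A cleaner fallback is a multiplicity-counting argument: $P$ is smooth on $\mathcal{C}_0$ by Theorem \ref{singular} (since its isogonal conjugate $P_\infty$ is distinct from $P$), and the tangent at $P$ is real by Theorem \ref{tangent}, hence distinct from the isotropic line $PI$; combined with the existence (by Theorem \ref{class-reverse}) of a point on $\mathcal{C}_0$ lying on both tangents at $I$ and $J$, this forces that intersection point to coincide with $P$, so that $PI$ and $PJ$ are the respective tangents.
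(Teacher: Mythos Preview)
Your primary approach via Theorem~\ref{tangents-isogonal} is conceptually attractive and quite different from the paper's proof. The paper argues by contradiction: if $PI$ were not tangent at $I$, then (using part (c) of Theorem~\ref{class-real}) the third intersections $K, L$ of $PI, PJ$ with $\mathcal C_0$ would produce a real point $Q = IL \cap JK \in \mathcal C$, and by Theorem~\ref{class-forward} the cubic $\mathcal C$ would also be the isogonal cubic of $APCQ$. The paper then uses Theorem~\ref{isogonal-parallel} on this new quadrilateral to force $Q$ to coincide with $P_\infty$ or with $C$, leading to contradictions. This argument stays entirely within real geometry once $Q$ is produced.

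Your approach, by contrast, tries to identify $P$ directly as the intersection of the tangents at $I, J$ by feeding the pair $(I, J)$ into Theorem~\ref{tangents-isogonal}. However, the obstacle you identify is genuine and you do not resolve it. The proof of Theorem~\ref{tangents-isogonal} rests on Theorem~\ref{tangent} and Corollary~\ref{swallow-reversal}, and both invoke the isogonal involution \emph{at the point $X$ itself}. When $X = I$, the $\mathbb{CP}^2$ definition of isogonality from Section~6 requires the pair $(II, IJ)$; under the paper's convention $II$ means the tangent to $\mathcal C_0$ at $I$, which is precisely the unknown you are trying to determine. So the argument is circular as written, and you give no independent verification that the chain of identities survives at $I$.

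Your fallback does not close this gap. You correctly observe that the tangent to $\mathcal C_0$ at $P$ is real (so $PI$ is not tangent at $P$), and that Theorem~\ref{class-reverse} guarantees some point $R \in \mathcal C_0$ lying on both tangents at $I, J$. But nothing you have written forces $R = P$. Knowing that $PI$ meets $\mathcal C_0$ transversally at $P$ only tells you that either $PI$ is tangent at $I$ or there is a third intersection $K$; you need an additional argument to exclude the latter, and the mere existence of $R$ does not supply one. One way to repair this would be to extend the isogonal-conjugation map algebraically (e.g.\ via the formula of Theorem~\ref{isogonal-parallel}, which is purely an intersection construction and therefore passes to $\mathcal C_0$) and verify that it sends $I \mapsto J$; then a group-law computation on $\mathcal C_0$ would recover the conclusion of Theorem~\ref{tangents-isogonal} without ever invoking isogonality at $I$. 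But that work remains to be done.
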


\begin{proof}
    Assume, for the sake of contradiction, that $PI$ is not tangent to $\mathcal C_0$ at $I$; then $PJ$ cannot be tangent to $\mathcal C_0$ at $J$ either, so by part (c) of \Cref{class-real}, $PI$ and $PJ$ intersect $\mathcal C_0$ at $K, L \in \mathbb C^2$ respectively, distinct from $I, J, P$, and $IL$ and $JK$ intersect $\mathcal C_0$ at $Q \in \mathbb R^2$. Then by \Cref{class-forward}, $\mathcal C$ is the non-degenerate isogonal cubic of the three quadrilaterals $ABCD$, $APCQ$, $BPDQ$.
    
    By \Cref{truth}, there is one point of infinity $P_\infty \in \mathcal C$, which is the point of infinity along the Newton-Gauss lines of $ABCD$, $APCQ$, $BPDQ$. Let $AP$ meet $\mathcal C$ at $E$; then by \Cref{isogonal-parallel}, $C, E, P_\infty$ are collinear. Since $\mathcal C$ is the isogonal cubic of $APCQ$, it follows that $AP \cap CQ \in \mathcal C$, so in fact $Q = CE \cap \mathcal C$. Since $C, E$ lie in $\mathbb R^2$ they are distinct from $P_\infty$.
    
    If $C, E, P_\infty$ are distinct, then $Q \equiv P_\infty$, contradicting $Q \in \mathbb R^2$, as desired. So line $CE$ intersects $\mathcal C$ with multiplicity 2. We assumed that $ABCD \in \mathbb R^2$, so we cannot have $C \equiv P_\infty$, hence either $C \equiv E$ or $E \equiv P_\infty$. In either case, we cannot have $Q \equiv P_\infty$ else $Q \in \mathbb R^2$ is contradicted; thus $Q \equiv C$. But considering $A, P$ are the respective isogonal conjugates of $C, Q$ in $APCQ$, so this implies $A \equiv P$. Now, the isogonal conjugates of $A, P$ in $ABCD$ are $C, P_\infty$, which implies that $C \equiv P_\infty$ - the desired contradiction.
\end{proof}

In other words, a given non-generate isogonal cubic can only have one possible spiral center - we may now call this \textit{the} spiral center of a given isogonal cubic $\mathcal C$. This leads to the following result, allowing us to define isogonal conjugation on any given isogonal cubic without having to construct a base quadrilateral $ABCD$:

\begin{theorem}[Uniqueness of the Isogonal Conjugate]
    Consider non-degenerate $\mathcal C \in \mathbb {RP}^2$ such that there exist $A, B, C, D \in \mathbb R^2$ for which $\mathcal C$ is the isogonal cubic of $ABCD$. Then for any point $X \in \mathcal C$, there is only one possible point $X' \in \mathcal C$ which could be the isogonal conjugate of $X$ in $ABCD$.
\end{theorem}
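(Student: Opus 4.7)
The plan is to reduce the uniqueness of the isogonal conjugate to the uniqueness of the spiral center $P$ already established in the preceding theorem, by invoking the explicit construction of the isogonal conjugate provided by \Cref{isogonal-parallel}. That theorem expresses the isogonal conjugate of $X$ purely in terms of $X$, $P$, and the point of infinity $P_\infty$ along the Newton-Gauss line of the base quadrilateral. Once both $P$ and $P_\infty$ are shown to be intrinsic to $\mathcal C$, the formula for $X'$ will depend only on $\mathcal C$ and $X$, which is exactly the claim.

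First I would verify that $P_\infty$ is intrinsic to $\mathcal C$. By \Cref{truth}, every non-degenerate isogonal cubic contains a unique real point at infinity, and this point cannot depend on the base quadrilateral chosen; in particular, the Newton-Gauss line of any base quadrilateral $ABCD$ realizing $\mathcal C$ must be directed along this fixed point $P_\infty$. Next, by the preceding uniqueness theorem, the spiral center $P$ of $ABCD$ is exactly the intersection of the tangents to $\mathcal C_0$ at $I$ and $J$, so it too is determined by $\mathcal C$ alone. Moreover $P$ lies on $\mathcal C$ itself, since by \Cref{infinity} it admits an isogonal conjugate (namely $P_\infty$) and is therefore excellent.

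Now for any $X \in \mathcal C$ and any base quadrilateral $ABCD$ whose isogonal cubic is $\mathcal C$, \Cref{isogonal-parallel} identifies the isogonal conjugate of $X$ in $ABCD$ as $X' = P_\infty Y \cap \mathcal C$, where $Y = PX \cap \mathcal C$. Both of these intersections only require $P, P_\infty, X \in \mathcal C$, each of which is now known to be intrinsic to $\mathcal C$. Hence $X'$ is determined by $\mathcal C$ and $X$ alone. The only real subtlety I expect to watch is ensuring that the shorthand $PX \cap \mathcal C$ and $P_\infty Y \cap \mathcal C$ is unambiguous in boundary cases --- for instance when $X \equiv P$, when $Y \equiv P_\infty$, or when the relevant line is tangent to $\mathcal C$ --- but the extended conventions from Definition 1.4 resolve each of these, and they themselves reference only the cubic $\mathcal C$ and its points.
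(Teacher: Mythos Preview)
Your proposal is correct and follows essentially the same route as the paper's own proof: invoke the intrinsic nature of $P$ (from the preceding uniqueness theorem) and of $P_\infty$ (from \Cref{truth}), then feed both into \Cref{isogonal-parallel} to conclude that $X'$ depends only on $\mathcal C$ and $X$. The only cosmetic difference is that the paper dispatches the degenerate cases $X\equiv P$ and $X\equiv P_\infty$ by hand (sending them to each other), whereas you appeal to the intersection conventions of Definition~1.4; either treatment is fine.
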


\begin{proof}
    Let $P$ be the spiral center of $\mathcal C$, and let $P_\infty$ be the point of infinity of $\mathcal C$. Consider any $X \in \mathcal C$. If $X \equiv P$, its isogonal conjugate is $P_\infty$, and vice versa. If $X$ is neither $P$ nor $P_\infty$, let $Y = PX \cap \mathcal C$ and $X' = P_\infty Y \cap \mathcal C$. Then by \Cref{isogonal-parallel}, $X'$ is the isogonal conjugate of $X$ in $ABCD$ no matter which $ABCD$ we choose. Since $P$ is fixed, $X'$ depends only on $X$, as desired.
\end{proof}

\begin{figure}[!htbp]\centering
    \includegraphics[width=300pt]{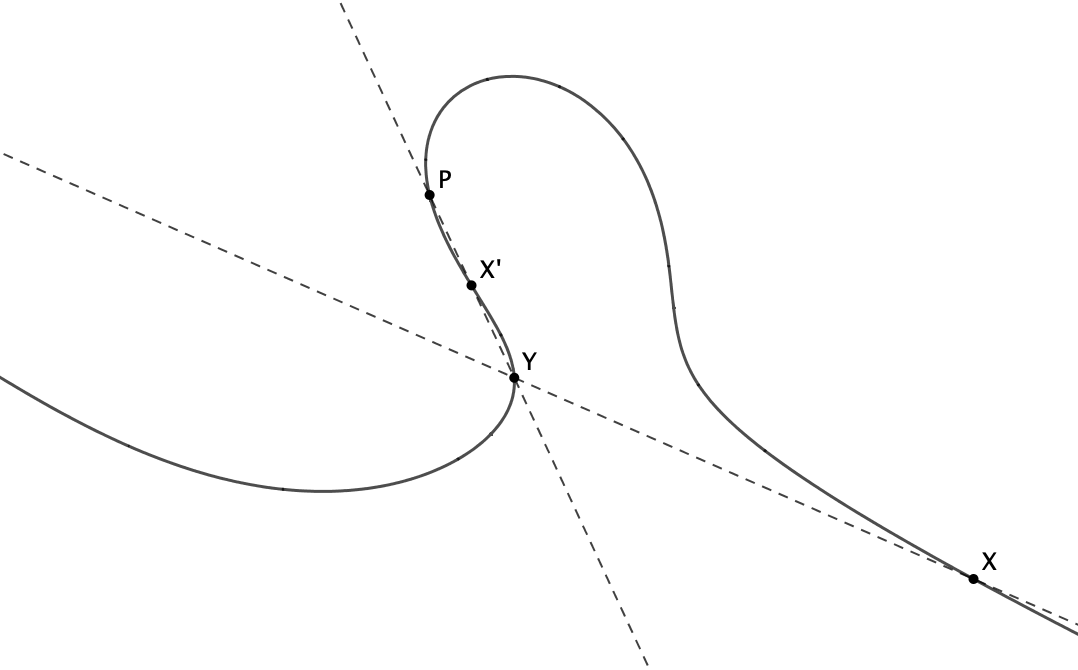}
    \caption{Construction of the Isogonal Conjugatate in a Cubic}
    \label{fig:6.1}
\end{figure}

Therefore, given any non-degenerate isogonal cubic $\mathcal C \in \mathbb R^2$ and any point $X \in \mathcal C$, the spiral center $P$ and the isogonal conjugate of $X$ with respect to $\mathcal C$ are well-defined. Thus, we may now revisit our constructions of intersections and tangents, this time with a general isogonal cubic.

\begin{theorem}[Tangents to the Isogonal Cubic]
    For non-singular $X \in \mathcal C$, let $X'$ be its isogonal conjugate. Let $\ell$ be the isogonal of $XX'$ wrt lines $XP, XP_\infty$. Then $\ell$ is tangent to $\mathcal C$ at $X$.
\end{theorem}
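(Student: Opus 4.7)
The plan is to reduce this statement to \Cref{tangent}, which already shows that the tangent to $\mathcal C$ at $X$ is the isogonal of $XX'$ inside the angle $\angle AXC$ for any base quadrilateral $ABCD$. The task is then to upgrade the pair $(XA, XC)$ to the pair $(XP, XP_\infty)$, which unlike $(XA, XC)$ is intrinsic to $\mathcal C$.

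First, I would invoke the hypothesis to fix some realization of $\mathcal C$ as the isogonal cubic of a quadrilateral $ABCD$; by the preceding Uniqueness of the Spiral Center theorem, its spiral center coincides with the $P$ specified in the statement, and by \Cref{truth} its sole real point at infinity is $P_\infty$. By \Cref{infinity}, the isogonal conjugate of $P$ in $ABCD$ is exactly $P_\infty$, so $(P, P_\infty)$ is a genuine pair of isogonal conjugates in $ABCD$.

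Next, because $X$ is excellent in $ABCD$ and $(P, P_\infty)$ are isogonal conjugates, \Cref{swallow-reversal} (applied with $P, P_\infty$ in the role of the isogonal pair and $X$ in the role of the excellent point) gives that $(XP, XP_\infty)$ and $(XA, XC)$ are isogonal — that is, these two pairs of lines share a common pair of angle bisectors at $X$. Since taking the isogonal of a line $XX'$ with respect to a pair of lines means reflecting $XX'$ over their shared angle bisectors, the isogonal of $XX'$ with respect to $(XP, XP_\infty)$ is identical to the isogonal of $XX'$ with respect to $(XA, XC)$. By \Cref{tangent}, the latter line is the tangent to $\mathcal C$ at $X$, which finishes the argument.

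There is no real obstacle in this proof; the entire content is in recognizing $(P, P_\infty)$ as a pair of isogonal conjugates in a background quadrilateral and then running \Cref{swallow-reversal}. The only thing to check carefully is that the argument is independent of which base quadrilateral $ABCD$ we picked, but this is exactly what the preceding uniqueness theorems and \Cref{truth} guarantee — $P$ and $P_\infty$ depend only on $\mathcal C$, so the resulting pair of angle bisectors at $X$, and hence the tangent, depends only on $\mathcal C$ as well.
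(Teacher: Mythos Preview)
Your proof is correct and is precisely the intended derivation: the paper does not spell out a proof here, but the placement after the uniqueness theorems and \Cref{tangent} makes clear that one is meant to pick a base quadrilateral, recognize $(P,P_\infty)$ as an isogonal pair via \Cref{infinity}, apply \Cref{swallow-reversal} to trade $(XA,XC)$ for $(XP,XP_\infty)$, and then invoke \Cref{tangent}. You have filled in exactly that argument.
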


\begin{theorem}[Line Intersections in the Isogonal Cubic]
    For distinct $X, Y \in \mathcal C$, let $\ell_X$ be the isogonal of $XY$ wrt lines $XP, XP_\infty$; define $\ell_Y$ analogously. Then $XY \cap \mathcal C$ is the isogonal conjugate of $\ell_X \cap \ell_Y$.
\end{theorem}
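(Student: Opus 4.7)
The plan is to reduce this to the earlier Line Intersection theorem of Section~4 by leveraging the uniqueness results just proved. First, invoke \Cref{class-real} to fix some quadrilateral $ABCD \subseteq \mathbb R^2$ whose isogonal cubic is $\mathcal C$. By \Cref{infinity}, $(P, P_\infty)$ is a pair of isogonal conjugates in $ABCD$, and of course $(A, C)$ is too. Applying \Cref{swallow-reversal} at the excellent point $X$ to these two pairs shows that $(XP, XP_\infty)$ and $(XA, XC)$ are isogonal, and hence share the same pair of angle bisectors. Consequently, reflecting a line through $X$ over the bisectors of $\angle PXP_\infty$ is the same operation as reflecting it over the bisectors of $\angle AXC$, so $\ell_X$ as defined in the statement is precisely the reflection of $XY$ over the bisectors of $\angle AXC$. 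The analogous identity holds for $\ell_Y$.

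Having reinterpreted $\ell_X$ and $\ell_Y$ in this classical language, the hypotheses of the Line Intersection theorem of Section~4 are exactly met, and that theorem immediately yields that $XY \cap \mathcal C$ equals the isogonal conjugate of $\ell_X \cap \ell_Y$ in $ABCD$. The two preceding uniqueness theorems guarantee that both the spiral center and the isogonal conjugation map do not depend on the choice of $ABCD$, so $\ell_X \cap \ell_Y$ is mapped by the intrinsic isogonal conjugation on $\mathcal C$ to the third intersection $XY \cap \mathcal C$, as claimed.

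The main conceptual point — really the only nontrivial one — is recognizing that the intrinsic construction ``isogonal wrt $XP, XP_\infty$'' agrees with the old construction ``isogonal wrt the bisectors of $\angle AXC$''; once this identification is made via \Cref{swallow-reversal}, no further geometry is required. The only bookkeeping obstacle is the degenerate case when $X \in \{P, P_\infty\}$ (or similarly $Y$), where one of the defining lines coincides with the other and $\ell_X$ must be interpreted as the tangent to $\mathcal C$ at $X$; by the Tangent theorem just above this matches the expected behavior, and the reduction to Section~4 still goes through.
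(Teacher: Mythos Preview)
Your argument is correct and is precisely the implicit reasoning the paper intends: the theorem is stated there without proof as a restatement of the Section~5 Line Intersection theorem, made intrinsic via the uniqueness results and the observation (from \Cref{swallow-reversal} applied to the isogonal pair $(P,P_\infty)$) that $(XP,XP_\infty)$ and $(XA,XC)$ share bisectors. One cosmetic nit: the Line Intersection theorem lives in Section~5, not Section~4.
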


\section{Algebraic Characterization in the Cartesian Plane}

To conclude the paper, we present a purely algebraic characterization of all possible isogonal cubics in $\mathbb R^2$ for the sake of completion.

\begin{theorem}
    A non-degenerate cubic $\mathcal C \in \mathbb R^2$ is an isogonal cubic of some quadrilateral $ABCD$ if and only if it has the form $f(x, y) = f(p, q)$, where
    \[f(x, y) = Ax^3 + Bx^2y + Axy^2 + By^3 + Cx^2 + Dxy + Ey^2 + Fx + Gy\]
    such that all coefficients are real and $(A, B) \ne (0, 0)$, and
    \[p = \frac{AE-AC-BD}{2(A^2+B^2)}, \quad\quad q = \frac{BC-AD-BE}{2(A^2+B^2)}.\]
    Furthermore, the spiral center of $\mathcal C$ is $(p, q)$, and the unique real asymptote of $\mathcal C$ is given by
    \[(A^3+AB^2)x + (A^2B+B^3)y + (A^2E-ABD+B^2C) = 0.\]
\end{theorem}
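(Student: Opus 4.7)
The plan is to reduce the three claims of the theorem to explicit gradient computations at the three distinguished points $I = (1{:}i{:}0)$, $J = (1{:}{-i}{:}0)$, and the unique real point at infinity of $\mathcal C$, then interpret the resulting algebra via \Cref{class-real} and the Uniqueness of the Spiral Center Theorem.

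First I would write an arbitrary non-degenerate real cubic, homogenize, and impose $I, J \in \mathcal C_0$. As already shown in part (b) of the proof of \Cref{class-real}, the reality of the coefficients forces the cubic part to be $(x^2+y^2)(Ax+By)$, so the affine equation reduces to $f(x,y) + H = 0$ for the stated $f$ and some real constant $H$; non-degeneracy of $\mathcal C$ forces $(A,B) \ne (0,0)$.

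Next, to identify the intersection of the tangents at $I$ and $J$, I would evaluate the gradient of the homogenized polynomial at $I$. Since $A+Bi \ne 0$, the common factor $2(A+Bi)$ can be divided out of the $x$- and $y$-coefficients, and after rationalizing the remaining $z$-coefficient the tangent at $I$ takes the clean form $(x+\alpha) + i(y+\beta) = 0$ for explicit reals $\alpha, \beta$ that are rational in $A, B, C, D, E$. By conjugate symmetry, the tangent at $J$ is $(x+\alpha) - i(y+\beta) = 0$, so the two tangents meet in $\mathbb R^2$ at $(-\alpha, -\beta)$, which one checks coincides with the stated $(p, q)$. By \Cref{class-real}(2), $\mathcal C$ is an isogonal cubic if and only if this intersection lies on $\mathcal C$; this forces $H = -f(p,q)$, yielding the equation $f(x,y) = f(p,q)$. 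The identification of $(p,q)$ as the spiral center is then immediate from the Uniqueness of the Spiral Center Theorem, which characterizes the spiral center as exactly the intersection of the tangents to $\mathcal C_0$ at $I$ and $J$.

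Finally, for the asymptote, \Cref{truth} gives that the unique real point at infinity of $\mathcal C$ is $P_\infty = (B{:}{-A}{:}0)$, the only real solution of $(x^2+y^2)(Ax+By) = 0$. Evaluating the gradient of the homogenized polynomial at $P_\infty$ directly yields the partial derivatives $A(A^2+B^2)$, $B(A^2+B^2)$, and $A^2E - ABD + B^2C$, producing exactly the stated asymptote equation. The main technical obstacle is purely bookkeeping: rationalizing the complex quotient $\tfrac{(C-E)+Di}{2(A+Bi)}$ so that its real and imaginary parts are precisely $-p$ and $-q$, and carefully tracking signs across the three gradient evaluations. There is no conceptual difficulty, because every projective or synthetic condition from the earlier sections translates almost mechanically into algebra once the cubic is written in the correct normal form.
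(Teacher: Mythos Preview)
Your proposal is correct and follows essentially the same route as the paper: homogenize, use part~(b) of \Cref{class-real} to normalize the cubic part to $(x^2+y^2)(Ax+By)$, compute the gradient at $I$, $J$, and $P_\infty$, and read off $(p,q)$ and the asymptote. The only cosmetic difference is that you rationalize the tangent at $I$ into the form $(x+\alpha)+i(y+\beta)=0$ and exploit conjugate symmetry, whereas the paper writes both tangents out and solves the $2\times 2$ linear system; your explicit appeal to the Uniqueness of the Spiral Center Theorem to justify calling $(p,q)$ the spiral center is, if anything, slightly cleaner than the paper's implicit use of it.
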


\begin{proof}
    Let the embedding $\mathcal C_0$ of $\mathcal C$ in $\mathbb{CP}^2$ have equation $g(x, y, z) = 0$, where
    \[g(x, y, z) = Ax^3 + Bx^2y + Axy^2 + By^3 + Cx^2z + Dxyz + Ey^2z + Fxz^2 + Gyz^2 + Hz^3\]
    where the equality of the coefficients of $x^3$ with $xy^2$ and $x^2y$ with $y^3$ is given by part (b) of \Cref{class-real}. Let $g$ denote the left-hand side of the above equation. We compute
    \[\frac{\partial g}{\partial x} = 3Ax^2 + 2Bxy + 2Fxz + Ay^2 + Dyz + Fz^2\]
    \[\frac{\partial g}{\partial y} = 3By^2 + 2Axy + 2Eyz + Bx^2 + Dxz + Gz^2\]
    \[\frac{\partial g}{\partial z} = 3Hz^2 + 2Fxz + 2Gyz + Cx^2 + Dxy + Ey^2\]
    Plugging in the partial derivatives for $(1: i: 0)$, the tangent to $\mathcal C_0$ at $(1: i: 0)$ has equation
    \[(2A+2Bi)x + (-2B+2Ai)y + (C+Di-E)z = 0\]
    and similarly the tangent to $\mathcal C_0$ at $(1: -i: 0)$ has equation
    \[(2A-2Bi)x + (-2B-2Ai)y + (C-Di-E)z = 0\]
    The spiral center $P$ of $\mathcal C$ is then given by the solution to these two equations. Solving yields
    \[(x: y: z) \quad = \quad \left(AE-AC-BD: BC-AD-BE: 2(A^2+B^2)\right)\]
    Since $A$ and $B$ are not both $0$, converting back to Cartesian coordinates implies that $P$ indeed has coordinates given by $(p, q)$. In Cartesian coordinates, the value
    \[Ax^3 + Bx^2y + Axy^2 + By^3 + Cx^2 + Dxy + Ey^2 + Fx + Gy\]
    must be a constant, particularly $-H$. Plugging in $(p, q)$ immediately gives the equation for $\mathcal C$ to be $f(x, y) = f(p, q)$ as desired.
    
    To determine the asymptote, we find that points of infinity on $\mathcal C_0$ are given by
    \[0 = Ax^3 + Bx^2y + Axy^2 + By^3 = (x^2+y^2)(Ax+By)\]
    so the real point of infinity is given by $P_\infty = (B: -A: 0)$. Plugging this into the equations for the partial derivatives yields that the tangent to $\mathcal C_0$ at $P_\infty$, and by extension the unique real asymptote of $\mathcal C$, indeed takes the above equation. This completes the proof.
\end{proof}

\section{Acknowledgements}

The author would like to acknowledge and thank to Anant Mudgal for providing some of the necessary theory in complex projective geometry, helping to formalize the analytic characterizations, and checking over the solutions. The author would also like to give special thanks to Michael Diao for checking the paper, suggesting additions to the content, and helping to format the document and diagrams.

\end{document}